\newtheorem{theorem}{Theorem}
\newtheorem{prop}{Proposition}[section]
\newtheorem{lemma}[prop]{Lemma}
\newtheorem{definition}[prop]{Definition}
\newtheorem{remark}[prop]{Remark}
\newcommand{\CC}{\mathbb{C}}
\newcommand{\RR}{\mathbb{R}}
\newcommand{\HH}{\mathbb{H}}
\newcommand{\ZZ}{\mathbb{Z}}
\newcommand{\NN}{\mathbb{N}}
\newcommand{\PP}{\mathbb{P}}
\newcommand{\del}{\partial}
\newcommand{\delbar}{\bar{\partial}}
\title{Limits of Convex Projective Surfaces and Finsler Metrics}
\author{Charlie Reid}
\date\today
\begin{document}
\maketitle

\begin{abstract}
We show that for certain sequences escaping to infinity in the $\operatorname{SL}_3\RR$ Hitchin component, growth rates of trace functions are described by natural Finsler metrics. More specifically, as the Labourie-Loftin cubic differential gets big, logarithms of trace functions are approximated by lengths in a Finsler metric which has triangular unit balls and is defined directly in terms of the cubic differential. This is equivalent to a conjecture of Loftin from 2006 \cite{loftin_flat_2007} which has recently been proven by Loftin, Tambourelli, and Wolf \cite{loftin_limits_2022}, though phrasing the result in terms of Finsler metrics is new and leads to stronger results with simpler proofs. From our perspective, the result is a corollary of a more local theorem which may have other applications. The key ingredient of the proof is another asymmetric Finsler metric, defined on any convex projective surface, recently defined by Danciger and Stecker, in which lengths of loops are logarithms of eigenvalues. We imitate work of Nie \cite{nie_limit_2022} to show that, as the cubic differential gets big, Danciger and Stecker's metric converges to our Finsler metric with triangular unit balls. While \cite{loftin_limits_2022} addresses cubic differential rays, our methods also address sequences of representations which are asymptotic to cubic differential rays, giving us more insight into natural compactifications of the moduli space of convex projective surfaces.
\end{abstract}

\section{Introduction}
A projective structure on a closed manifold $M$ is an atlas of charts valued in $\RR \PP^n$ with transition functions in $\operatorname{PGL}_{n+1}\RR$. Such a structure gives rise to a developing map $\tilde{M}\to \RR \PP^n$, and a holonomy representation $\pi_1(M)\to \operatorname{PGL}_{n+1}\RR$. A projective structure is called convex if the developing map is a homeomorphism onto a properly convex domain in $\RR \PP^n$. Classification of convex projective manifolds in general is a largely open subject, but the case of surfaces is understood: the space of convex projective structures on an oriented closed surface of genus $g$, which we will denote $\operatorname{Conv}(S)$, is a ball of dimension $16g-16$ \cite{choi_convex_1993}. Taking holonomy representations identifies $\operatorname{Conv}(S)$ with the Hitchin component of the space of representations $\operatorname{Rep}(\pi_1 (S),\operatorname{SL}_3\RR)$. This is analogous to the fact that the space $\operatorname{Teich}(S)$ of hyperbolic structures on $S$, is a ball of dimension $6g-6$, and is in bijection with a component of $\operatorname{Rep}(\pi_1 (S),\operatorname{PSL}_2\RR)$. In fact the symmetric square $\operatorname{PSL}_2\RR\to \operatorname{SL}_3\RR$ embeds $\operatorname{Teich}(S)$ into $\operatorname{Conv}(S)$ as a submanifold, which we call the Fuchsian locus.

This paper is largely motivated by the aspiration to extend Thurston's compactification of $\operatorname{Teich}(S)$ by measured foliations to a compactification of $\operatorname{Conv}(S)$. Features of Thurston's compactification which are desiderata for the convex projective case include the following.
\begin{itemize}
\item Thurston's compactification is homeomorphic to a closed ball.
\item A boundary point records ratios of growth rates of lengths of closed curves.
\item The boundary points parametrize some kind of geometric structures on $S$.
\end{itemize}
In this paper we show that certain sequences in $\operatorname{Conv}(S)$ converge to the following class of Finsler metrics in a natural way. 

\begin{definition}
Let $\mu$ be a cubic differential on a Riemann surface $C$; that is, a holomorphic section of $(T^*C)^{\otimes 3}$. We define $F^\Delta_\mu$ to be the maximum of twice the real parts of the cube roots of $\mu$. 
\[F^\Delta_\mu(v) := \max_{\{\alpha\in T_x^*C:\alpha^3=\mu_x\}} 2 Re(\alpha(v))\]
Here $x\in C$ is a point, and $v\in T_x C$ is a tangent vector.
\end{definition}

We conjecture that these Finsler metrics, considered up to scaling, comprise an open dense subset of a compactification which extends Thurston's compactification of Teichm\"uller space, and satisfies the above three desiterata. The sequences we consider are ``orthogonal" to $\operatorname{Teich}(S)$ in the following sense.

In the early 2000's Labourie \cite{labourie_flat_2006} and Loftin \cite{loftin_affine_2001} found a beautiful, but non-explicit parametrization of the space of convex projective structures on a closed surface $S$ by pairs $(J,\mu)$ where $J$ is a complex structure, and $\mu$ is a holomorphic cubic differential. The Fuchsian locus corresponds to $\mu=0$. We will consider sequences $(J_i,\mu_i)$ where $J_i$ converges, and $\mu_i$ diverges.

 A natural measurement one can make on a convex projective structure is the logarithm of the top eigenvalue of the action of a group element $\gamma\in \pi_1(S)$.
 \[\log(\lambda_1(\rho(\gamma)))\]
 Here, $\rho:\pi_1(S)\to \operatorname{SL}_3\RR$ is the holonomy representation. If $\rho$ is Fuchsian, then this is the hyperbolic length of the geodesic representing the conjugacy class of $\gamma$, so we think of it as a notion of geodesic length for closed curves in convex projective manifolds. We will call $\log(\lambda_1(\rho(\gamma)))$ the asymmetric length, because it is a function of oriented loops, and to distinguish it from the more commonly used Hilbert length: 
$\log(\lambda_1(\rho(\gamma))/\lambda_3(\rho(\gamma)))$.
Hilbert length is the symmetrization of the asymmetric length.

\begin{theorem}
\label{evlimit}
Let $\mu_i$ be a sequence of cubic differentials on a smooth oriented surface $S$ of genus at least $2$, each holomorphic with respect to a complex structure $J_i$, such that $a_i^3\mu_i$ converges uniformally to $\mu$, for some sequence of positive real numbers $a_i$ tending to $0$. (It follows that $J_i$ converge to $J$.)  Let $\gamma\in \pi_1(S)$. Let $F^{\Delta}_\mu(\gamma)$ denote the infimal length of loops representing $\gamma$ in the Finsler metric $F_\mu^{\Delta}$.
\[\lim_{i \to \infty} a_i \log(\lambda_1(\rho(\gamma))) = F^{\Delta}_\mu(\gamma)\]
\end{theorem}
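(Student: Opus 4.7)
The strategy is to realize $\log \lambda_1(\rho(\gamma))$ as the infimal length of loops in the Danciger--Stecker asymmetric Finsler metric $F^{DS}_\rho$ attached to the convex projective structure $\rho$, thereby reducing the theorem to a statement of convergence of Finsler metrics on $S$. Since $F^\Delta_{c^3\mu} = c\, F^\Delta_\mu$ for $c > 0$, the hypothesis $a_i^3\mu_i \to \mu$ amounts to asking that the rescaled metrics $a_i F^{DS}_{J_i,\mu_i}$ converge to $F^\Delta_\mu$. Once this convergence is established with enough uniformity, convergence of infimal lengths in each homotopy class follows by a standard compactness argument for quasi-minimizing loops.

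The heart of the proof is the pointwise convergence $a_i F^{DS}_{J_i,\mu_i}(v)\to F^\Delta_\mu(v)$ at each tangent vector $v$ lying over a point $x$ with $\mu(x)\neq 0$. Following Nie, I would pass to normal coordinates around $x$ and rescale by $a_i$; the Labourie--Loftin affine sphere corresponding to $(J_i,\mu_i)$ is governed by Wang's equation, whose forcing term $|\mu_i|^2$ blows up precisely at the right rate to produce a well-defined blow-up limit. Smooth compactness for Wang's equation combined with Cheng--Yau uniqueness for complete flat affine spheres identifies the limit with the model affine sphere cut out by three planes determined by the cube roots of $\mu_x$; its Danciger--Stecker norm is by direct computation $F^\Delta_\mu$. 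This upgrades to $C^\infty_{\mathrm{loc}}$-convergence on $\{\mu\neq 0\}$.

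To pass from infinitesimal to loop-level convergence I would prove the two inequalities separately. The upper bound is the easier direction: fix a $F^\Delta_\mu$-quasi-minimizing piecewise-smooth representative of $\gamma$, perturb it to avoid the zero set $Z(\mu)$, and integrate along it using the uniform convergence on the compact set that it sweeps out. The lower bound is subtler because $F^\Delta_\mu$ vanishes on $Z(\mu)$, so a priori a minimizing sequence for $F^{DS}_{J_i,\mu_i}$ could concentrate there. I would combat this by establishing a uniform inequality $a_i F^{DS}_{J_i,\mu_i}(v)\geq F^\Delta_\mu(v) - \varepsilon$ outside a fixed small neighborhood $U_\varepsilon$ of $Z(\mu)$, and showing that the $a_i F^{DS}_{J_i,\mu_i}$-length of any arc crossing $U_\varepsilon$ is bounded below by a quantity tending to $0$ with $\varepsilon$; combined, these force any minimizing sequence to behave as if it were competing against $F^\Delta_\mu$ on the complement.

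The main obstacle I anticipate is the analysis near $Z(\mu)$, where the triangular model and Wang's equation both degenerate. Nie's framework handles the analogous degeneration in the $\operatorname{SL}_n\RR$ Hitchin setting through a two-scale analysis: on an inner scale at each zero one exploits explicit asymptotics of solutions to Wang's equation with vanishing forcing, and on the outer scale one uses the clean flat-model blow-up limit described above. Reproducing this matching carefully in the $\operatorname{SL}_3\RR$ setting, and verifying that the small length contributions from inner regions scale correctly with $a_i$, is the technical step I expect to require the most care.
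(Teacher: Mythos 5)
Your proposal follows the same architecture as the paper: interpret $\log\lambda_1(\rho(\gamma))$ as the $F^{DS}$-length spectrum, prove that $a_iF^{DS}_{\mu_i}\to F^\Delta_\mu$ uniformly on compact sets away from $Z(\mu)$, and then pass from metric convergence to convergence of infimal loop lengths despite the degeneracy of $F^\Delta_\mu$ at the zeros. The differences are in the middle step and in what you expect to happen at the zeros. For the metric convergence, you propose a qualitative blow-up/compactness argument for Wang's equation plus Cheng--Yau uniqueness to identify the limit with the \c{T}i\c{t}eica model; the paper instead proves a quantitative estimate $|\log(F^{DS}_\mu/F^\Delta_\mu)|<\epsilon(r)$, uniform over \emph{all} closed surfaces with cubic differential and depending only on the $h_\mu$-distance $r$ to the nearest zero, built from Nie's $C^0$ bound on the Wang solution, an interior gradient estimate, Gronwall applied to the two developing maps on a ball of radius $d$, and --- the one ingredient your sketch has no counterpart for --- a projective duality trick $F^{DS}_\mu=(F^{DS}_{-\mu})^{*_{2g}}$ that converts the upper bounds coming from truncated affine spheres into the needed lower bounds (truncation alone only controls the unit ball from one side; your appeal to completeness/uniqueness of the limit affine sphere plays the analogous role). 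Your compactness route should work for the fixed sequence $(J_i,\mu_i)$ at hand, but the paper's estimate is stronger and is what makes the deduction clean. Second, the ``two-scale matching near $Z(\mu)$'' you anticipate as the hardest step is deliberately avoided in the paper: Section 5 proves a soft continuity statement for length functions of Finsler metrics with \emph{finite} degeneracy locus, requiring only uniform convergence away from the zeros, so no inner-scale analysis of Wang's equation at a zero is ever performed. Finally, in your lower-bound sketch the key bookkeeping point is to bound the \emph{number} of excursions a near-minimizer makes into the neighborhoods of the zeros (the paper gets this from $F(\gamma)\geq N d_{\min}$ in the universal cover, then reroutes each excursion through the zero at cost $2\delta$ apiece); a lower bound on the length of each crossing that merely ``tends to $0$ with $\varepsilon$'' does not by itself close this step --- you need the crossing count times the rerouting cost to vanish, which requires choosing the scales compatibly.
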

Part of the appeal of theorem \ref{evlimit} is that the left hand side is complicated, necessitating both solution of a PDE and an ODE to compute directly from $\mu_i$, while the right hand side only involves integrals of cube roots of $\mu$. The cubic differential $\mu$ is equivalent to a $1/3$-translation structure on $S$ in which $F^\Delta_\mu$ geodesics can always be straightened to be concatenations of straight line segments which have angle at least $\pi$ on either side at zeros. To compute $F^{\Delta}_\mu(\gamma)$, it suffices to find such a geodesic representative and add up the $F^\Delta_\mu$ lengths of the constituent segments. Theorem \ref{evlimit} was proved in \cite{loftin_limits_2022} for the case of cubic differential rays $\mu_i = \mu/a_i^3$. We are not sure if one can directly deduce Theorem \ref{evlimit} from \cite{loftin_limits_2022}, but in any case, theorem \ref{evlimit} is a useful improvement.
 
 Our main new result is a more local version of this theorem. Danciger and Stecker have recently discovered an asymmetric Finsler metric $F^{DS}$, defined on any closed convex projective manifold, whose length function is the asymmetric length function.
 \[F^{DS}(\gamma) = \log(\lambda_1(\rho(\gamma)))\]
  We call it the domain shape metric because its unit balls are all projectively equivalent to the developing image of the convex projective structure. Our main theorem says that when $\mu$ is large, the domain shape metric for the projective structure on $S$ specified by $(J,\mu)$, looks roughly like a much simpler Finsler metric $F^\Delta_\mu$.

\begin{theorem}
\label{metriclimit}
If $S, \mu_i$, $a_i$ and $\mu$ are as in theorem \ref{evlimit}, then $a_i F^{DS}_{\mu_i}$ converges uniformally to $F^{\Delta}_{\mu}$ on any compact set in the complement of the zeros of $\mu$.
\end{theorem}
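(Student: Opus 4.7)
My plan is to work locally near a point $p \in S$ at which $\mu_p \neq 0$, and to exploit the Labourie-Loftin parametrization of convex projective structures by hyperbolic affine spheres in $\RR^3$. The Danciger-Stecker norm on $T_p S$ is determined by the shape of the developing image of the affine sphere viewed from $p$, so uniform convergence of $a_i F^{DS}_{\mu_i}$ to $F^{\Delta}_\mu$ on compact subsets of $TS$ away from the zeros of $\mu$ is equivalent to uniform Hausdorff convergence of the corresponding rescaled unit balls in $T_p S$ to the triangular unit balls of $F^{\Delta}_\mu$. I will establish the latter by rescaling the affine sphere near $p$ and extracting a limiting geometry.

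In a local holomorphic coordinate $z$ centered at $p$ write $\mu_i = g_i\, dz^3$, and let the Blaschke metric be $h_i = e^{2 u_i}\,|dz|^2$. The Labourie-Loftin correspondence says that $u_i$ satisfies Wang's equation, which in this normalization takes the form
\[
2\,\partial_z \partial_{\bar z}\, u_i \;=\; |g_i|^2\, e^{-4 u_i} \;-\; e^{2 u_i}.
\]
Set $w = z/a_i$ and define the rescaled function $\tilde u_i(w) := u_i(a_i w) + \log a_i$. A direct calculation shows that Wang's equation is invariant under this change of variables, with the cubic differential coefficient becoming $\tilde g_i(w) := a_i^3 g_i(a_i w)$. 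By hypothesis $a_i^3 g_i \to g$ uniformly, so $\tilde g_i$ converges uniformly on compact subsets of the $w$-plane to the constant $g(0)$. Once uniform $C^0_{\mathrm{loc}}$ bounds on $\tilde u_i$ are in hand, elliptic bootstrap yields $C^\infty_{\mathrm{loc}}$ convergence of a subsequence to an entire solution $\tilde u_\infty$ on $\CC$ with constant cubic differential $g(0)\, dw^3$. Cheng-Yau's uniqueness theorem for hyperbolic affine spheres identifies $\tilde u_\infty$ with the explicit Tzitzeica (flat) solution, whose associated affine sphere in $\RR^3$ projects to the interior of a triangle in $\RR\PP^2$.

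The $C^\infty_{\mathrm{loc}}$ convergence of Blaschke metrics lifts to uniform convergence, on compact subsets of the $w$-plane, of the affine sphere immersions into $\RR^3$, and hence of the developing maps of the projective structures viewed from $p$. Consequently the unit ball of $F^{DS}_{\mu_i}$ at $p$, transferred to the $w$-tangent space via the identification $v_z = a_i v_w$, converges in Hausdorff distance to the projective triangle associated with Tzitzeica's solution for constant cubic differential $g(0)\, dw^3$. A short computation identifies this triangle with the unit ball $\{v : 2\,\mathrm{Re}(\alpha(v)) \le 1 \text{ for every cube root } \alpha \text{ of } \mu_p\}$ of $F^{\Delta}_\mu$ at $p$, and the factor $a_i$ relating the $v_z$ and $v_w$ parametrizations is precisely the rescaling factor in front of $F^{DS}_{\mu_i}$ in Theorem \ref{metriclimit}. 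Continuity in $p$, combined with a standard covering argument, upgrades pointwise convergence to uniform convergence on compact subsets of $TS$ away from the zeros of $\mu$.

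The main obstacle is the a priori $C^0_{\mathrm{loc}}$ bound on $\tilde u_i$: note that $u_i$ itself must blow up (the expected value $\tilde u_i(0) \to \tfrac{1}{3}\log|\mu_p|$ forces $u_i(0) \sim -\log a_i$), so only the rescaled function can be controlled. Without any global $C^0$ bound to fall back on, one must produce local upper and lower barriers built from the explicit Tzitzeica solution and apply the maximum principle to the sign-definite structure of Wang's equation. This is the technical heart of Nie's limit theorem in \cite{nie_limit_2022} and of the arguments in \cite{loftin_limits_2022}, and I expect to import those estimates with minor modification. The genuinely new ingredient, compared to the ray case of \cite{loftin_limits_2022}, is passing from a single cubic differential to a convergent family $a_i^3 \mu_i \to \mu$; but since the barrier bounds depend only on uniform $C^0$ control of the cubic differential on compact sets, which is given, this extension should be mild.
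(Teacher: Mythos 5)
Your overall strategy---zoom in at scale $a_i$ near a point where $\mu\neq 0$, use decay estimates for Wang's equation to compare with the constant-coefficient (Tzitzeica) solution, and read off the unit ball of $F^{DS}$ from the limiting affine sphere---is the same skeleton as the paper's, which proves a quantitative single-surface estimate $|\log(F^{DS}_\mu/F^\Delta_\mu)|<\epsilon(r)$ in terms of the flat distance $r$ to the zeros and then deduces the theorem. The substantive difference is that you argue by compactness and uniqueness (subsequential $C^\infty_{\mathrm{loc}}$ limits plus Cheng--Yau), whereas the paper runs Gronwall's inequality on the difference of the two flat connections to get explicit bounds. Either can work, but your write-up has one genuine gap and one glossed-over point.

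The gap: from $C^\infty_{\mathrm{loc}}$ convergence of the affine sphere immersions on compact subsets of the rescaled coordinate you conclude that the unit ball of $F^{DS}_{\mu_i}$ at $p$ converges in Hausdorff distance to the triangle. But the unit ball is the antipodal image of the intersection of the tangent plane at $\tilde p$ with the cone over the \emph{entire} developing image $\Omega_i$, and control of the immersion on a ball of radius $d$ around $\tilde p$ only shows that the unit ball \emph{contains} a set converging (as $d\to\infty$) to the triangle, i.e.\ it only gives the upper bound $F^{DS}\leq F^{DS,d}\approx F^{\Delta,d}$. Nothing in your argument rules out that the far part of the affine sphere contributes points of $\Omega_i$ well outside the triangle, which would make $F^{DS}$ much smaller than $F^{\Delta}$. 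The paper closes this with a projective duality argument: the dual affine sphere corresponds to $-\mu$, and dualizing with respect to twice the Blaschke metric converts the upper bound for $-\mu$ into the missing lower bound $F^{DS}_\mu\geq (F^{DS,d}_{-\mu})^{*_{2g}}$, together with the identity $(F^{\Delta}_{-\mu})^{*_{2h}}=F^{\Delta}_\mu$. You need this (or an equivalent outer estimate on $\Omega_i$) for the Hausdorff convergence you assert. Separately, your final step ``continuity in $p$ plus a covering argument'' does not by itself upgrade pointwise to uniform convergence---this is exactly why the paper cannot simply cite Nie's pointwise result. The standard fix in your framework is a contradiction argument with blow-ups at \emph{moving} basepoints $p_i\to p_\infty$ in the compact set (for which your hypotheses still give $\tilde g_i\to g(p_\infty)$); say this explicitly, or prove a quantitative estimate depending only on the flat distance to the zeros as the paper does. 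Also note that Hausdorff closeness of unit balls only controls the ratio of the norms given a uniform inradius at the origin; this is easy here but should be recorded (the paper's Lemma \ref{hausdorff-radial}).
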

Pointwise convergence would follow from \cite{nie_limit_2022} which describes the Gromov-Hausdorff limit of a sequence of pointed convex domains coming from a sequence of pointed Riemann surfaces with cubic differentials tending to infinity, but we will need uniformity to deduce theorem $\ref{evlimit}$ from theorem $\ref{metriclimit}$. To get uniform convergence away from zeros, we will retrace the steps of \cite{nie_limit_2022}, with a slightly different setup, and make sure things work uniformly. In section 5 we show that uniform convergence away from zeros is sufficient for deducing convergence of length functions.

\subsection{What theorem \ref{evlimit} says about compactification}
Let $\mathcal{L}(S)$ denote the set of homotopy classes of closed loops in $S$. An attractive way to define a compactification of $\operatorname{Conv}(S)$, which goes by various names, including Morgan-Shalen compactification, tropical compactification, and spectral radius compactification, is to embed it into $\PP(\RR^{\mathcal{L}(S)})$ via taking projectivized marked asymmetric length spectrum, then take the closure. The map $\operatorname{Conv}(S)\to \PP(\RR^{\mathcal{L}(S)})$ is an embedding, but the topology of the closure is still unknown. Part of the difficulty is that we still don't have a good understanding of what geometric structures on $S$ the tropical boundary points might be parametrizing. Theorem \ref{evlimit} takes a step towards answering this last question.

The Labourie-Loftin parametrization identifies $\operatorname{Conv}(S)$ with a vector bundle over Teichmuller space $Q(S)$ which we can compactify in the fiber directions by adding in a point for each $\RR_+$ orbit. Let $\bar{Q}(S)$ denote this radial partial compactification. Theorem \ref{evlimit} implies that the projectivized marked length spectrum map $\operatorname{Conv}(S)\to \PP(\RR^{\mathcal{L}(S)})$ extends continuously to $\bar{Q}(S)$, and the boundary map is given by the projectivized length spectrum of $F^\Delta_{\mu}$.
\begin{center}
\includegraphics[width=8cm]{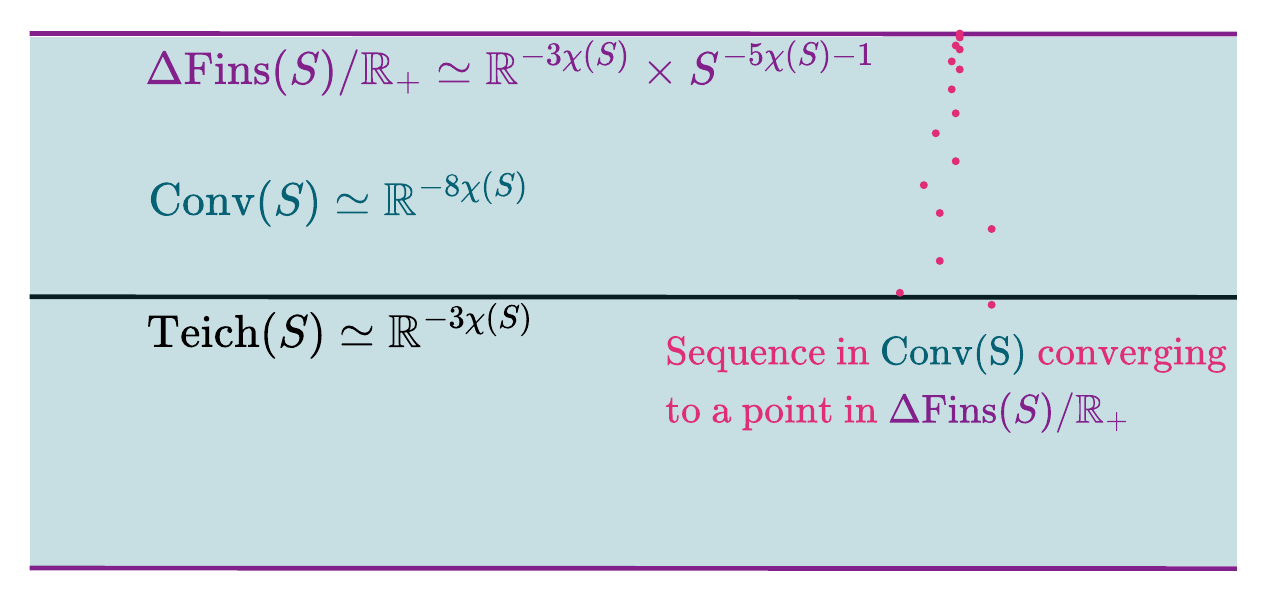}
\end{center}
 Let $\Delta Fins(S)$ denote the set of triangular Finsler metrics $\{F^\Delta_\mu:\mu\in Q(S)\backslash\underline{0}\}$. We conjecture that $\Delta Fins(S)/\RR_+$, injects into $\PP(\RR^{\mathcal{L}(S)})$ and comprises an open dense subset of the boundary of $\operatorname{Conv}(S)$.

\subsection{Relation to other work, and future directions}
This work fits into a few different bigger stories. Firstly, Gaiotto Moore and Neitzke \cite{gaiotto_wall-crossing_2013} have a conjecture for the asymptotics of $tr(\rho(\gamma))$ along rays of Higgs bundles $\{(E,R\phi):R\in \RR_+\}$ under the nonabelian hodge correspondence. Our paper, and \cite{loftin_limits_2022}, can be seen as proving the leading order part of the conjecture for the subspace of the $\operatorname{SL}_3$ Hitchin section where the quadratic differential is zero. The methods of \cite{loftin_limits_2022} seem like they might work in higher rank, while our methods seem very specific to $\operatorname{SL}_3\RR$. On the other hand, our method is less similar to ideas of \cite{gaiotto_wall-crossing_2013} in that Stokes lines don't make an appearence, so it might bring a new perspective to the conjectures.

Our Theorem \ref{metriclimit} is similar in flavor to work of Oyang-Tambourelli \cite{ouyang_limits_2021} where it is shown that when $\mu$ is big, the Blaschke metric is close to the singular flat Riemannian metric defined by $\mu$. They are also able to fully understand the closure of the space of projectivised length spectra of Blaschke metrics as mixed structures on $S$, which consist of a singular flat metric on part of the surface, and a measured lamination on the rest.  We hope that the tropical compactification of $\operatorname{Conv}(S)$ has a similar description, but with singular flat triangular Finsler metrics instead of singular flat Riemannian metrics. In \cite{ouyang_limits_2021}, it is shown that singular flat metrics comprise an open dense subset of mixed structures, motivating our analogous conjecture about singular, flat, triangular Finsler metrics.

In the case of punctured surfaces, Fock and Goncharov \cite{fock_moduli_2006} have explained how to compactify Hitchin components by the projectivization of the tropical points of the character variety. In the $\operatorname{SL}_2$ case, these tropical points parametrize measured laminations, and the integral tropical points correspond to integral laminations. This has led many people to investigate what kind of objects the Fock-Goncharov tropical points are parametrizing in higher rank. 

In \cite{parreau_invariant_2015}, Parreau shows that a certain cone of FG tropical points is parametrizing strucutres on the surface which are part $1/3$ translation structure, and part tree. She shows that asymptotics of Jordan projections of group elements are encoded in a Weyl cone valued metric. For the $1/3$ translation structure part, this is exactly equivalent to the way we encode asymptotics of top eigenvalues with the Finsler metric $F_\mu$. The present paper can be seen as accomplishing, for closed surfaces, something quite analogous to Parreau's work on punctured surfaces. In neither case are all tropical points covered, but the the limitations are not quite the same. Parreau has to restrict to a certain cone of tropical points which depends on the chosen triangulation, but this cone includes some cases which exhibit tree behavior. We, on the other hand, study exactly the cases which do not exhibit tree behavior.

Douglous and Sun \cite{douglas_tropical_2021} have developed a different perspective on the Fock-Goncharov integral $\operatorname{SL}_3$ tropical points, showing that they parametrize certain bipartite trivalent graphs introduced by Sikora-Westbury \cite{sikora_confluence_2007} called non-elliptic webs. Webs and $1/3$ translation structures are related by a simple geometric construction (which has been contemplated by J. Farre. myself, and possibly other people.) Let $S$ be a closed surface. Suppose we have a filling non-elliptic web: a trivalent bipartite graph $W$ embedded in $S$ whose complementary regions are all disks with at least six sides. We can replace each black vertex of $W$ with the equilateral triangle $\operatorname{Conv}(0,1,1/2+i\sqrt{3}/2)\subset \CC$ with cubic differential $dz^3$, and each white vertex with $\operatorname{Conv}(0,1,1/2-i\sqrt{3}/2)\subset \CC$, then glue these triangles according to the edges of the web. This construction produces a surface with cubic differential, which is identified with $S$ up to isotopy. In the other direction, a cubic differential on $S$ with integral periods is the same as a singular $1/3$ translation structure with holonomy valued in $\ZZ/3 \ltimes \ZZ^2\subset \ZZ/3 \ltimes \RR^2$. One can take the preimage of a standard hexagonal web on $\RR^2$ via the developing map to get a filling non-elliptic web on $S$. Below is an example of a web on a genus 2 surface specifying a $1/3$ translation structure glued from $16$ equilateral triangles.
\begin{center}
	\includegraphics[width=8cm]{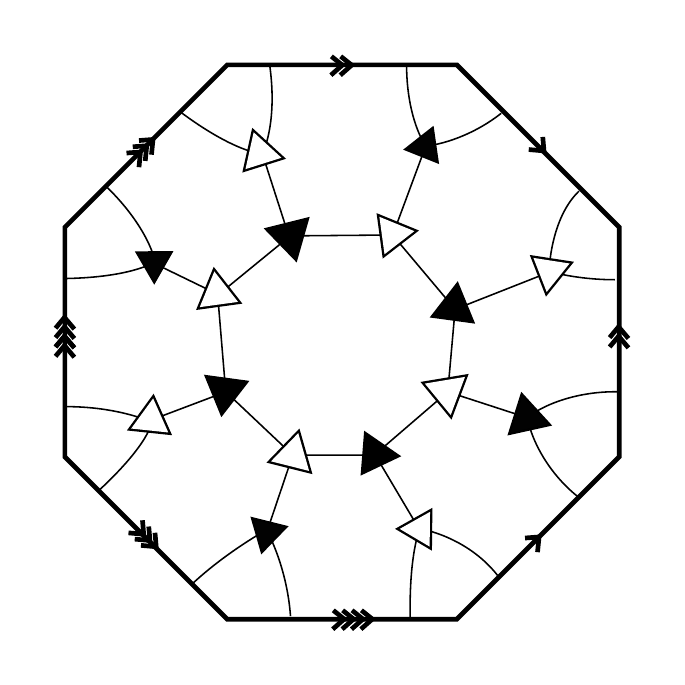}
\end{center}
Using this correspondence, we get a map from filling non-elliptic webs to integral tropical points of $Rep(\pi_1S, \operatorname{SL}_3\RR)$. It would great to extend the results of this paper to punctured surfaces so that they could be compared with work of Parreau, Douglas-Sun, and Fock-Goncharov.

A final, much more wide open, future direction is to use ideas of this paper to understand degenerations of higher dimensional convex projective manifolds. As we will see in the next section, the domain shape metric is defined in any dimension. It is conceivable that one could make sense of limits of domain shape metrics in higher dimensions, and relate these to tropical points of the relevant character varieties. There are various instances throughout geometry where studying more degenerate, combinatorial versions of a class of objects is the key to understanding many important things. Maybe convex projective geometry will be another example of this trend.

\subsection{Structure of the paper}
In Section 2 we define domain shape metrics and prove their important properties. In section 3 we review the Labourie-Loftin correspondence, mostly to recall formulas and set notation. In section 4, the technical heart of the paper, we prove theorem \ref{metriclimit}. In section 5 we prove that length functions of Finsler metrics are sufficiently continuous to deduce theorem \ref{evlimit} from theorem \ref{metriclimit}. In section 6 we explain how to apply theorem \ref{evlimit} to triangle reflection groups, and present some numerical computations. Some readers may prefer to look at section 6 first.

\subsection{Acknowledgements}
This paper would not exist without many discussions with J. Danciger and F. Stecker. In particular, they told me about domain shape metrics, and F. Stecker did various computer experiments giving evidence for Theorem \ref{evlimit}. This research was supported in part by NSF grant DMS-1937215, and NSF grant DMS-1945493. 

\section{Domain shape Finsler metrics}
In this section, we construct a Finsler metric $F^{DS}$ on any convex projective manifold (of any dimension) whose geodesics are projective lines, and whose length spectrum is the assymmetric length spectrum. The construction depends on a lift of the domain in $\mathbb{RP}^n$ to a convex hypersurface in $\RR^{n+1}$. After this section we will specialize to the case where we use the affine sphere as our choice of lift. This metric, and its main properties, were shown to me by Danciger and Stecker, but have not yet appeared in the literature. 

\begin{definition}
Let $v$ be a tangent vector to a point $x$ in a properly convex domain $\Omega\subset \RR \PP^n$. Let $\beta$ be a linear functional defining a supporting hyperplane at the point where the ray starting at $x$ and tangent to $-v$ intersects $\del \Omega$. Let $S\subset \RR^{n+1}$ be a convex, differentiable lift, which is asymptotic to the cone over $\Omega$ in the sense that that the line going through two points in $S$ is never in $\bar{\Omega}$. Let $\tilde{x}$ and $\tilde{v}$ be the lifts of $x$ and $v$ to $S$. 
\[F^{DS}(x,v):= \frac{\beta(\tilde{v})}{\beta(\tilde{x})}\]
We call $F^{DS}$ the domain shape metric of $\Omega$ for the hypersurface $S$.
\end{definition}

The name is justified by the fact that each unit ball of $F^{DS}$ is projectively equivalent to $\Omega$. More specifically, the unit ball at $x\in \Omega$ is the antipodal image of the intersection of the tangent space of $S$ at $\tilde{x}$ with the cone over $\Omega$.

\begin{center}
\includegraphics[width=14cm]{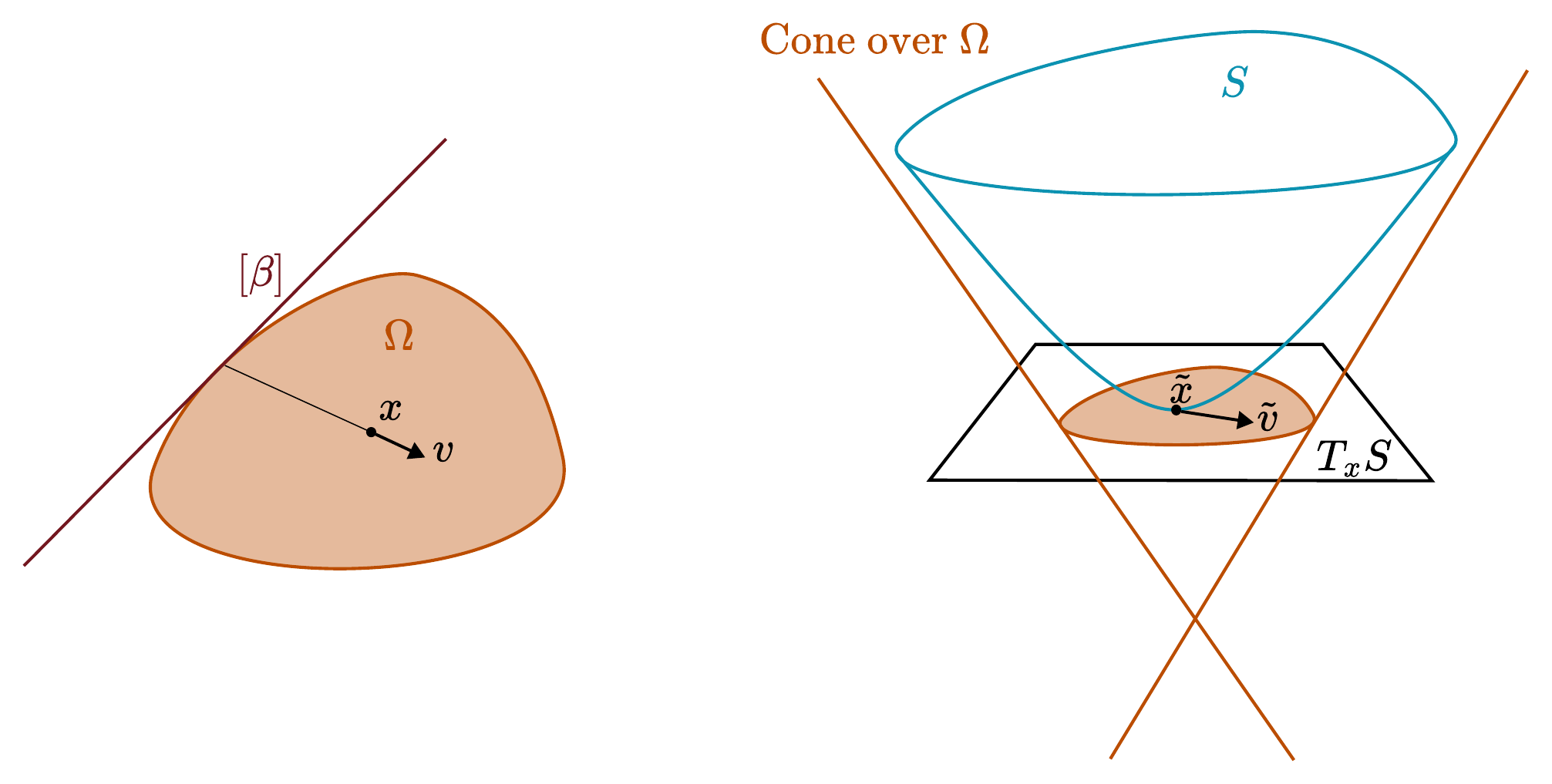}
\end{center}

\begin{remark}
Domain shape metrics are a simple generalization of Funk metrics \cite{funk_uber_1930}, which have been studied since Funk introduced them in 1929, because their geodesics are straight lines. An interpretation of Hilbert's 4th problem is to classify Finsler metrics on euclidean space whose geodesics are straight lines, so there has been interest in such metics for a long time.
\end{remark}

It turns out that $F^{DS}$ integrates to the asymmetric path metric $d^{DS}$ which we now define.

\begin{definition} Let $\Omega$ be a convex domain in $\RR \PP^n$, and let $S$ be a lift of $\Omega$ to $\RR^{n+1}$ which is convex, such the line going through two points of $S$ is never in $\bar{\Omega}$. We define a metric on $\Omega$ as follows. Let $x,y\in \Omega$ be distinct points, and let $\tilde{x},\tilde{y}\in S$ be their lifts. Let $p$ be the point where the projective line starting at $y$ and passing through $x$ first hits the boundary of $\Omega$. Let $[\beta]$ be a supporting hyperplane to $\Omega$ at $p$, defined by a linear functional $\beta$. 
\[d^{DS}(x,y) := \log\frac{\beta(\tilde{y})}{\beta(\tilde{x})}\]
\end{definition}
\begin{center}
\includegraphics[width=8cm]{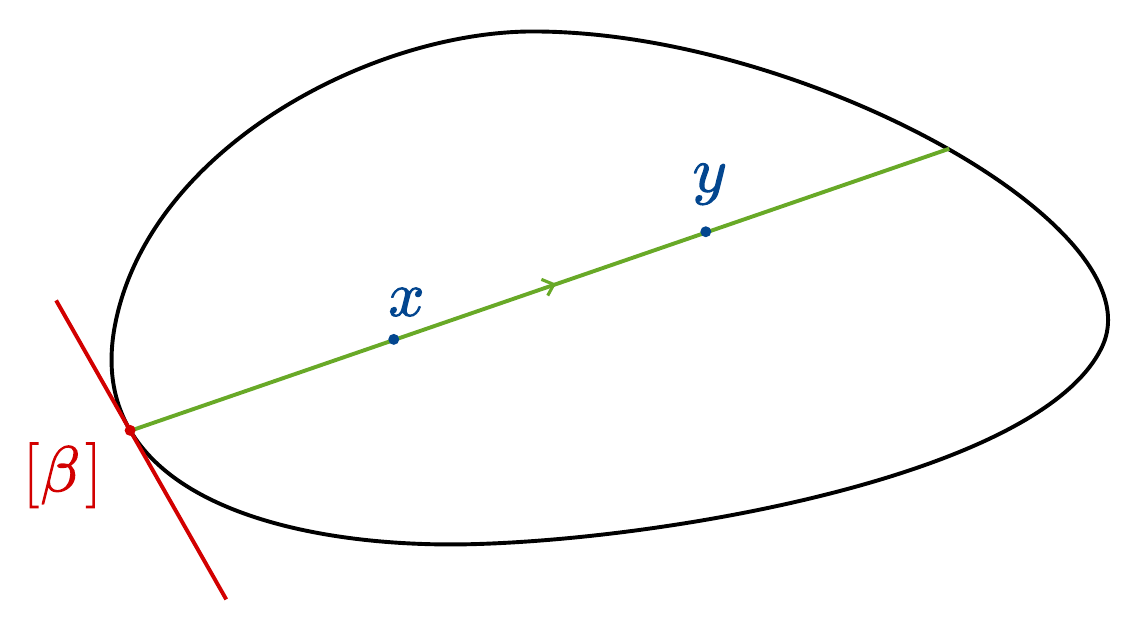}
\end{center}
\begin{remark}
In the case when $\Omega$ has a sharp corner at $p$, there are multiple choices of $[\beta]$, but they give the same value.
\end{remark}

\begin{remark}
Nicolas Tholozan pointed out to me an infinite dimensional case where $d^{DS}$ has been studied. Let $C$ be the cone of parametrizations of the geodisic foliation of the unit tangent bundle of a hyperbolic surface (up to weak conjugacy) and let $S$ be the hypersurface of positive, entropy 1 parametrizations. One can restrict the resulting domain shape metric on $\mathbb{P}(C)$ to spaces of Anosov representations, and get asymmetric Finsler metrics studied in \cite{tholozan_teichmuller_nodate} and \cite{carvajales_thurstons_2022}.
\end{remark}

Before we relate $d^{DS}$ to $F^{DS}$, we check that $d^{DS}$ satisfies the two axioms of an assymmetric metric: reflexivity, and the triangle inequality. Reflexivity is left to the reader. The triangle inequality will follow from the following lemma. 
\begin{lemma} Let $x,y,\Omega, S, \beta$, and $p$ be as above. If $[\beta']$ is any hyperplane which doesn't intersect $\Omega$, and doesn't pass through $p$, then:
\[\frac{\beta'(\tilde{y})}{\beta'(\tilde{x})} < \frac{\beta(\tilde{y})}{\beta(\tilde{x})}\]
\end{lemma}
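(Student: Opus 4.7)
The plan is to reduce the problem to a 2-dimensional picture inside the plane $V := \operatorname{span}(\tilde x, \tilde y) \subset \RR^{n+1}$, noting that both ratios depend only on the restrictions $\beta|_V, \beta'|_V$. Inside $V$ we have the convex cone $C_V := V \cap (\text{cone over }\Omega)$, which contains $\tilde x, \tilde y$ in its interior and is bounded by two rays through the origin. The affine line $L$ through $\tilde x$ and $\tilde y$ crosses $\partial C_V$ at $\tilde p$ on one side and at a second point $\tilde q$ on the other. Parametrize $L$ affinely by $t$ with $t(\tilde x) = 0$ and $t(\tilde y) = 1$, so $\tilde p$ sits at some $t_p < 0$ and $\tilde q$ at some $t_q > 1$.

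The main computation is that for any linear functional $\eta$ on $V$ which is positive on $\tilde x$ and $\tilde y$, if $\ker \eta \cap L$ is a single point at parameter $t_\eta \in \RR$, then writing this point as $(1-t_\eta)\tilde x + t_\eta \tilde y$ and evaluating $\eta$ there yields $\eta(\tilde y)/\eta(\tilde x) = 1 - 1/t_\eta$ (if $\ker\eta$ is parallel to $L$ the ratio is $1$, which we record as $t_\eta = \infty$). Applied to $\eta = \beta$, whose kernel contains the ray through $\tilde p$, this gives $t_\beta = t_p$ and hence $\beta(\tilde y)/\beta(\tilde x) = 1 - 1/t_p > 1$. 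The proof then reduces to showing $t_{\beta'} \in (-\infty, t_p) \cup [t_q, \infty) \cup \{\infty\}$, since a direct check of $t \mapsto 1 - 1/t$ shows that on $(-\infty, t_p)$ it takes values in $(1, 1 - 1/t_p)$, on $[t_q, \infty)$ values in $[1 - 1/t_q, 1)$, and at $\infty$ the value $1$ --- all strictly below $1 - 1/t_p$.

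The main obstacle, and the only place where the hypotheses really get used, is this last constraint on $t_{\beta'}$. The assumption that $[\beta']$ misses the open domain $\Omega$ translates into the statement that the line $\ker \beta'|_V$ does not cross the interior of $C_V$, which pushes the intersection $\ker \beta'|_V \cap L$ outside the open segment from $\tilde p$ to $\tilde q$, giving $t_{\beta'} \notin (t_p, t_q)$. The hypothesis that $[\beta']$ does not pass through $p$ excludes $t_{\beta'} = t_p$, leaving exactly the required range, and after normalizing the sign of $\beta'$ to be positive on the cone (automatic since $[\beta']$ doesn't cross it), the strict inequality follows.
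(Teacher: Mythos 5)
Your proof is correct and is essentially the paper's argument: both reduce the inequality to a one-dimensional computation along the line through $\tilde x$ and $\tilde y$ (the paper via an affine chart $A$ containing that line, normalized so $\alpha(\tilde x)=\alpha(\tilde y)=1$; you via the plane $V=\operatorname{span}(\tilde x,\tilde y)$), where the ratio $\eta(\tilde y)/\eta(\tilde x)=1-1/t_\eta$ depends monotonically on the position $t_\eta$ of $\ker\eta\cap L$, and the hypotheses force $t_{\beta'}$ outside the open chord $(t_p,t_q)$ and away from $t_p$. The only difference is that you make explicit the monotonicity the paper declares ``visually clear'' and you record the degenerate case where $\ker\beta'|_V$ is parallel to $L$.
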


\begin{proof} Let $p'$ be the point where the line through $x$ and $y$ hits the hyperplane $[\beta']$. We can find an affine hyperplane $A$ going through $\tilde{x}$ and $\tilde{y}$ whose intersection with the cone over $\Omega$ is as follows.

 By assumption, $[\tilde{x}-\tilde{y}]$ cannot be in $\bar{\Omega}$. This means that the dual hyperplane $[\tilde{x}-\tilde{y}]^*$ intersects the dual domain $\Omega^*$. Let $\alpha$ be a linear functional representing a point in the intersection. By construction, $\alpha(\tilde{x})=\alpha(\tilde{y})$, and we can scale $\alpha$ so that $\alpha(\tilde{x})=\alpha(\tilde{y})=1$. Let $A$ be the hyperplane defined by $\alpha=1$.

This affine hyperplane $A$ is identified with an affine chart of projective space containing $\Omega$. Arbitrarily choose a euclidean metric $d_A$ on $A$ compatible with the affine structure. Since $\beta|_A$ and $\beta'|_A$ are affine linear functions vanishing $[\beta]$ and $[\beta']$, they are proportional to the affine linear functions which simply measure signed euclidean distance to $[\beta]$ and $[\beta']$. Our desired inequality is thus equated with an inequality involving euclidean distances which is visually clear.
\[\frac{\beta'(\tilde{y})}{\beta'(\tilde{x})} = \frac{d_A(y,p')}{d_A(x,p')} < \frac{d_A(y,p)}{d_A(x,p)}=\frac{\beta(\tilde{y})}{\beta(\tilde{x})}\] 
\end{proof}
\begin{lemma}
\begin{enumerate}
	
\item $d^{DS}$ satisfies the triangle inequality,
$d^{DS}(x,y) + d^{DS}(y,z) \geq d^{DS}(x,z)$.
\item If $x$, $y$, $z$ are collinear, then we have equality.
\item If the domain is strictly convex, and $x$, $y$, $z$ are not collinear, then the inequality is strict.
\end{enumerate}

\end{lemma}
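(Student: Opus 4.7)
The plan is to derive all three statements from the previous lemma, applied with cleverly chosen auxiliary hyperplanes.

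For the triangle inequality, I will let $\beta$ be the supporting hyperplane at the boundary point $p_{xz}$ used to compute $d^{DS}(x,z)$, so that $d^{DS}(x,z) = \log(\beta(\tilde{z})/\beta(\tilde{x}))$. The key trick is to insert $\tilde{y}$ and split:
\[d^{DS}(x,z) = \log\frac{\beta(\tilde{y})}{\beta(\tilde{x})} + \log\frac{\beta(\tilde{z})}{\beta(\tilde{y})}.\]
The previous lemma, applied to the pair $(x,y)$ using $\beta$ as the auxiliary functional $\beta'$, bounds the first summand by $d^{DS}(x,y)$. Applied to the pair $(y,z)$, it bounds the second summand by $d^{DS}(y,z)$. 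Adding these gives (1). Note that although the previous lemma is stated with strict inequality under the hypothesis that $\beta'$ avoids the relevant boundary point, the non-strict inequality follows immediately by continuity (or by inspecting the proof), so applying it here does not require that hypothesis.

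For (2), suppose $x$, $y$, $z$ are collinear. One checks that if they occur in the natural order on the line with $y$ between $x$ and $z$ (in the sense that the ray from $z$ through $x$ passes through $y$ and the ray from $z$ through $y$ passes through $x$ after extension, etc.), then $p_{xy}$, $p_{yz}$, and $p_{xz}$ are all the same boundary point $p$, so a single supporting hyperplane $\beta$ computes all three distances. The equality $\log(\beta(\tilde{z})/\beta(\tilde{x})) = \log(\beta(\tilde{y})/\beta(\tilde{x})) + \log(\beta(\tilde{z})/\beta(\tilde{y}))$ is then immediate. One should also check the cases where the three collinear points occur in a different order; these either reduce to the previous case by relabeling or give a trivial identity because one of the terms vanishes.

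For (3), I return to the proof of (1) and note that both uses of the previous lemma give strict inequality provided the auxiliary hyperplane $\beta$ (which is the supporting hyperplane at $p_{xz}$) does not pass through $p_{xy}$ or $p_{yz}$. Here is where strict convexity enters: in a strictly convex domain the supporting hyperplane at $p_{xz}$ meets $\overline{\Omega}$ only at $p_{xz}$, so $\beta$ passing through $p_{xy}$ forces $p_{xy}=p_{xz}$, and then the lines $\overline{xy}$ and $\overline{xz}$ share the two distinct points $x$ and $p_{xz}$, forcing them to coincide and hence $y$ to lie on the line through $x$ and $z$; likewise for $p_{yz}$. Contrapositively, non-collinearity forces strict inequality in at least one of the two estimates, and hence in the sum.

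The only place I expect even minor friction is in cleanly handling the ``order along the line'' cases in (2); everything else is a mechanical application of the previous lemma together with the observation that in the strictly convex case, supporting hyperplanes have unique contact points.
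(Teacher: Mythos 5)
Your argument is correct and is essentially the paper's own proof: both insert $\tilde{y}$ into the ratio computed with the supporting hyperplane at $p_{xz}$ and bound each factor via the previous lemma, with strict convexity forcing the contact points to coincide (and hence the points to be collinear) whenever either application fails to be strict. The only difference is that you flag the ordering subtlety in part (2), which the paper glosses over; note that equality genuinely requires $y$ to lie on the segment from $x$ to $z$ (the case needed to conclude that projective lines are geodesics), and for other orderings the inequality is typically strict rather than reducing to a relabeling or a vanishing term.
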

\begin{center}
\includegraphics[width=8cm]{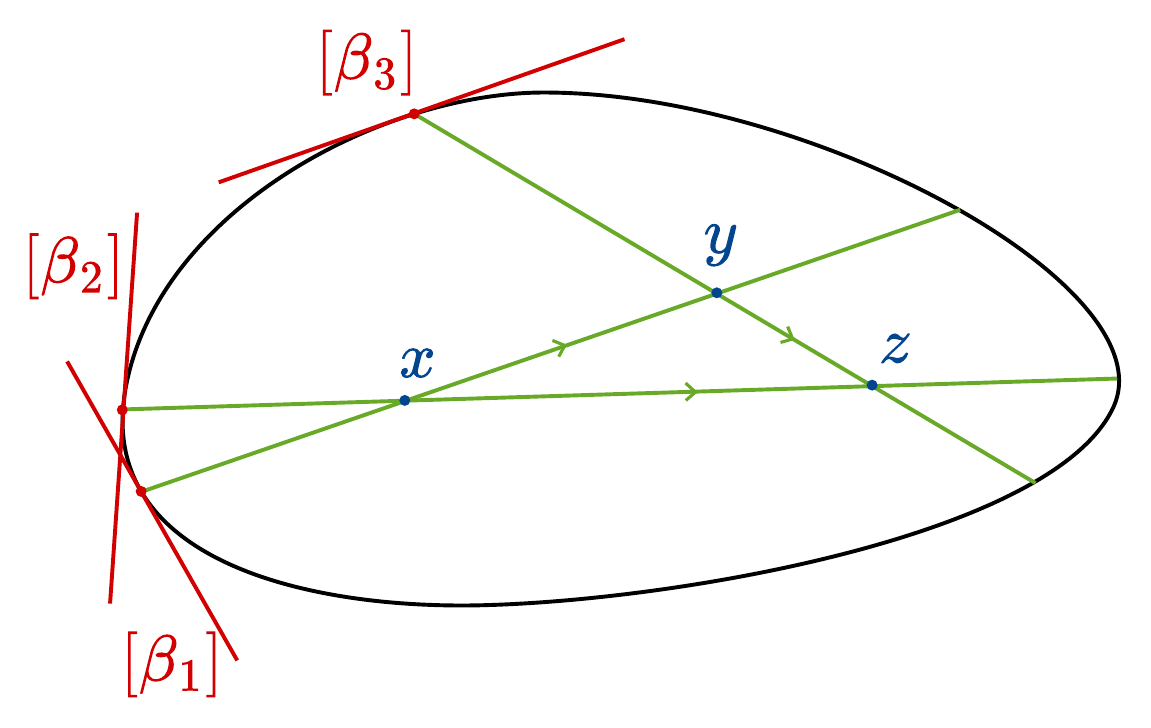}
\end{center}
\begin{proof}
Let $\beta_1,\beta_2$, and $\beta_3$ be linear functionals defining hyperplanes tangent to the points on $\del\Omega$ intersecting the rays $\overrightarrow{yx}$, $\overrightarrow{zy}$, and $\overrightarrow{zx}$ respectively.
\[ \frac{\beta_3(z)}{\beta_3(x)} =  \frac{\beta_3(z)}{\beta_3(y)} \frac{\beta_3(y)}{\beta_3(x)} \leq \frac{\beta_2(z)}{\beta_2(y)} \frac{\beta_1(y)}{\beta_1(x)}\]
Taking logarithms gives the triangle inequality. If $x$, $y$, and $z$ are colinear, then we can choose $\beta_1=\beta_2=\beta_3$, so we have equality. If $x$, $y$, and $z$ are not colinear, and $\Omega$ is strictly convex, then $\beta_1$, $\beta_2$, and $\beta_3$ are supporting hyperplanes at unique, and distinct points, so the previous lemma gives us strict inequality.
\end{proof}

Part $2$ of the preceding lemma implies that projective lines are geodesics of $d^{DS}$, and that $d^{DS}$ is a path metric. Part $3$ implies that if $\Omega$ is strictly convex, projective lines are the only geodesics. Finally, we check that $d^{DS}$ and $F^{DS}$ agree.
\begin{lemma}
$d^{DS}$ differentiates to $F^{DS}$.	
\end{lemma}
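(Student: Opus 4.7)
The plan is to linearize the defining formula for $d^{DS}(x,y)$ along a curve $y=\gamma(t)$ with $\gamma(0)=x$ and $\gamma'(0)=v$, and identify the coefficient of $t$ with $F^{DS}(x,v)$. Concretely, I want to show that for any $C^1$ curve $\gamma\colon[0,\epsilon_0)\to\Omega$ with $\gamma(0)=x$ and $\gamma'(0)=v$,
\[
\lim_{t\to 0^+}\frac{d^{DS}(x,\gamma(t))}{t}=F^{DS}(x,v).
\]

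First I would set up notation. Lift $\gamma$ to $\tilde\gamma(t)\in S$, so that $\tilde\gamma(0)=\tilde x$ and $\tilde\gamma'(0)=\tilde v\in T_{\tilde x}S\subset \RR^{n+1}$. For each small $t>0$, let $p_t\in\partial\Omega$ be the first point where the projective ray from $\gamma(t)$ through $x$ meets $\partial\Omega$, and let $\beta_t$ be a linear functional defining a supporting hyperplane at $p_t$, scaled so that $\beta_t(\tilde x)=1$ (possible since $x$ is interior, so $\tilde x$ does not lie on the hyperplane). Then by definition $d^{DS}(x,\gamma(t))=\log\beta_t(\tilde\gamma(t))$.

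Next I would expand. Writing $\tilde\gamma(t)=\tilde x+t\tilde v+O(t^2)$ and applying $\beta_t$ yields $\beta_t(\tilde\gamma(t))=1+t\,\beta_t(\tilde v)+O(t^2)$, where the error is uniform in $t$ because the normalized supporting functionals $\{\beta_t\}$ stay in a compact set. Taking logs,
\[
d^{DS}(x,\gamma(t))=t\,\beta_t(\tilde v)+O(t^2).
\]
As $t\to 0^+$ the ray from $\gamma(t)$ through $x$ converges to the ray from $x$ in direction $-v$, so $p_t\to p_0$, the boundary point figuring in the definition of $F^{DS}(x,v)$. By compactness any subsequential limit $\beta_0$ of the $\beta_t$ is a normalized supporting functional at $p_0$, and we conclude $\lim_{t\to 0^+}d^{DS}(x,\gamma(t))/t=\beta_0(\tilde v)=F^{DS}(x,v)$.

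The main obstacle is ensuring $\beta_t\to\beta_0$ robustly enough to justify the limit when $\partial\Omega$ has a corner at $p_0$: there the supporting hyperplane is not unique, and different subsequences of $\beta_t$ could a priori converge to different functionals. This is however already addressed by the remark following the definition of $d^{DS}$, which shows the ratio $\beta(\tilde y)/\beta(\tilde x)$ is independent of the choice of supporting hyperplane at a given boundary point; linearizing in $y$ transfers this independence to the first-order datum $\beta_0(\tilde v)/\beta_0(\tilde x)$, so the limit is well-defined regardless of the choice. In any event, for the applications to convex projective surfaces where $S$ is the affine sphere, the boundary is smooth and strictly convex, so the support functional is unique and no choice need be made.
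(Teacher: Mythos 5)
Your proof is correct, and it establishes something slightly stronger than what the paper's own argument checks. The paper sidesteps the entire difficulty you address by choosing the test curve $\gamma$ to be a projective line segment through $x$ in direction $v$: for such a curve the boundary point $p_t$ and hence the supporting functional $\beta$ are literally independent of $t$, so $d^{DS}(x,\gamma(t))=\log\bigl(\beta(\tilde\gamma(t))/\beta(\tilde\gamma(0))\bigr)$ holds exactly for all small $t>0$ and the derivative is a one-line computation. You instead take an arbitrary $C^1$ curve with $\gamma'(0)=v$, which forces you to track the moving supporting functionals $\beta_t$, and you correctly handle this via compactness of the normalized dual slice $\{\beta:\beta(\tilde x)=1\}$, continuity of $p_t\to p_0$, and the observation that any subsequential limit supports $\partial\Omega$ at $p_0$ and yields the same value $\beta_0(\tilde v)$ (the well-posedness of $F^{DS}$ at corners, which follows from the same collinearity argument as the remark after the definition of $d^{DS}$). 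What your version buys is that the one-sided derivative exists along every $C^1$ curve and depends only on $\gamma'(0)$ — which is really what ``$d^{DS}$ differentiates to $F^{DS}$'' ought to mean when $F^{DS}$ is merely continuous — whereas the paper implicitly relies on this curve-independence when it says the segment is chosen ``for convenience.'' Two small cosmetic points: for a curve that is only $C^1$ the expansion $\tilde\gamma(t)=\tilde x+t\tilde v+O(t^2)$ should read $o(t)$ rather than $O(t^2)$ (the conclusion is unaffected), and the limit should indeed be taken one-sided as you do, since the metric is asymmetric.
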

\begin{proof}
Let $v$ be a tangent vector at $x\in \Omega$. Let $\gamma:(-\epsilon,\epsilon)\to \Omega$ be a curve with $\gamma'(0)=v$ which we choose for convenience to be a projective line segment. Let $\beta$ be a linear functional as in the definition of $F^{DS}(x,v)$.
\[\frac{d}{dt}d^{DS}(\tilde\gamma(0),\tilde\gamma(t))|_{t=0} = \frac{d}{dt}\log(\frac{\beta(\tilde\gamma(t))}{\beta(\tilde\gamma(0))})|_{t=0} = \frac{\beta(\tilde\gamma'(0))}{\beta(\tilde\gamma(0))}=F^{DS}(x,v)\]
\end{proof}

\subsection{Domain shape metrics for dual hypersurfaces}
In this section we show that, if we choose compatible lifts, domain shape metrics for projectively dual domains are pointwise dual, with respect to a natural Riemannian metric. In the case when the lift is an affine sphere, this metric is known as the Blaschke metric. This will be used in section \ref{lower bounds} to turn upper bounds for $F^{DS}$ into lower bounds.

Let $V$ be a real vector space of dimension $n+1$. Let $\Omega\subset \mathbb{P}(V)$ be a properly convex domain. Let $S\subset V$ be a proper convex lift. Let $\Omega^* \subset \mathbb{P}(V^*)$ be the dual convex domain, and let $S^*\subset V^*$ be the dual convex lift: the set of linear functionals $\alpha$ such that $\inf \alpha(S)=1$. We say that $x\in S$ and $\alpha\in S^*$ are dual points if $\alpha(x) = 1$. If $S$ and $S^*$ are strictly convex, then each point in $S$ has exactly one dual point in $S^*$. Assume that $S$ is smooth, with everywhere positive hessian, so that the duality mapping between $S$ and $S^*$ is a diffeomorphism. In this setting, a natural Riemannian metric appears.

\begin{lemma}
Let $x:\RR^n\to S$ and $\alpha:\RR^n\to S^*$ be smooth, dual parametrizations. The matrix
\[g_{ij} = -\langle \del_i \alpha,\del_j x\rangle = \langle \alpha, \del_i \del_j x\rangle = \langle \del_i \del_j \alpha, x\rangle\]
is a Riemannian metric which doesn't depend on the parametrization.
\end{lemma}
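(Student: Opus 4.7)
The plan is to prove three things in sequence: that the three expressions agree, that the resulting bilinear form is symmetric and positive definite, and that it is parametrization-independent. The starting point is to extract two first-order identities from the duality relation $\langle \alpha, x\rangle = 1$ together with the geometric fact that $\alpha$ cuts out the supporting hyperplane to $S$ at $x$.

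First I would note that $\alpha(y) \geq 1$ for every $y \in S$ with equality at $x$, so $\alpha$ annihilates $T_x S$; that is, $\langle \alpha, \partial_i x\rangle = 0$. By the symmetric statement applied to $S^*$ (using that $x$, viewed as a functional on $V^*$, evaluates $\alpha$ to $1$ and is minimized on $S^*$ there, granted by reflexivity of double duality), we get $\langle \partial_i \alpha, x\rangle = 0$. Differentiating $\langle \alpha, \partial_j x\rangle = 0$ in the $i$-direction yields
\[\langle \partial_i \alpha, \partial_j x\rangle + \langle \alpha, \partial_i \partial_j x\rangle = 0,\]
and differentiating $\langle \partial_i \alpha, x\rangle = 0$ in the $j$-direction yields
\[\langle \partial_j \partial_i \alpha, x\rangle + \langle \partial_i \alpha, \partial_j x\rangle = 0.\]
Together these give the three claimed equalities, and the symmetry of $g_{ij}$ follows from the symmetry of mixed partials (applied either to $x$ or to $\alpha$).

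For positive definiteness, the key observation is that the restriction of $\alpha$ to $S$ attains a global minimum at $x$ (value $1$). Hence its Hessian at $x$, namely the symmetric form $(i,j) \mapsto \langle \alpha, \partial_i \partial_j x\rangle$, is positive semidefinite; the standing hypothesis that $S$ has everywhere positive Hessian upgrades this to strict positivity. This is the main content of the lemma; the subtlety is that we have to interpret the scalar-valued second form $\langle \alpha, \partial_i \partial_j x\rangle$ correctly as the Hessian of the height function $\alpha|_S$ in the chosen parametrization, so that positivity of the geometric Hessian of $S$ translates to positivity of this matrix. I expect this identification to be the main place where care is needed, since one must verify that nothing is lost in passing between the intrinsic notion of convexity of $S$ and the coordinate expression.

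Finally, for parametrization independence, a change of coordinates $u \mapsto v(u)$ replaces $\partial_i x$ by $\tfrac{\partial v^k}{\partial u^i} \partial_{v_k} x$ and likewise for $\alpha$, so each of the three expressions transforms tensorially. Equivalently, the invariant bilinear form on $T_x S$ sending the tangent vectors $X = \partial_i x$ and $Y = \partial_j x$ to $-\langle d\alpha(X), Y\rangle$ is coordinate-free to begin with, since $d\alpha: T_x S \to T_\alpha S^* \subset V^*$ is the differential of the duality diffeomorphism $S \to S^*$. The chain-rule check then wraps up the proof.
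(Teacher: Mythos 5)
Your proposal is correct and follows essentially the same route as the paper's proof: derive $\langle \alpha, \del_i x\rangle = \langle \del_i \alpha, x\rangle = 0$ from duality, differentiate to equate the three expressions, read off symmetry and positive definiteness from the middle expression together with the positive-Hessian hypothesis, and observe that the first expression is manifestly coordinate-free. You simply spell out a few steps (the supporting-hyperplane origin of the first-order identities, the tensorial transformation law) that the paper leaves implicit.
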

\begin{proof}
	The first expression $-\langle \del_i \alpha,\del_j x\rangle$ is independant of parametrization because we can phrase it in parametrization independant language: if $\beta\in T_\alpha S^*$ and $v\in T_x S$, these both give tangent vectors to $S$, (or $S^*$,) and we define $g(\beta,v)=-\langle \beta, v\rangle$. The second expression $\langle \alpha, \del_i \del_j x\rangle$ is clearly symmetric and positive definite because partial derivatives commute, and $S$ is convex with positive Hessian. It remains to show that these expressions are indeed equal. That $x$ and $\alpha$ are dual means that $\langle \alpha, \del_i x\rangle = \langle \del_i\alpha, x\rangle = 0$ for all $i$. Applying $\del_j$ to these equations gives the result.
\end{proof}

The Blaschke metric is $\langle \del_i \del_j \alpha, v\rangle$ where $v$ is a certain canonically defined normal vector called the `affine normal'. An affine sphere (centered at $0$) is precisely a hypersurface satisfying $v=x$, so for affine spheres $g$ is the Blaschke metric.

\begin{definition}
Let $\Omega$ be a convex subset of a vector space $V$ containing the origin, and let $g$ be a metric on $V$. The dual of $\Omega$ with respect to $g$ is $\Omega^{*_g}:=\{v \in V : g(v,\Omega) < 1\}\subset V$
\end{definition}
If $M$ is a manifold with Riemannian metric $g$, and Finsler metric $F$, let $F^{*_g}$ denote the Finsler metric whose unit balls are pointwise dual to unit balls of $F$ with respect to $g$.

\begin{lemma}
\label{DS duality}
If $S$ is a smooth, properly embedded lift of a properly convex domain $\Omega$ with positive definite Hessian, and we identify $S$ with $S^*$ by the duality diffeomorphism, then $F^{DS}_{S^*} = (F^{DS}_S)^{*_g}$, where $g$ is the metric defined above.
\end{lemma}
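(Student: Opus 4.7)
The plan is to compute the $g$-polar of the unit ball $B_x$ of $F^{DS}_S(x,\cdot)$ by first computing a natural polar using the pairing $V \times V^* \to \RR$, then converting this into a $g$-polar via the explicit formula for $g$ from the preceding lemma. For the parametrization step, I would note that $F^{DS}_S(x,v)\leq 1$ iff $\tilde x - \tilde v \in \overline{\operatorname{cone}(\Omega)}$, so every $\tilde v \in B_x$ is uniquely of the form $\tilde x - \tilde z/\langle\tilde\alpha,\tilde z\rangle$ for some $[\tilde z]\in \bar\Omega$ (the rescaling enforces $\langle \tilde\alpha, \tilde v\rangle = 0$, and $\langle \tilde\alpha, \tilde z\rangle > 0$ on $\overline{\operatorname{cone}(\Omega)}\setminus\{0\}$ by strict convexity). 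The analogous expression gives $B^*_\alpha = \{\tilde\alpha - \tilde\gamma/\langle\tilde\gamma,\tilde x\rangle : [\tilde\gamma]\in\bar\Omega^*\}$ inside $T_{\tilde\alpha}S^*$.

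Next I would compute the support function of $B_x$ with respect to the natural pairing between $T_{\tilde x}S\subset V$ and $T_{\tilde\alpha}S^*\subset V^*$. For $\tilde w\in T_{\tilde\alpha}S^*$ we have $\langle\tilde w, \tilde x\rangle = 0$, so the parametrization yields
\[h_{B_x}(\tilde w) = \sup_{\tilde v\in B_x}\langle\tilde w,\tilde v\rangle = -\inf_{[\tilde z]\in\bar\Omega}\frac{\langle\tilde w,\tilde z\rangle}{\langle\tilde\alpha,\tilde z\rangle}.\]
A parallel unpacking of $F^{DS}_{S^*}$ shows $F^{DS}_{S^*}(\alpha,-\tilde w) = -\inf_{[\tilde z]\in\bar\Omega}\langle\tilde w,\tilde z\rangle/\langle\tilde\alpha,\tilde z\rangle$, so $h_{B_x}(\tilde w) = F^{DS}_{S^*}(\alpha,-\tilde w)$ and the natural polar of $B_x$ inside $T_{\tilde\alpha}S^*$ is exactly $-B^*_\alpha$.

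Finally I would translate this into the $g$-polar. The formula $g_{ij} = -\langle\partial_i\alpha,\partial_j x\rangle$ gives $g(\tilde u,\tilde v) = -\langle d(\tilde u),\tilde v\rangle$ for $\tilde u, \tilde v\in T_{\tilde x}S$, where $d:T_{\tilde x}S\to T_{\tilde\alpha}S^*$ is the derivative of the duality diffeomorphism. Hence $g(\tilde v,\tilde u)\leq 1$ for all $\tilde v\in B_x$ is equivalent to $-d(\tilde u)$ lying in the natural polar $-B^*_\alpha$, i.e.\ $d(\tilde u)\in B^*_\alpha$. Therefore $B_x^{*_g} = d^{-1}(B^*_\alpha)$, which under the tangent-space identification via $d$ is the claimed equality $(F^{DS}_S)^{*_g} = F^{DS}_{S^*}$.

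The main obstacle will be bookkeeping the three identifications between $T_{\tilde x}S$ and $T_{\tilde\alpha}S^*$: via $d$, via the $V\times V^*\to\RR$ pairing, and via $g$. The formula $g(\tilde u,\tilde v) = -\langle d(\tilde u),\tilde v\rangle$ ties them together, and the two minus signs --- one in this formula, one in the natural polar being $-B^*_\alpha$ --- must cancel cleanly to produce $+B^*_\alpha$ at the end.
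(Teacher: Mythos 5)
Your proposal is correct and follows essentially the same route as the paper's proof: both reduce the claim to the identity $\langle\alpha-\beta,\,x-v\rangle = 1+\langle\beta,v\rangle$ (which in your language is the support-function computation using $\langle\alpha,x\rangle=1$, $\alpha|_{T_xS}=0$, $x|_{T_\alpha S^*}=0$) together with $g(\tilde u,\tilde v)=-\langle d(\tilde u),\tilde v\rangle$. Your detour through the explicit parametrization of $B_x$ by $\bar\Omega$ and the intermediate statement that the natural polar is $-B^*_\alpha$ is just a repackaging of the paper's direct manipulation of the defining inequalities, and the signs do cancel as you anticipate.
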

\begin{proof}
Let $C$ denote the cone $\RR_+ S$. Let $B$ denote the unit ball of $F^{DS}_S$ at $x\in S$. 
\[B = \{v 
\in T_x S  : x-v\in C\}\]
The unit ball of $F^{DS}_{S^*}$ at $\alpha$ is 
\[B^* = \{\beta 
\in T_\alpha S^*  : \langle \alpha - \beta, C \rangle > 0 \}\]
$C$ is the $\RR_+$ span of $x-B$, so equivalently
\[B^* = \{\beta 
\in T_\alpha S^*  : \langle \alpha - \beta, x-v \rangle > 0 \;\;\;\forall v\in B\}\]
Recall $\langle \alpha,x\rangle=1$, $\alpha$ vanishes on $T_x S$ and $x$ vanishes on $T_\alpha S^*$, so we have $\langle \alpha - \beta, x-v \rangle = 1 + \langle \beta, v\rangle$. Finally we have
\[B^* = \{\beta 
\in T_\alpha S^*  : -\langle \beta, v \rangle < 1 \;\;\;\forall v\in B\}\]
If we identify $T_\alpha S^*$ and $T_x S$ by the differential of the duality map, then $-\langle \beta, v \rangle$ is $g$ evaluated on $\beta$ and $v$, so the unit balls of $F^{DS}_S$ and $F^{DS}_{S^*}$ are dual with respect to the $g$.
\end{proof}

\section{Review of the Labourie-Loftin correspondence}
The Labourie-Loftin correspondence \cite{loftin_affine_2001}, \cite{labourie_flat_2006} provides a bijection between pairs $(J,\mu)$, where $J$ is a complex structure, and $\mu$ is a holomorphic cubic differential, and convex projective structures, on a surface of genus at least $1$. After quotienting both the space of pairs $(J,\mu)$, and the space of convex projective structures by $\operatorname{Diff}_0(S)$, the Labourie-Loftin correspondence becomes a diffeomorphism from the bundle of cubic differentials over Teichmuller space, to the Hitchin component. Here, we review how one gets a convex projective structure on $S$ from a pair $(J,\mu)$. 

Let $S$ be a Riemann surface with cubic differential $\mu$. It turns out that there is a unique hermitian metric $g$ on $S$ satisfying Wang's equation.
\[\kappa_g = |\mu|^2_g - 1\]
From the data $(S,J,\mu,g)$ we will construct a convex projective structure. This means a developing-holonomy pair: a representation $\pi_1(S)\to \operatorname{SL}_3\RR$, and an equivariant embedding $\tilde{S}\to \mathbb{P}(\RR^3)$ whose image is a convex set. We will actually construct an equivariant map $\tilde{S}\to \RR^3$, whose image is a strictly convex hypersurface, (in fact an affine sphere) which we can compose with the projection to $\mathbb{P}(\RR^3)$ to get a developing map. 

The construction of the developing holonomy pair necessitates a choice of base point, and it is useful to pay attention to this choice. For each point $x\in S$, we will construct an affine sphere in the three dimensional vector space $T_x S\oplus \underline{\RR}$ with $\pi_1(x,S)$ action, but different choices of $x$ will give isomorphic results. There is an explicit formula for a real flat connection $\nabla$ on $TS\oplus \underline{\RR}$ in terms of $\mu$ and $g$. We write this formula in terms of the complexification which has a natural line decomposition $TS \oplus \overline{TS} \oplus \underline{\CC}$:
\[
\nabla = 
\begin{bmatrix}
 	D_{TS}^g & g^{-1}\bar{\mu} & 1 \\
 	g^{-1}\mu & D_{\overline{TS}}^g & 1 \\
 	g & g & 0 \\
 \end{bmatrix}
 \]
 Here, $D_{TS}^g$, and $D_{\overline{TS}}^g$ are the Chern connections, which both coincide with the Levi-Civita connection for $g$. The off-diagonal entries are maps of various line bundles. 
 
One checks that $\nabla$ is real, and that its flatness is equivalent to Wang's equation. We sketch this second computation here.

\begin{lemma} $\nabla$ is flat if and only if $\kappa_g = |\mu|^2_g - 1$.	
\end{lemma}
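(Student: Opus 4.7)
The plan is to split the connection as $\nabla = D + A$, where $D = \operatorname{diag}(D^g_{TS}, D^g_{\overline{TS}}, d)$ is the block-diagonal piece built from the two Chern connections and the trivial connection on $\underline{\CC}$, and $A$ is the off-diagonal bundle-valued 1-form holding the entries $g^{-1}\mu$, $g^{-1}\bar\mu$, the two $1$'s, and the two $g$'s. Then the curvature decomposes as
\[
R_\nabla \;=\; R_D \;+\; d^D A \;+\; A \wedge A,
\]
and I would analyze flatness block-by-block.

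First I would handle the diagonal blocks. The only contributions to $R_\nabla$ on, say, the $TS$-summand come from $R_{D^g_{TS}}$ together with the two ways $A$ can land back on $TS$: the composition $(g^{-1}\bar\mu)\circ(g^{-1}\mu)$ and the composition $1\circ g$. The Chern connection curvature $R_{D^g_{TS}}$ is the standard $-\kappa_g$ times the area form, the composition $(g^{-1}\bar\mu)(g^{-1}\mu)$ pairs $\mu$ against itself in $g$ and so produces $|\mu|^2_g$ times the area form, and $1\circ g$ contributes a factor of $-1$ times the area form (with the sign coming from the wedge). Requiring these to cancel is exactly
\[
-\kappa_g + |\mu|^2_g - 1 \;=\; 0,
\]
which is Wang's equation. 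The $\overline{TS}$-diagonal and the trivial-diagonal blocks give the same equation by symmetry (using that $\nabla$ is real, i.e.\ conjugation-invariant).

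Next I would check that the off-diagonal blocks of $R_\nabla$ vanish automatically under the standing hypotheses. The $d^D A$ terms on the $TS$-to-$\overline{TS}$ slot amount to $\bar\partial(g^{-1}\mu)$, and using that the Chern connection is compatible with $g$, this reduces to $g^{-1}\bar\partial\mu$, which vanishes by the holomorphicity of $\mu$; its conjugate handles the $\overline{TS}$-to-$TS$ slot. For the off-diagonal slots involving the trivial line, the $d^D A$ terms are $\bar\partial(1)$, $\partial(1)$, $\bar\partial g$, and $\partial g$, all of which vanish by type considerations and compatibility of the Chern connection with $g$. The $A\wedge A$ off-diagonal contributions are sums of products like $(g^{-1}\mu)\cdot 1$ against $1\cdot g$, and these cancel in pairs by the symmetry of the matrix (this is the same mechanism that forces the two $1$'s and two $g$'s to appear, and is the reason $\nabla$ is real).

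The main obstacle is purely bookkeeping: tracking the bundle maps (which pieces are $(1,0)$-forms versus $(0,1)$-forms, which are sections of $T^*S$, $\overline{T^*S}^{\otimes 3}$, etc.), fixing a consistent normalization for $\omega_g$ and the Chern curvature, and nailing down the signs arising from wedge products so that the three diagonal contributions combine with the correct signs to produce exactly $\kappa_g - |\mu|^2_g + 1 = 0$. Once these conventions are pinned down, each block is a one-line computation and the off-diagonal vanishing is automatic from holomorphicity of $\mu$ and $g$-compatibility of the Chern connection.
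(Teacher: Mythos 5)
Your argument is correct in substance and computes the same object the paper does, but it is organized differently: you split $\nabla = D + A$ into an invariant block-diagonal Chern piece and an off-diagonal piece and use $R_\nabla = R_D + d^D A + A\wedge A$, whereas the paper fixes a local holomorphic coordinate, writes $\nabla = d + A_1\,dz + A_2\,d\bar z$ with the Chern connection forms $\partial\phi,\bar\partial\phi$ sitting inside explicit $3\times 3$ matrices, and computes the curvature entry by entry, finding it proportional to $\operatorname{diag}(-1,1,0)\,(\partial_z\partial_{\bar z}\phi + \tfrac12(e^{-2\phi}|\mu_0|^2 - e^{\phi}))\,dz\wedge d\bar z$. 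Your organization buys conceptual clarity: in coordinates the vanishing of the off-diagonal entries looks like an unexplained cancellation between $\partial_{\bar z}(e^{-\phi}\mu_0)$ and commutator terms involving $\partial\phi$, while in your formulation it is transparently ``$\bar\partial^D(g^{-1}\mu)=0$ by holomorphicity of $\mu$ and $D$-parallelism of $g$.'' Two corrections. First, the off-diagonal entries of $A\wedge A$ do not ``cancel in pairs by symmetry''; each vanishes individually for type reasons (e.g.\ the $TS\to\overline{TS}$ slot is the wedge of the $(0,1)$-form $\underline{\CC}\to\overline{TS}$ entry with the $(0,1)$-form $TS\to\underline{\CC}$ entry, hence zero, and likewise for the other four slots), so no symmetry argument is needed or available there. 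Second, the ``bookkeeping'' you defer is not entirely innocent on the diagonal: with the standard identity $A\wedge A = [A_1,A_2]\,dz\wedge d\bar z$ the Chern-curvature term and the two $A\wedge A$ terms come out in relative weights $1:2:2$ rather than the $1:1:1$ your sketch assumes (note the $\tfrac12$ in front of $[A_1,A_2]$ in the paper's curvature formula), so you must fix a normalization for $|\mu|_g$ and for the curvature before the three contributions assemble into exactly $\kappa_g - |\mu|_g^2 + 1 = 0$; this relative factor, not the overall sign, is the one place the computation can silently go wrong.
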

\begin{proof}
This statement can be checked locally. Choose a local holomorphic coordinate $z$ on $S$. Write $g=e^\phi dzd\bar{z}$, and $\mu=\mu_0 dz^3$. We get a frame $\del_z,\del_{\bar{z}},\underline{1}$ of $(TS\oplus\RR)\otimes \CC$. In this frame, we can write $\nabla$ as the de Rahm differential plus a matrix valued $1$-form.
\[\nabla = d + A_1 dz + A_2 d\bar{z}\]
\begin{equation}
\label{connection formula}
A_1=\begin{bmatrix}
 	\del \phi & 0 & 1 \\
 	e^{-\phi}\mu_0 & 0 & 0 \\
 	0 & e^\phi & 0 \\
 \end{bmatrix}dz\;\;\;\;\;\;\;\;\;A_2 = \begin{bmatrix}
 	0 & e^{-\phi}\bar{\mu}_0 & 0 \\
 	0 & \delbar\phi & 1 \\
 	e^\phi & 0 & 0 \\
 \end{bmatrix}d\bar{z}
\end{equation}
The curvature of $\nabla$ is 
\[F_\nabla= (-\del_{\bar{z}} A_1 + \del_z A_2 + \frac12[A_1,A_2])dz\wedge d\bar{z}\]
This comes out to be
\[F_\nabla = \begin{bmatrix}
 	-1 & 0 & 0 \\
 	0 & 1 & 0 \\
 	0 & 0 & 0 \\
 \end{bmatrix}(\del_z\del_{\bar{z}}\phi + \frac12(e^{-2\phi}|\mu_0|^2 - e^\phi))dz\wedge d\bar{z}
\]
Recall that $\del_z\del_{\bar{z}}=\frac14\Delta$ where $\Delta=\del_x^2+\del_y^2$ is the laplacian. Vanishing of $F_\nabla$ becomes a PDE for $\phi$:
\[\frac12\Delta\phi = -e^{-2\phi}|\mu_0|^2 + e^\phi\]
Recalling the formula $\kappa_g = -\frac12 e^{-\phi}\Delta \phi$ for the gauss curvature, we get the coordinate independant form of the equation. 
\[\kappa_g = |\mu|_g-1\]
\end{proof}

The formula for $\nabla$ may seem a bit mysterious, so we mention two perspectives from which one could derive it. First, for a general strictly convex surface $S$ in three dimensional affine space $\RR^3$ (endowed with a translation invariant volume form), there is a canonically defined normal vector field called the affine normal, and a metric called the Blaschke metric. The affine normal lets us identify $T\RR^3|_S$ with $TS\oplus \RR$, and the complex structure induced by the Blaschke metric gives a line decomposition $TS\otimes \CC = TS\oplus \overline{TS}$. If we write the trivial connection on $T\RR^3|_S$ with respect to this line decomposition, it takes a form quite similar to (\ref{connection formula}), but with non-trivial tensors in the last column. The fact that the connection takes the form (\ref{connection formula}) is equivalent to $S$ being a hyperbolic affine sphere. We refer to \cite{loftin_survey_2008} for a survey on affine spheres.

From a completely different point of view, Wang's equation is really a special case of Hitchin's equation: if we use $g$ to identify $\overline{TS}$ with $TS^{*}$, we get a harmonic bundle solving the Hitchin equation for the rank 3 Higgs bundle in the Hitchin section corresponding to the cubic differential $\mu$, and the zero quadratic differential. 

 $TS\oplus \underline{\RR}$ has a natural section, namely $\underline{1}\in \Gamma(\underline{\RR})$. Let $\tilde{S}_x$ denote the universal cover of $S$ based at $x\in S$, constructed explicitly as the collection of pairs $(y,[\gamma])$, where $y\in S$ and $[\gamma]$ is a homotopy class of path from $x$ to $y$. For each point $x\in S$, let $\xi_x:\tilde{S}_x\to T_xS\oplus \RR$ denote the map which takes a point $(y,[\gamma])$ to the parallel transport of $\underline{1}(y)$ back to $x$, along $\gamma$ using the connection $\nabla$. We call $\xi_x$ the affine sphere developing map based at $x$. One can deduce, from the form of $\nabla$, that the image of $\xi_x$ is indeed an affine sphere: the affine normal at $x$ is a fixed scalar multiple of $\xi_x$. 

\begin{center}
\includegraphics[width=12cm]{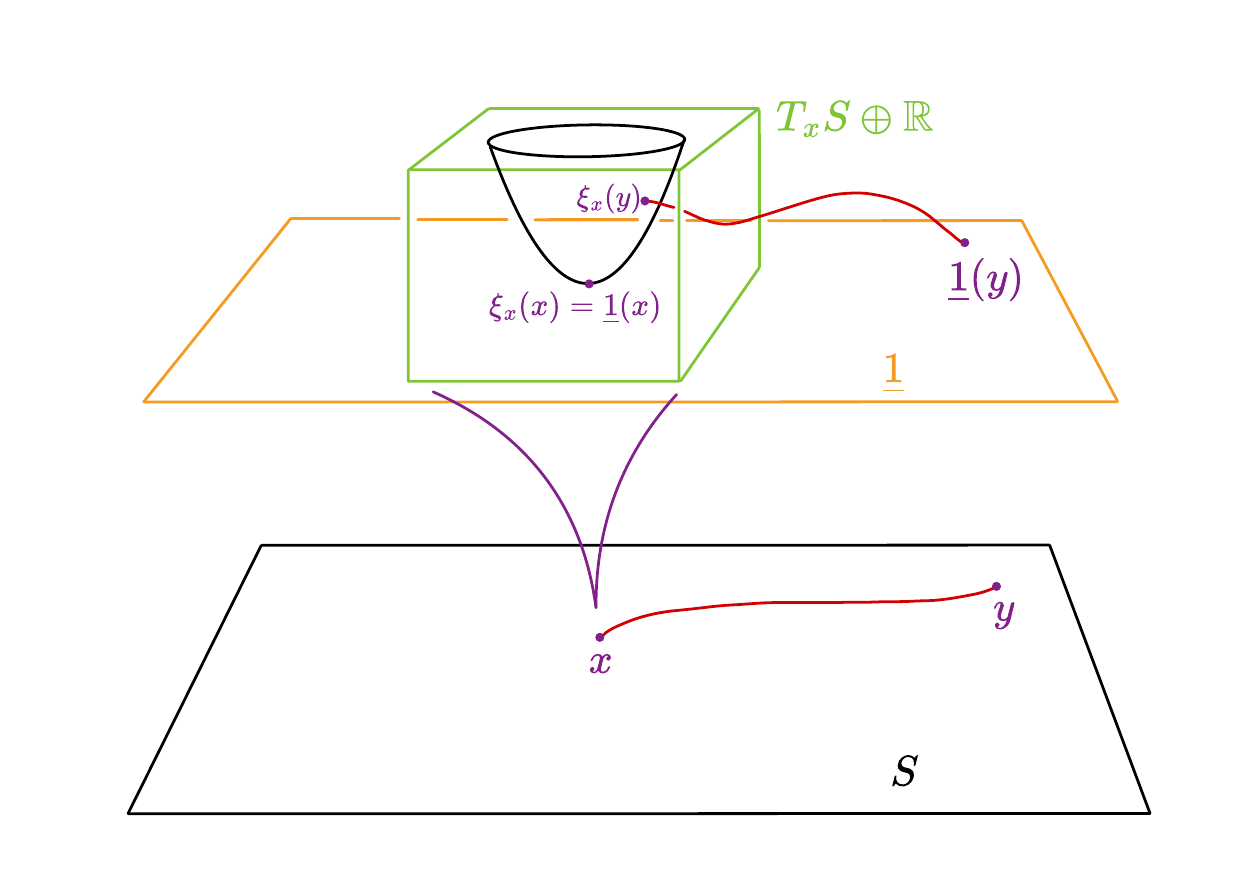}
\end{center}

The fundamental group $\pi_1(S,x)$ acts on $\tilde{S}_x$ via deck transformations, and on $T_x S\oplus \RR$ via the holonomy of $\nabla$. The map $\xi_x$ is equivariant for these two actions by construction. If we choose a different point $x'\in S$, then parallel transport along any path from $x$ to $x'$ will be a volume preserving linear map $T_x S\oplus \RR\to T_{x'} S\oplus \RR$ identifying the affine spheres $\operatorname{Im}(\xi_{x})$ and $\operatorname{Im}(\xi_{x'})$, which intertwines the $\pi_1(x, S)$ and $\pi_1(x', S)$ actions.

\section{$F^{DS}$ is close to $F^\Delta$ far from zeros}
To make precise what we mean by ``far from zeros" we need a metric. A Riemann surface $S$ with cubic differential $\mu$, has a singular, flat Riemannian metric $h$ defined by the equation $|\mu|_{h} = 1$. We will call this singular flat metric $h_\mu$.
\begin{theorem}
\label{metricestimate}
There exists a function $\epsilon:\RR_+ \to (0, \infty]$ with $\lim_{r\to\infty}\epsilon(r) = 0$ such that for any closed Riemann surface $S$ with cubic differential $\mu$, 
\[|\log\frac{F^{DS}_\mu(x,v)}{F^\Delta_\mu(x,v)}| < \epsilon(r(x))\]
for any non-zero tangent vector $v$ at any point $x$, where $r(x)$ denotes the distance from $x$ to the closest zero of $\mu$, with respect to the metric $h$.
\end{theorem}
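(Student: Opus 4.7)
The plan is to argue by compactness and rescaling. Suppose for contradiction that no such $\epsilon$ exists, so that one can find a sequence $(S_n, \mu_n, x_n, v_n)$ with $r(x_n)\to \infty$ but with the ratio $F^{DS}_{\mu_n}(x_n,v_n)/F^\Delta_{\mu_n}(x_n,v_n)$ bounded away from $1$. Since the $h_{\mu_n}$-ball of radius $r(x_n)$ around $x_n$ contains no zero of $\mu_n$, the flat coordinate $z_n$ with $\mu_n = dz_n^3$ and $z_n(x_n)=0$ identifies a neighborhood of $x_n$ biholomorphically with the Euclidean disk $B_{r(x_n)}(0) \subset (\CC, dz^3)$. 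In this way one obtains a sequence of Wang solutions $\phi_n$ on larger and larger Euclidean disks for the fixed model cubic differential $dz^3$.

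The crux is then to prove that $\phi_n \to 0$ in $C^k_{\mathrm{loc}}(\CC)$. A priori $C^0$ bounds on $\phi_n$ on interior sub-disks, uniform in $n$, should follow from maximum principle and barrier arguments, using that $\phi \equiv 0$ is the unique bounded entire solution of Wang's equation for $dz^3$ on $\CC$ (the Tzitzeica flat conformal factor) and that the nonlinearity $e^\phi - e^{-2\phi}$ is strictly monotone. Interior Schauder estimates for this semilinear elliptic equation upgrade the $C^0$ bound to $C^k_{\mathrm{loc}}$ precompactness; any subsequential limit is a bounded entire solution and must equal $0$, forcing the full sequence to converge.

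Once $\phi_n \to 0$ in $C^k_{\mathrm{loc}}$, the connection matrix in formula (\ref{connection formula}) converges locally uniformly to that of the Tzitzeica affine sphere, so the developing maps $\xi_{x_n}$ converge on compact sets to the Tzitzeica developing map whose projective image is an equilateral triangle. The domain shape metric of this triangle, evaluated at the image of $0$, equals $F^\Delta_{dz^3}$ at the origin by the three-fold symmetry of the triangle and the definition of $F^{DS}$ in terms of supporting hyperplanes. Since both $F^{DS}$ and $F^\Delta$ transform identically under the rescaling $\mu \mapsto a^3 \mu$ induced by the flat coordinate, this gives $F^{DS}_{\mu_n}(x_n,v_n)/F^\Delta_{\mu_n}(x_n,v_n) \to 1$, contradicting the assumption.

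The main obstacle is making the $C^0$ bound on $\phi_n$ genuinely uniform, i.e., depending only on the flat-metric distance $r(x_n)$ to the complement of the chart and not on any global feature of $S_n$. Global maximum principle bounds on closed $S_n$ are automatic but may degenerate with $n$; what one really wants is an interior estimate on $B_{r(x_n)}(0)$ that improves as $r(x_n) \to \infty$. Producing radial sub- and super-solutions that pinch $\phi_n$ toward $0$ on progressively smaller sub-disks is the main analytic task, analogous to the convergence results of \cite{nie_limit_2022} but carried out with rates independent of the ambient surface.
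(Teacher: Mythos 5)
Your overall skeleton---pass to the flat coordinate $z$ with $\mu=dz^3$ on the zero-free disk of radius $r(x)$, show the Wang conformal factor $\phi$ tends to $0$ with a rate depending only on $r(x)$, and then compare the affine sphere developing map to the \c{T}i\c{t}eica one---is the same first half as the paper's argument (the paper gets the uniform $C^0$ decay of $\phi$ from \cite{nie_poles_2023} and upgrades it to $C^1$ by an interior gradient estimate; your sub/supersolution plan is a reasonable substitute, and the contradiction/compactness framing versus explicit $\epsilon(r)$ functions is only cosmetic). But there is a genuine gap in the final step. The unit ball of $F^{DS}_{\mu}$ at $x$ is the antipodal image of the projectivization of the \emph{entire} affine sphere $\xi_x(\tilde S_x)$, i.e.\ of the full convex domain $\Omega$, not of the image of a large compact disk around $x$. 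Locally uniform convergence of $\xi_{x_n}$ to the \c{T}i\c{t}eica map only shows that the unit ball of $F^{DS}_{\mu_n}$ at $x_n$ \emph{contains} a set converging to the triangle, i.e.\ it gives $F^{DS}_{\mu_n}(x_n,v_n)\lesssim F^{\Delta}(x_n,v_n)$. Nothing in a compact-set limit prevents the rest of the universal cover from developing to a much larger convex domain, which would make $F^{DS}$ strictly smaller than $F^\Delta$; your sentence ``the developing maps converge \ldots\ to the Tzitzeica developing map whose projective image is an equilateral triangle'' silently replaces the actual domain $\Omega_n$ by the limit of compact pieces.

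The paper closes exactly this hole with projective duality (Lemma \ref{DS duality} and Section \ref{lower bounds}): the dual affine sphere is the affine sphere for $-\mu$, so the truncation upper bound $F^{DS}_{-\mu}\le F^{DS,d}_{-\mu}$, dualized with respect to the Blaschke metric $2g$, becomes the needed lower bound $F^{DS}_{\mu}\ge (F^{DS,d}_{-\mu})^{*_{2g}}$; the $C^0$ closeness of $g$ to $h_\mu$ and the identity $(F^\Delta_{-\mu})^{*_{2h}}=F^\Delta_\mu$ then show this lower bound is also close to $F^\Delta_\mu$. Equivalently, one must control the \emph{tangent planes} of the affine sphere over the large disk (an outer approximation of the cone over $\Omega$), not just its points (an inner approximation). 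To repair your proof you should add this dual estimate, or some other argument bounding $\Omega_n$ from outside by an almost-triangle; with that in place the rest of your outline goes through.
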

This theorem is simply a more uniform version of the part of Nie's result \cite{nie_limit_2022} dealing with triangles, and we will prove it following his method.

\subsection{Proof of theorem \ref{metriclimit} assuming theorem \ref{metricestimate}}
We will now prove Theorem \ref{metriclimit} from the introduction as an easy consequence of Theorem \ref{metricestimate}. 

\begin{proof}
Fix $R>0$. Let $S_R \subset S$ be the set of points which are at least distance $R$ away from all zeros of $\mu$, in the metric $h_\mu$. It will suffice to show that $F^{DS}_{\mu_i}/a^i$ converges uniformly to $F^\Delta_\mu$ on $S_R$. Uniform convergence of $a_i^3\mu_i$ to $\mu$ implies that for all $\epsilon>0$, there exists $N$ such that zeros of $\mu_i$ for $i>N$ are all in an $\epsilon$ neighborhood of the zeros of $\mu$ with respect to $h_\mu$. Consequently, the limit of the $h_{\mu}$ distance between the zeros of $\mu_i$ and $S_R$ is $R$. The $h_{\mu_i}$ distance between zeros of $\mu_i$ and $S_R$ must then go to infinity, because $a_i h_{\mu_i}$ converges uniformly to $h_\mu$. This means that the ratio between $F^{DS}_{\mu_i}$ and $F^\Delta_{\mu_i}$ limits to $1$ uniformly on $S_R$. The ratio of $a_i F^\Delta_{\mu_i}$ to $F^\Delta_{\mu}$ also goes to $1$ uniformally on $S_R$. It follows that the ratio of $a^iF^{DS}_{\mu_i}$ to $F^\Delta_\mu$ goes uniformally to $1$ on $S_R$.
\end{proof}

\subsection{Blaschke metric estimate}
Note that the singular flat metric $h_\mu$, defined by $|\mu|_{h_\mu}=1$, is a solution to Wang's equation on the complement of the zeros of $\mu$. The first step in proving theorem \ref{metricestimate} is to show that, far from zeros, the global solution to Wang's equation is close, in $C^1$ norm, to this singular flat solution. 
\begin{lemma}
There exists a function $\epsilon_{C^1}:\RR_+ \to (0,\infty]$ limiting to zero as the input goes to infinity, such that if $S$ is a Riemann surface with holomorphic cubic differential $\mu$ such that $h_\mu$ is complete, $g$ is the complete solution to Wang's equation, and $g=e^\phi h_\mu$, then
\[|\phi(x)| + |d\phi_x|_{h_\mu} < \epsilon_{C^1}(r(x))\]
for all $x\in S$, where $r(x)$ denotes the $h_\mu$ distance from $x$ to the closest zero of $\mu$. 
\end{lemma}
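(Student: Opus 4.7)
The plan is to reduce Wang's equation to a scale-invariant semilinear PDE for $\phi$ in the flat background $h_\mu$, estimate $|\phi|$ pointwise by comparison with a radial supersolution on a flat disk, and finally bootstrap from $C^0$ to $C^1$ using interior Schauder estimates.

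First, I would derive the PDE. Away from zeros, fix a local flat coordinate $w$ with $\mu = dw^3$, so $h_\mu = |dw|^2$. Then $|\mu|^2_g = e^{-3\phi}$ and $\kappa_g = -\tfrac12 e^{-\phi}\Delta_{h_\mu}\phi$ (since $h_\mu$ is flat), so Wang's equation becomes
\[
\Delta_{h_\mu}\phi \;=\; 2\bigl(e^{\phi} - e^{-2\phi}\bigr) \;=:\; f(\phi),
\]
with $f$ strictly increasing, $f(0)=0$, and $f'(0)=6$. I would then establish $\phi \geq 0$: smoothness of $g$ across a zero of $\mu$ of order $k$ forces $\phi\sim -\tfrac{2k}{3}\log|z|\to +\infty$ at every such zero, and on the open set $\{\phi<0\}$ the function $\phi$ is superharmonic (since $f(\phi)<0$ there); its boundary values on $\{\phi=0\}$ are zero, while at infinity in the complete metric $h_\mu$ one invokes an Omori--Yau-type maximum principle, so the minimum principle forces $\{\phi<0\}=\emptyset$.

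For the $C^0$ estimate, given $x\in S$ with $r(x)=r$, the $h_\mu$-disk $D(x,r)$ embeds isometrically as a flat Euclidean disk containing no zeros of $\mu$. I would produce a radial maximal supersolution: by standard ODE theory, the equation $U''+U'/s = f(U)$ with $U'(0)=0$ admits a unique solution $U_r$ on $[0,r)$ with $U_r(s)\to+\infty$ as $s\to r$. Comparison on $D(x,r)$ gives $\phi\le U_r$, hence $\phi(x) \leq U_r(0)=:\epsilon_0(r)$. Linearizing around $0$ yields $U''+U'/s = 6U$, whose bounded radial solution is the modified Bessel function $c\,K_0(\sqrt{6}\,s)$, decaying exponentially; a comparison between $U_r$ and its linearization shows $\epsilon_0(r)\to 0$ as $r\to\infty$. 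Combined with $\phi\ge 0$ this yields $|\phi(x)| \leq \epsilon_0(r(x))$.

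Finally I would upgrade to a $C^1$ bound via interior elliptic regularity on the flat unit disk $D(x,1)\subset D(x,r(x))$, valid whenever $r(x)\geq 1$: since $|\phi|\le \epsilon_0(r(x)-1)$ on this disk and $f$ is smooth with $f(0)=0$, Schauder estimates for $\Delta\phi=f(\phi)$ yield $|d\phi|_{h_\mu}(x) \le C'\|\phi\|_{L^\infty(D(x,1))}\le C'\epsilon_0(r(x)-1)$, with $C'$ a universal constant independent of the surface (scale-invariance is used here). Setting $\epsilon_{C^1}(r) := \epsilon_0(r) + C'\epsilon_0(r-1)$, and $+\infty$ for $r<1$, gives the lemma. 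The main analytic obstacle is producing the radial supersolution $U_r$ and controlling $U_r(0)\to 0$ quantitatively: the existence of a blow-up solution on $[0,r)$ is routine for monotone semilinear ODEs, but the decay of $U_r(0)$ requires either explicit Bessel-type comparison or an energy/Pohozaev argument on the linearization. A secondary subtlety, the sign $\phi\ge 0$ on a possibly non-compact surface, is exactly where completeness of $h_\mu$ is used in earnest.
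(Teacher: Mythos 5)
Your proposal is correct in outline, but it is organized differently from the paper, and it reproves more than the paper does. The paper splits the lemma into two steps: it simply \emph{cites} \cite{nie_poles_2023} for the $C^0$ bound $0\leq \phi(x)\leq C\sqrt{r}e^{-\sqrt{6}r}$ (the content of your sign argument plus your radial blow-up supersolution comparison), and then upgrades to $C^1$ with a completely elementary interior gradient estimate on the disk $B_x(r(x)/2)$, obtained by splitting $\phi$ into a Green's-function part controlled by $\|\Delta\phi\|_{C^0}=\|F(\phi)\|_{C^0}$ and a harmonic part controlled by $\|\phi\|_{C^0}$ on the boundary. Your route re-derives the $C^0$ estimate from scratch (Keller--Osserman large solutions on flat disks, comparison principle, linearization), which makes the argument self-contained at the cost of carrying out what is essentially the cited reference; your $C^1$ step via elliptic regularity on a unit disk reaches the same conclusion as the paper's explicit estimate. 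Two small inaccuracies worth fixing: the solution of $U''+U'/s=6U$ that is regular at $s=0$ is $I_0(\sqrt{6}s)$, not $K_0$ ($K_0$ is logarithmically singular at the origin and is what one uses as a decaying supersolution away from the center, which is presumably what you intend for the quantitative decay of $\epsilon_0$); and the Schauder estimate as literally stated is circular, since it requires a H\"older norm of $f(\phi)$ on the right-hand side --- replace it with interior $W^{2,p}$ estimates plus Sobolev embedding, or with the paper's Green's-function gradient bound, either of which gives $|d\phi|(x)\leq C'\|\phi\|_{L^\infty}$ directly because $|f(\phi)|\leq \mathrm{Lip}(f)\,\|\phi\|_{L^\infty}$ on the relevant range. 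Also note that the $h_\mu$-disk $D(x,r(x))$ need not embed in $S$; one should pull $\phi$ back by the exponential map (a local isometry from the Euclidean disk), which is all the comparison argument needs.
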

Wang's equation implies that $\phi$ satisfies the following PDE.
\[\frac12\Delta_{h_\mu}\phi = e^\phi - e^{-2\phi}\]
The intuition is that this equation very much wants to force $\phi$ close to zero, and ellipticity can promote $C^0$ bounds to $C^k$ bounds for whatever $k$ we want. For us $C^1$ will be sufficient.
\begin{proof}
In \cite{nie_poles_2023} it is shown that there are uniform constants $C,r_0$ such that $\phi(x)$ is bounded between $0$ and $\epsilon_{C^0}(r(x))$, where:
\[
\epsilon_{C^0}(r)=\begin{cases}
    C\sqrt{r}e^{-\sqrt {6}r}& \text{if } r\geq r_0\\
    \infty              & \text{otherwise}
\end{cases}
\]
This means in particular that on the disk $B_{r(x)/2}(x)$, $\phi$ is bounded between $0$ and $\epsilon_{C^0}(r(x)/2)$. We now use the following interior gradient estimate. 
\begin{lemma}
If $u$ is a twice differentiable function on the closed disk $\overline{B}_x(R)\subset \RR^2$, then we have the following estimate for its gradient at $x$.
\[|\nabla u (x)|\leq R|\Delta u|_{C^0(B_x(R))} + \frac{2}{\pi R}|u|_{C^0(\del B_x(R))}\]
\end{lemma}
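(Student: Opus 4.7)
The plan is to represent $u$ on $B := B_x(R)$ using the classical Poisson--Green formula for the disk, which cleanly separates the boundary and interior contributions to $\nabla u(x)$. I would write $u = h + w$, where $h$ is the Poisson integral of $u|_{\partial B}$ (so $\Delta h = 0$ on $B$ and $h = u$ on $\partial B$) and $w = u - h$ solves $\Delta w = \Delta u$ with $w = 0$ on $\partial B$. Then $\nabla u(x) = \nabla h(x) + \nabla w(x)$, and each piece can be estimated by an explicit integral kernel, yielding the two terms in the lemma.

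For the harmonic piece, I would differentiate Poisson's formula $h(y) = \frac{1}{2\pi R}\int_{\partial B}\frac{R^2-|y-x|^2}{|y-z|^2}u(z)\,d\sigma(z)$ under the integral sign at $y=x$. A direct computation shows that $\nabla_y P(y,z)|_{y=x}$ has magnitude $\tfrac{1}{\pi R^2}$ along $\partial B$ and, crucially, has \emph{mean zero} as a vector-valued function of $z \in \partial B$. Parametrizing $\partial B$ by $z = x+Re^{i\theta}$ and pairing with a unit vector $e=(\cos\alpha,\sin\alpha)$ reduces the estimate to bounding $\frac{1}{\pi R}\int_0^{2\pi}\cos(\theta-\alpha)u(x+Re^{i\theta})\,d\theta$; subtracting the midrange constant from $u$ before applying the triangle inequality exploits the mean-zero property and produces $|\nabla h(x)| \leq \tfrac{2}{\pi R}\|u\|_{C^0(\partial B)}$.

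For the Newtonian piece, I would use the explicit Dirichlet Green's function on $B$. Its gradient at the evaluation point $y=x$ simplifies dramatically because the pole sits there: writing $G$ in terms of the inversion $z^* = x + R^2(z-x)/|z-x|^2$ and computing gives $|\nabla_y G(y,z)|_{y=x}| = \frac{R^2-|x-z|^2}{2\pi|x-z|R^2}$. Integrating in polar coordinates, $\int_B |\nabla_y G(y,z)|_{y=x}|\,dz = \frac{1}{R^2}\int_0^R(R^2-r^2)\,dr = \tfrac{2R}{3}$, so $|\nabla w(x)| \leq \tfrac{2R}{3}\|\Delta u\|_{C^0(B)} \leq R\|\Delta u\|_{C^0(B)}$. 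Combining the two bounds then gives the lemma.

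The only real obstacle will be numerical bookkeeping: the sharp factor $\tfrac{2}{\pi R}$ on the boundary term requires exploiting the mean-zero property of $\nabla_y P$ at the center rather than the naive sup-norm bound (which would produce $\tfrac{4}{\pi R}$). Conceptually there is nothing subtle beyond evaluating two explicit kernel integrals on a Euclidean disk, and since the lemma is only used to upgrade a $C^0$ bound on $\phi$ to a $C^1$ bound via Wang's equation, one could absorb slightly worse constants without affecting the application.
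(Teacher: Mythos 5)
Your decomposition $u=h+w$ (Poisson integral of the boundary values plus a Green potential of $\Delta u$) is exactly the route the paper sketches, and your Newtonian half is correct: $|\nabla_y G(y,z)|_{y=x}|=\frac{R^2-|x-z|^2}{2\pi|x-z|R^2}$ integrates in polar coordinates to $\frac{2R}{3}$, so $|\nabla w(x)|\leq \frac{2R}{3}\|\Delta u\|_{C^0(B)}\leq R\|\Delta u\|_{C^0(B)}$.

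The harmonic half, however, does not deliver the constant you claim, and this is the one genuine gap. Writing $e\cdot\nabla h(x)=\frac{1}{\pi R}\int_0^{2\pi}\cos(\theta-\alpha)\,u(x+Re^{i\theta})\,d\theta$ and subtracting the midrange constant $c$ of $u|_{\del B}$ gives
\[|e\cdot\nabla h(x)|\;\leq\;\frac{1}{\pi R}\sup_{\del B}|u-c|\int_0^{2\pi}|\cos(\theta-\alpha)|\,d\theta\;=\;\frac{4}{\pi R}\cdot\frac{\operatorname{osc}_{\del B}(u)}{2}\;=\;\frac{2}{\pi R}\operatorname{osc}_{\del B}(u),\]
and since $\operatorname{osc}_{\del B}(u)\leq 2\|u\|_{C^0(\del B)}$ this is again $\frac{4}{\pi R}\|u\|_{C^0(\del B)}$: the mean-zero trick sharpens the bound to one in terms of the oscillation, but it does not halve the bound in terms of the sup norm. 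Indeed no argument can produce $\frac{2}{\pi R}\|u\|_{C^0(\del B)}$, because the constant $\frac{4}{\pi R}$ is sharp: on the unit disk the harmonic functions $h_\rho(\zeta)=\frac{2}{\pi}\arg\frac{1+\rho\zeta}{1-\rho\zeta}$ satisfy $|h_\rho|<1$ yet $|\nabla h_\rho(0)|=\frac{4\rho}{\pi}\to\frac{4}{\pi}$ (the Schwarz-lemma extremal for maps into a strip). So the lemma as printed is itself off by this factor of $2$ in the boundary term; note that when the paper applies it, the coefficient written is $\frac{2n}{\pi r(x)}$, which for $n=2$ is the correct $\frac{4}{\pi r(x)}$. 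None of this matters downstream --- only the existence of some universal constant is needed for $\epsilon_{C^1}(r)\to 0$ --- but your proof should either state the boundary term as $\frac{2}{\pi R}\operatorname{osc}_{\del B}(u)$ or as $\frac{4}{\pi R}\|u\|_{C^0(\del B)}$, not as $\frac{2}{\pi R}\|u\|_{C^0(\del B)}$.
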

This is proved by writing $u$ as a sum of a function which vanishes on the boundary of the disk and has the same laplacian as $u$, and a harmonic function which has the same boundary value as $u$. The gradient of the former can be estimated using its representation in terms of the Green's function for the disk, and the gradient of the latter can be bounded using the maximum principal, and mean value property for harmonic functions. Much more general estimates of this flavor are proved in PDE texts such as \cite{gilbarg_elliptic_2001}.

Apply this to $\phi$ on $B_{x}(r(x)/2)$, recalling that $\Delta \phi = F(\phi)$ where $F(y)=2(e^y-e^{-2y})$.
\[|\nabla\phi(x)| \leq \frac{r(x)}{2}F(\epsilon_{C^0}(r(x)/2)) + \frac{2n}{\pi r(x)} \epsilon_{C^0}(r(x)/2)\]
Note that this gradient estimate goes to zero as $r(x)$ goes to infinity. We simply add the pointwise estimate and the gradient estimate to get the desired $C^1$ estimate.
\[\epsilon_{C^1}(r) := \epsilon_{C^0}(r) + \frac{r}{2}F(\epsilon_{C^0}(r/2)) + \frac{2n}{\pi r} \epsilon_{C^0}(r/2)\]
\end{proof}

\subsection{$F^{DS}$ and $F^\Delta$ coincide for \c{T}i\c{t}eica surfaces}
In this section, we review the fact that the affine sphere corresponding to the constant cubic differential $dz^3$ on the complex plane $\CC$ is a \c{T}i\c{t}eica surface, and we show that the DS metric is exactly the $\Delta$ metric in this case.
\begin{lemma}
\label{titeica coincidence}
	$F^{DS}_{dz^3}=F^\Delta_{dz^3}$
\end{lemma}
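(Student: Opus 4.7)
The plan is to reduce to a single-point computation using the translation symmetry of $(\CC, dz^3)$, exploiting the fact that Wang's equation admits an especially simple flat solution in this case. Since $|dz^3|_{|dz|^2} = 1$ and $|dz|^2$ is flat, $g = |dz|^2$ solves Wang's equation, so the connection $\nabla$ of Section 3 has constant matrix coefficients: $\nabla = d + M_1\, dz + M_2\, d\bar z$, where $M_1, M_2$ are commuting real matrices with common complex eigenbasis $\{f_0, f_1, f_2\}$, $M_1$-eigenvalues $\omega^k$, and $M_2$-eigenvalues $\omega^{-k}$. The developing map takes the closed form $\xi_0(z) = \exp(zM_1 + \bar z M_2)(0,0,1)^T$, and decomposing $(0,0,1)^T$ in the eigenbasis gives
\[ \xi_0(z) = \tfrac{1}{3}\bigl(e^{u_0}f_0 + \omega^2 e^{u_1}f_1 + \omega e^{u_2}f_2\bigr), \qquad u_k = 2Re(\omega^k z). \]

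A short asymptotic analysis of this formula identifies $\Omega \subset \RR\PP^2$ as the open equilateral triangle with vertices $[f_0], [\omega f_2], [\omega^2 f_1]$: as $z \to \infty$ in one of three sectors, one $u_k$ dominates and $[\xi_0(z)]$ tends to the corresponding vertex. In the natural affine chart of $\RR\PP^2$ these vertices are the cube roots of unity $1, \omega, \omega^2$, and $\xi_0(0)$ projects to the centroid. I would then read off the DS unit ball at the centroid directly: $d\xi_0|_0$ sends $\del_x, \del_y$ to the first two standard basis vectors of $\RR^3$, so the affine tangent plane to the affine sphere $S$ at $\tilde x = (0,0,1)^T$ is the horizontal slice $\{c = 1\}$, whose intersection with the cone $\RR_+\bar\Omega$ is exactly $\bar\Omega$ lifted to height $1$. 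Antipodal reflection through $\tilde x$ yields $-\bar\Omega$, so the DS unit ball at $0 \in \Omega$ is the triangle with vertices $-1, -\omega, -\omega^2$.

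Directly from the definition, the $F^\Delta_{dz^3}$ unit ball at $0 \in \CC$ is $\{v \in \CC : 2Re(\omega^k v) \leq 1,\ k = 0, 1, 2\}$, which is the same triangle with vertices $-1, -\omega, -\omega^2$. Thus $F^{DS}_{dz^3} = F^\Delta_{dz^3}$ at the origin. To extend equality to all of $\CC$, I would invoke translation invariance: $F^\Delta_{dz^3}$ is translation-invariant because $dz^3$ is constant, and $F^{DS}_{dz^3}$ is translation-invariant because $z \mapsto z + c$ acts on the developing map by the linear isomorphism $\exp(cM_1 + \bar c M_2)$, which preserves the affine sphere and hence induces a projective automorphism of $\Omega$ preserving the DS structure. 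The main bookkeeping challenge is passing carefully between the complex frame $\{\del_z, \del_{\bar z}, \underline 1\}$, in which the connection has its cleanest form, and a real basis of $\RR^3$, in which the affine sphere actually lives, so that the DS unit ball in $T_0\CC$ is correctly identified with a concrete triangle matching the $F^\Delta$ unit ball.
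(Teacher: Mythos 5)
Your proposal is correct and follows essentially the same route as the paper: exhibit the explicit \c{T}i\c{t}eica developing map by diagonalizing the constant connection (your matrix-exponential formula is the paper's expansion of $\underline{1}$ in the flat eigensections $s_\zeta$), identify the unit ball at the origin with the triangle on the cube roots of unity, and conclude by translation invariance. The only caution is the frame bookkeeping you already flag: $d\xi_0|_0$ sends $\del_z,\del_{\bar z}$ (not $\del_x,\del_y$) to the first two basis vectors of the complexified frame, though this does not affect the identification of the affine tangent plane with the slice $\{c=1\}$.
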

\begin{proof}
The standard hermitian metric $h=dzd\bar{z}$ is the complete solution to Wang's equation. By specializing formula (\ref{connection formula}) to the case $\phi=0$, we get the flat connection
\begin{equation}
\label{nabla0 formula}	
\nabla_0 = d +\begin{bmatrix}
 	0 & 0 & 1 \\
 	1 & 0 & 0 \\
 	0 & 1 & 0 \\
 \end{bmatrix}dz
 +\begin{bmatrix}
 	0 & 1 & 0 \\
 	0 & 0 & 1 \\
 	1 & 0 & 0 \\
 \end{bmatrix}d\bar{z}
\end{equation}

 on $T\CC\oplus \underline{\RR}$, expressed in terms of the frame $(\del_z, \del_{\bar{z}}, \underline{1})$ of the complexification $(T\CC\oplus \RR)\otimes \CC$. One directly checks that for each 3rd root of unity $\zeta$,
 \[
 s_\zeta=\begin{bmatrix}
 	\zeta e^{-2Re(\zeta z)} \\
 	\bar{\zeta} e^{-2Re(\zeta z)} \\
 	e^{-2Re(\zeta z)} \\
 \end{bmatrix}
 \]
 is a flat section. We then express $\underline{1}$ in this flat frame.
 \begin{equation}
 	\label{titeica coordinates}
 \underline{1} = \frac{1}{3}\sum_{\zeta \in \sqrt[3]{1}} e^{2 Re(\zeta z)}s_\zeta
 \end{equation}
Identifying all fibers of $T\CC\oplus \RR$ via $\nabla_0$, the flat frame $\{s_\zeta\}$ collapses to the basis $\{(\zeta,1):\zeta\in \sqrt[3]{1}\}\subset \CC\oplus \RR$, and $\frac{1}{3} e^{2 Re(\zeta z)}$ are the coefficients of the affine sphere developing map in this basis.
\begin{center}
\includegraphics[width=8cm]{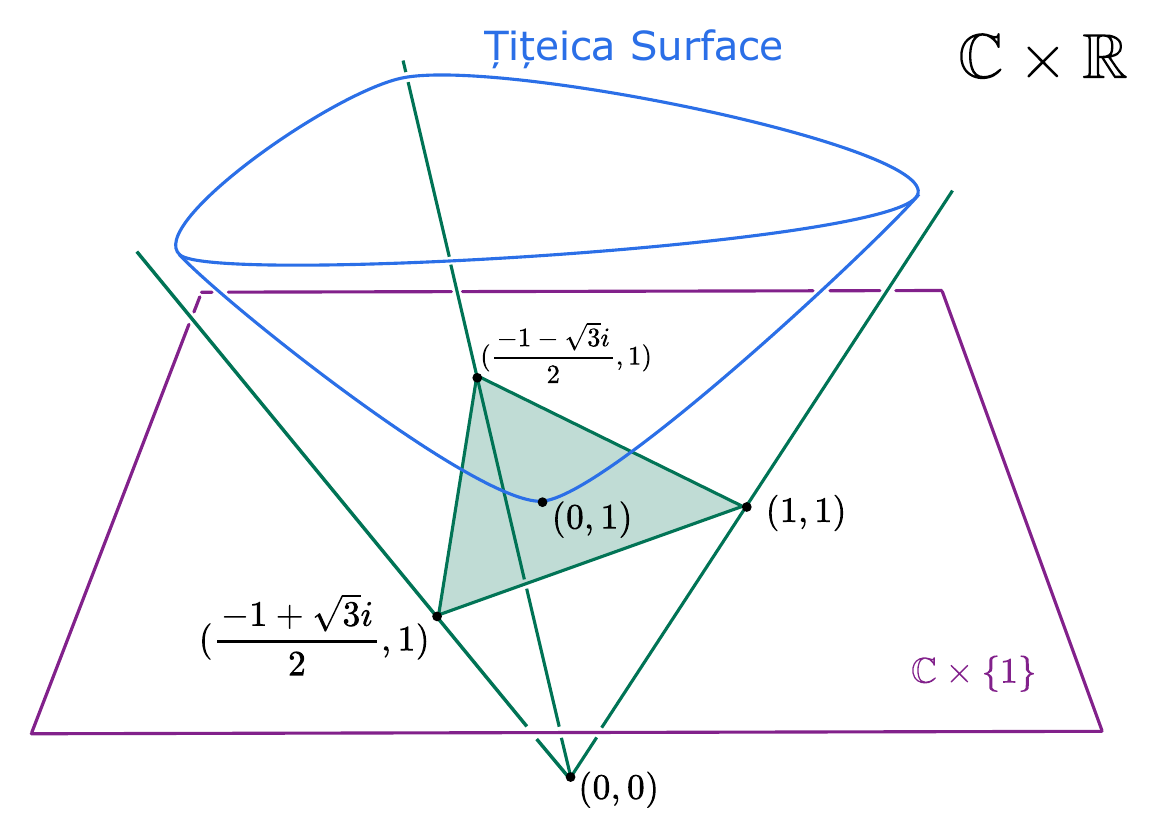}
\end{center}
This particular affine sphere is called a \c{T}i\c{t}eica surface. All we need to notice about it is that its projection to $\CC\times \{1\}$ is indeed the antipodal image of the unit ball for $F^\Delta_{dz^3}$ at $z=0$. Both $F^{DS}_{dz^3}$ and $F^\Delta_{dz^3}$ are translation invariant, so agreement at $z=0$ implies that they are the same everywhere. 
\end{proof}

\subsection{Upper bounds on $F^{DS}$ and $F^\Delta$}
We won't be able to compare $F^{DS}$ and $F^\Delta$ directly. Instead, we will define a families of upper and lower bounds for $F^{DS}$ and $F^\Delta$, and compare these bounds. In this section we define upper bounds $F^{DS,d}>F^{DS}$, and $F^{\Delta,d}>F^\Delta$, and show they are close. In the next section we will use duality arguments to get lower bounds.

The tangent bundle $TS$ embeds into the $\mathbb{RP}^2$ bundle $\PP(TS\oplus \RR)$ as a bundle of affine charts via $v\mapsto [v:1]$. The projectivization of the affine sphere $Im(\xi_x)$, is a convex subset of the affine chart $T_xS \subset \PP(T_xS\oplus \RR)$, and in fact is the antipodal image of the unit ball of $F^{DS}$ at $x$. This suggests a family of upper bounds defined by truncating affine spheres. 

\begin{definition}
	For $d>0$, let $F^{DS,d}$ denote the Finsler metric whose unit ball at $x\in S$ is $-\operatorname{Conv}(\pi(\xi_x(\tilde{B}_{\tilde{x}}(d)))$, where $\tilde{B}_{\tilde{x}}(d)$ denotes the $h_\mu$-ball of radius $d$, centered at $\tilde{x}$ in the universal cover $\tilde{S}_x$, and $\pi$ is the projection from $(T_x S \oplus \RR) \backslash T_x S$ to the affine chart $T_x S\subset \mathbb{P}(T_x S \oplus \RR)$.
\end{definition}

Now we do a similar thing for $F^\Delta$. We let $\mu_0$ and $h_0$ denote the constant cubic differential and hermitian metric on $T_x S$ which agree with $\mu_x$, and $h_x$. Let $\xi_{0,p}: T_x S\to T_x S\oplus \RR$ denote the affine sphere developing map determined by $\mu_0$ and $h_0$.

By lemma $\ref{titeica coincidence}$, the $DS$, and $\Delta$ metrics for the \c{T}i\c{t}eica surface $\xi_{0,x}(T_pS)$ coincide. At the point $x$, this means that the unit ball of $F^\Delta$ at $x$ is $-\pi(\xi_{0,x}(T_pS))$.

\begin{definition}
Let $F^{\Delta,d}$ denote the Finsler metric whose unit ball at $p$ is $-\operatorname{Conv}(\pi(\xi_{0,p}(T^{\leq d}_pS)))$ where $T^{\leq d}_pS$ denotes the ball of radius $d$ in $T_p S$. 
\end{definition}
Note that by construction, $F^{\Delta,d}$ converges to $F^\Delta$. We can thus find a function $\epsilon_{\Delta,d}$, which goes to zero as $d$ goes to infinity, such that 
$\log(F^{\Delta,d}_\mu/F^{\Delta}_\mu) < \epsilon_{\Delta,d}$
for all $d$, for any Riemann surface with cubic differential, on the complement of the zeros.

\subsection{Closeness of truncated affine spheres}
The next step is to show that, far from zeros, $F^{DS,d}$ is close to $F^{\Delta,d}$. This will follow from showing the two affine sphere developing maps $\xi_p$ and $\xi_{p,0}$ are close on the ball of radius $d$. This will give us a bound on the Hausdorff distance between unit balls of $F^{DS,d}$ and $F^{\Delta,d}$. We will then have to prove a simple lemma relating ratios between norms, and Hausdorff distances between their unit balls.

\begin{lemma}
There is a function $\epsilon_{d,Haus}:(0,\infty)\to (0,\infty]$ limiting to zero as the argument goes to infinity, such that for a Riemann surface $S$ with cubic differential $\mu$, the Hausdorff distance, with respect to $h_\mu$, between the unit balls of $F^{DS}_\mu$ and $F^\Delta_\mu$ at any point $p$, is bounded above by $\epsilon_{d,Haus}(r(p))$.
\end{lemma}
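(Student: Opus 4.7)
The plan is to reduce the Hausdorff bound to a $C^0$-closeness statement for the two affine sphere developing maps $\xi_p$ and $\xi_{0,p}$ on the $h_\mu$-ball of radius $d$ centered at $\tilde p$, and then push this closeness through the projection $\pi : (v,t) \mapsto v/t$ from $(T_pS \oplus \RR)\setminus T_pS$ to the affine chart $T_pS$. Provided $r(p) > d$, the ball $\tilde B_{\tilde p}(d)$ embeds isometrically into a flat chart for the $1/3$-translation structure in which $\mu = dz^3$, and the $h_\mu$-exponential map canonically identifies $\tilde B_{\tilde p}(d)$ with $T_p^{\leq d} S$. Along this identification I can compare $\xi_p$ and $\xi_{0,p}$ pointwise.

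The heart of the argument is the combination of the preceding Blaschke metric estimate with the explicit connection formula (\ref{connection formula}) and Gronwall's inequality. On $\tilde B_{\tilde p}(d)$ the previous lemma gives $|\phi| + |d\phi|_{h_\mu} \leq \epsilon_{C^1}(r(p) - d)$. Plugging this into (\ref{connection formula}) shows that the coefficients of $\nabla$ depend continuously on $\phi$ and $d\phi$ and reduce to those of $\nabla_0$ in (\ref{nabla0 formula}) at $\phi = 0$, so in the chosen chart $\nabla - \nabla_0$ has coefficient norm bounded by $\eta(\epsilon_{C^1}(r(p)-d))$ for some continuous $\eta$ with $\eta(0) = 0$. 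Since both $\xi_p$ and $\xi_{0,p}$ are obtained by parallel transporting the common section $\underline 1$ back to $\tilde p$ along $h_\mu$-geodesics of length at most $d$, a Gronwall estimate for the linear transport ODE yields
\[ \sup_{q \in \tilde B_{\tilde p}(d)} |\xi_p(q) - \xi_{0,p}(q)| \leq C(d)\,\eta(\epsilon_{C^1}(r(p)-d)), \]
which tends to $0$ as $r(p) \to \infty$ with $d$ fixed.

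To finish, I need this $C^0$-closeness in $T_pS \oplus \RR$ to pass through $\pi$ to a Hausdorff bound in $T_pS$. By formula (\ref{titeica coordinates}), the $\RR$-component of $\xi_{0,p}$ is the strictly positive \c{T}i\c{t}eica function $\tfrac13 \sum_{\zeta^3 = 1} e^{2\operatorname{Re}(\zeta z)}$, which is bounded below by some $c_d > 0$ on $T_p^{\leq d} S$. For $r(p)$ large enough that the Gronwall estimate above is less than $c_d/2$, the $\RR$-component of $\xi_p$ also exceeds $c_d/2$ throughout $\tilde B_{\tilde p}(d)$, so $\pi$ is Lipschitz with a uniform constant depending only on $d$ on the common target region. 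Hence $\pi \circ \xi_p$ and $\pi \circ \xi_{0,p}$ are $C^0$-close on $\tilde B_{\tilde p}(d) \cong T_p^{\leq d} S$, and taking convex hulls (a $1$-Lipschitz operation for Hausdorff distance) and negating gives the required Hausdorff bound between the unit balls of $F^{DS,d}_\mu$ and $F^{\Delta,d}_\mu$. I set $\epsilon_{d,\mathrm{Haus}}(r) := C(d)\,\eta(\epsilon_{C^1}(r-d))/c_d$, with the convention that it equals $+\infty$ for $r \leq d$.

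The main obstacle is maintaining uniform control over the projection $\pi$: its Lipschitz constant blows up as the $\RR$-component of $\xi_p$ approaches zero, i.e.\ as the affine sphere approaches the affine chart hyperplane within $\tilde B_{\tilde p}(d)$. The \c{T}i\c{t}eica lower bound supplied by (\ref{titeica coordinates}) handles this for the model $\xi_{0,p}$ uniformly in $S$ and $\mu$, and the Gronwall estimate transfers it to $\xi_p$ once $r(p)$ is large enough relative to $d$. Once this is in place, everything else is a tidy chain of elementary linear-ODE and convex-geometry estimates that only depends on $d$ and $r(p)$, as required for uniformity across all closed Riemann surfaces with cubic differential.
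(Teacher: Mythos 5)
Your proposal is correct and follows essentially the same route as the paper: a Gronwall comparison of the parallel-transport ODEs for $\nabla$ and $\nabla_0$ using the $C^1$ estimate on $\phi$, followed by pushing the resulting $C^0$ bound on $\xi_p-\xi_{0,p}$ through the radial projection $\pi$ to get a Hausdorff bound on the (truncated) unit balls. Your treatment of the projection step is if anything slightly more careful than the paper's, which simply asserts that $\pi$ is contracting on the region where the affine spheres live rather than extracting an explicit positive lower bound on the $\underline{1}$-component from the \c{T}i\c{t}eica formula.
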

\begin{proof}
We define $\epsilon_{d,Haus}(r)$ to be $\infty$ for $r \leq d$, so we can assume that $p$ is distance at least $d$ from zeros. Let $B(p,d)$ denote the ball centered at $p$ of radius $d$ with respect to $h$. We have two solutions to Wang's equation on $B(p,d)$: $g$, and $h$. These give rise to two affine sphere developing maps $\xi_p, \xi_{0,p}: B(p,d) \to T_p S\oplus \RR$ which are constructed by parallel transporting the section $\underline{1}$ back to the fiber over $p$, via two different connections $\nabla_0$, and $\nabla$. As we did for $\nabla_0$ in equation \ref{nabla0 formula}, we can write an explicit formula for $\nabla$ by choosing a coordinate $z$ on $B(p,d)$ which takes $\mu$ to $dz^3$, and using the frame $\del_z, \delbar_z, \underline{1}$ of the complexification of $TS\oplus \RR$. In this frame, we have the following formula for $\nabla$ in terms of $\phi = \log(g/h)$.
 \[
\nabla = d+\begin{bmatrix}
 	0 & 0 & 1 \\
 	e^{-\phi} & \del \phi & 0 \\
 	0 & e^\phi & 0 \\
 \end{bmatrix}dz
 +\begin{bmatrix}
 	\delbar\phi & e^{-\phi} & 0 \\
 	0 & 0 & 1 \\
 	e^\phi & 0 & 0 \\
 \end{bmatrix}d\bar{z}
\]
Let $A_0$ and $A$ be the matrix valued $1$ forms representing $\nabla_0$ and $\nabla$ in this frame: $\nabla_0=d+A_0$, $\nabla=d+A$. 
To conclude closeness of $\xi_p$ and $\xi_{0,p}$ from closeness of $\nabla$ and $\nabla_0$, we need the following standard consequence of Gronwall's inequality. 

\begin{lemma}
Suppose $f' = Af$ and $g'= B g$ where $A,B\in C^0([0,t],\RR^{n\times n})$ are matrix valued functions, and $f,g\in C^1([0,t],\RR^n)$ are vector valued functions with $f(0)=g(0)$. Then $|g-f|$ has the following bound.
\[|g(t)-f(t)|\leq |B-A|_{C^0}|f|_{C^0} t e^{t|B|_{C^0}}\]
\end{lemma}

Let $q\in B(p,d)$. Applying Gronwalls inequality to the restriction of $\nabla$ and $\nabla_0$ to the straight line segment from $q$ to $p$ gives the following.
 \[|\xi_p(q)-\xi_{0,p}(q)|\leq |A-A_0|_{C^0}|\xi_{0,p}|_{C^0} de^{d|A|_{C^0}}\]
 $|A-A_0|$ is bounded by $C\epsilon_{C^1}(r)$ for some fixed constant $C$. By equation \ref{titeica coordinates}, $|\xi_{0,p}|$ is bounded by $\frac13 e^{4d}$. $|A|$ is bounded by $|A_0|+C\epsilon_{C^1}(r)$. We get
 \[|\xi_p(q)-\xi_{0,p}(q)|\leq C \epsilon_{C^1}(r) \frac13 e^{4d} d e^{d(|A_0|+C\epsilon_{C^1}(r))}\]
Define $\epsilon_d(r)$ to be the right hand side of this inequality, and note that it indeed goes to zero as $r$ goes to infinity for any fixed $d$ which is large enough that $\epsilon_{C^1}$ is finite. This bound persists after projecting to $T_p S$, and negating to get the unit balls of $F^{\Delta,d}$, and $F^{DS,d}$. This is because, by convexity, $\xi_0$ and $\xi$ are valued in $\{(v,y)\subset TS\oplus \RR \;|\; y\geq 0\}$, a region in which the radial projection $\pi$ to $T_p S$ is contracting. This bound on $|\pi(\xi_p(q))-\pi(\xi_{0,p}(q))|$ gives the same bound on the Hausdorff distance between unit balls of $F^{DS,d}(p)$ and $F^{\Delta,d}(p)$. 
\end{proof}

\begin{definition}
Let $\Omega_1,\Omega_2\subset \RR^n$ be convex sets containing the origin. We can describe $\Omega_1$ and $\Omega_2$ as the unit balls for norms $f_1$ and $f_2$ on $\RR^n$. Let the radial distance, $d_R(\Omega_1,\Omega_2)$ denote the supremum of $|\log(f_1/f_2)|$. 
\end{definition}
Note that $f_1/f_2$ is the scaling factor which takes the boundary of $\Omega_1$ onto the boundary of $\Omega_2$.

\begin{lemma}
\label{hausdorff-radial}
Let $\Omega_1,\Omega_2\subset \RR^n$ be convex sets containing the origin. Let $d_H(\Omega_1,\Omega_2)$ denote the Hausdorff distance. We have
\[rd_R(\Omega_1,\Omega_2) \leq d_H(\Omega_1,\Omega_2)\]
where $r$ is the radius of a ball centered at $0$ contained in both $\Omega_1$ and $\Omega_2$.
\end{lemma}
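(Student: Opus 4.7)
The plan is to reduce the inequality to a direction-by-direction bound: for each unit vector $v$, I will show $d_H(\Omega_1,\Omega_2) \geq r\bigl|\log(f_1(v)/f_2(v))\bigr|$ by using a supporting hyperplane of the "smaller" domain at the boundary point in direction $v$. Taking the supremum over $v$ then yields $r \cdot d_R \leq d_H$.

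Fix a unit vector $v$ and, without loss of generality, assume $f_1(v) \geq f_2(v)$. Set $p_1 := v/f_1(v) \in \del\Omega_1$ and $p_2 := v/f_2(v) \in \del\Omega_2$; then $p_2 = (f_1(v)/f_2(v))\,p_1$ lies on the ray from the origin through $p_1$, farther from the origin, so $p_2$ is not in the interior of $\Omega_1$. By convexity of $\Omega_1$, I may choose a supporting hyperplane $H = \{x : n \cdot x = n \cdot p_1\}$ at $p_1$, with unit outward normal $n$ and $\Omega_1 \subset H^- := \{x : n \cdot x \leq n \cdot p_1\}$. Since $B_r \subset \Omega_1 \subset H^-$, maximizing $n \cdot y$ over $y \in B_r$ yields $r \leq n \cdot p_1 = (n \cdot v)/f_1(v)$, hence $n \cdot v \geq r\,f_1(v)$.

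The signed distance of $p_2$ from $H$ is now
\[
(n \cdot v)\bigl(1/f_2(v) - 1/f_1(v)\bigr) \;\geq\; r\,f_1(v) \bigl(1/f_2(v) - 1/f_1(v)\bigr) \;=\; r\bigl(f_1(v)/f_2(v) - 1\bigr).
\]
Since $\Omega_1 \subset H^-$ while $p_2$ lies in the opposite half-space, any path from $p_2$ into $\Omega_1$ must cross $H$, so $d(p_2, \Omega_1)$ is at least the quantity above. Combined with $p_2 \in \Omega_2$ (which gives $d_H \geq d(p_2,\Omega_1)$) and the elementary inequality $t - 1 \geq \log t$, this yields $d_H \geq r(f_1(v)/f_2(v) - 1) \geq r \log(f_1(v)/f_2(v))$. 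The symmetric case $f_2(v) \geq f_1(v)$ is identical with the indices swapped, so taking the supremum over unit $v$ delivers the claim.

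There is no real obstacle here; the proof is elementary once the right supporting hyperplane is chosen. The hypothesis $B_r \subset \Omega_1$ is exactly what converts the radial gap $f_1(v)/f_2(v) - 1$ into a Euclidean gap of size $r\bigl(f_1(v)/f_2(v) - 1\bigr)$, and the slack in the inequality $t - 1 \geq \log t$ is precisely what makes the bound sharp in the concentric-ball limit $f_1/f_2 \to 1$.
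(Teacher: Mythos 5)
Your proof is correct and follows essentially the same route as the paper's: both hinge on taking a supporting hyperplane of $\Omega_1$ at the boundary point in the given direction, using $B_r\subset\Omega_1$ to get $n\cdot p_1\geq r$, and converting the radial ratio to a Euclidean gap via $t-1\geq\log t$. The only (harmless) difference is that you argue direction-by-direction and bound $d(p_2,\Omega_1)$ directly by the distance from $p_2$ to that hyperplane, whereas the paper first passes to an extremal direction and shows the rescaled hyperplane supports $\Omega_2$ --- a fact it actually needs later, in Lemma \ref{radial-dual}, but which your argument rightly observes is not needed here.
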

\begin{proof}
For $x\in \del \Omega_1$, let $\alpha(x) := f_1(x)/f_2(x)$ be the scaling factor necessary such that $\alpha x\in \del\Omega_2$. Let $x\in \del{\Omega_1}$ be a point realizing the supremum of $|\log(\alpha(x))|$ and, without loss of generality, assume $\alpha(x)\geq 1$. Suppose we have a supporting hyperplane $H_x$ for $\Omega_1$ at $x$. We argue that $H_y=\alpha(x) H_x$ must be a supporting hyperplane for $\Omega_2$ at $y$. Indeed, suppose that there is $y'\in \Omega_2$ on the other side of $H_y$ from $0$. Let $x'$ be the point where the line from $0$ to $z$ intersects $\del \Omega^1$, one sees that $\alpha(x')>\alpha(x)$, a contradiction. It follows that there is no $y'$ on the other side of $H_y$, so $H_y$ is a supporting hyperplane for $\Omega_2$. The Hausdorff distance is at least the euclidean distance from $y$ to $\Omega_1$ which is at least the distance between $H_y$ and $H_x$, which is at least $(\alpha-1)r$. 
\[d_H(\Omega_1,\Omega_2) \geq (\alpha-1)r \geq \log(\alpha)r \geq rd_R(\Omega_1,\Omega_2)\]
\end{proof}

If we assume $d$ is above some threshold $d_0$, we then the unit balls of $F^\Delta_\mu$ will contain the unit balls of $r_0^{-1} h_\mu$ for some $r_0$. Setting $\epsilon_d(r) = \infty$ for $d\leq d_0$, and $\epsilon_d(r) = r_0^{-1}\epsilon_{d,Haus}(r)$ for $d>d_0$ we get our desired control over the ratio between $F^{\Delta,d}$ and $F^{DS,d}$.
\begin{lemma}
There is a function $\epsilon_d:(0,\infty)\to (0,\infty]$ limiting to zero as the argument goes to infinity, such that for a Riemann surface $S$ with cubic differential $\mu$,
\[|\log \frac{F^{DS,d}_\mu}{F^{\Delta,d}_\mu}|\leq \epsilon_d(r(p))\]
\end{lemma}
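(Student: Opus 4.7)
The plan is to feed the Hausdorff bound from the previous lemma into Lemma \ref{hausdorff-radial}. The previous lemma already provides $d_H(B_p^{DS,d}, B_p^{\Delta,d}) \leq \epsilon_{d,Haus}(r(p))$, where $B_p^{DS,d}$ and $B_p^{\Delta,d}$ denote the unit balls of $F^{DS,d}_\mu$ and $F^{\Delta,d}_\mu$ at $p$ inside $T_pS$, measured in the euclidean structure on $T_pS$ induced by $h_\mu$. Lemma \ref{hausdorff-radial} converts Hausdorff distance into radial distance (i.e.\ the sup of $|\log|$ of the pointwise norm ratio) at a cost of dividing by a common inradius $r_0$, so once we have such an inradius, the lemma follows essentially immediately.

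The key remaining step is producing a uniform lower bound on the inradius of the $B_p^{\Delta,d}$. By construction $F^{\Delta,d}_\mu$ depends only on the pointwise values $\mu_p$ and $h_{\mu,p}$, and after an $h_\mu$-isometric identification of $(T_pS,\mu_p)$ with $(\CC,dz^3|_0)$, the ball $B_p^{\Delta,d}$ is identified with a fixed model convex set $B_0^{\Delta,d}\subset\CC$ depending only on $d$. These model sets are monotone increasing in $d$ and, as $d\to\infty$, exhaust the unit ball of $F^\Delta_{dz^3}$ (by Lemma \ref{titeica coincidence} this is the antipodal image of the standard \c{T}i\c{t}eica projection), which is a triangle with nonempty interior. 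Hence there exists a threshold $d_0$ and a radius $r_0>0$ such that every $B_p^{\Delta,d}$ with $d>d_0$ contains the euclidean $h_\mu$-ball of radius $r_0$ centered at the origin.

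Having secured this inradius, whenever $d>d_0$ and $\epsilon_{d,Haus}(r(p))\leq r_0/2$, the ball $B_p^{DS,d}$ also contains the $h_\mu$-ball of radius $r_0/2$, and Lemma \ref{hausdorff-radial} yields
\[\Bigl|\log \tfrac{F^{DS,d}_\mu(p,v)}{F^{\Delta,d}_\mu(p,v)}\Bigr| \;\leq\; d_R(B_p^{DS,d},B_p^{\Delta,d}) \;\leq\; \frac{2}{r_0}\,d_H(B_p^{DS,d},B_p^{\Delta,d}) \;\leq\; \frac{2}{r_0}\,\epsilon_{d,Haus}(r(p)).\]
For all remaining regimes (namely $d\leq d_0$ or $\epsilon_{d,Haus}(r(p))>r_0/2$) we simply set $\epsilon_d(r)=\infty$. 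Defining
\[\epsilon_d(r) := \begin{cases} \tfrac{2}{r_0}\,\epsilon_{d,Haus}(r) & \text{if } d>d_0 \text{ and } \epsilon_{d,Haus}(r)\leq r_0/2,\\ \infty & \text{otherwise,}\end{cases}\]
we obtain a function which, for each fixed $d>d_0$, is finite for all sufficiently large $r$ and tends to $0$ as $r\to\infty$, since $\epsilon_{d,Haus}$ does. The only substantive point is the uniform inradius; this is not deep once one sees the translation invariance of the \c{T}i\c{t}eica model, but it is the step that forces the $\infty$ caveat in $\epsilon_d$ and thereby dictates the order of quantifiers in the final uniform-closeness theorem.
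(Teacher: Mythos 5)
Your proposal is correct and follows essentially the same route as the paper: both feed the Hausdorff bound $\epsilon_{d,Haus}$ into Lemma \ref{hausdorff-radial} after securing a uniform inradius $r_0$ for the truncated \c{T}i\c{t}eica balls once $d$ exceeds a threshold $d_0$, setting $\epsilon_d=\infty$ otherwise. Your extra care in requiring $\epsilon_{d,Haus}(r)\leq r_0/2$ so that \emph{both} balls contain a common inball (as Lemma \ref{hausdorff-radial} actually demands) is a small but legitimate refinement of the paper's more terse statement.
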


\subsection{Lower bounds on $F^{\Delta}$ and $F^{DS}$}
\label{lower bounds}
Next, we will construct a family of lower bounds, using  projective duality ideas. 

\begin{lemma}
	On a Riemann surface with cubic differential $\mu$, inducing flat metric $h$, we have \[(F^{\Delta}_\mu)^{*_{2h}} = F^{\Delta}_{-\mu}\] on the complement of zeros.
\end{lemma}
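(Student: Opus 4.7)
The statement is a pointwise identity on the complement of the zeros of $\mu$, so the plan is to reduce it to a single-point linear algebra computation in a coordinate chart adapted to $\mu$. At any point $x$ with $\mu(x)\neq 0$, I would pick a local holomorphic coordinate $z$ vanishing at $x$ in which $\mu=dz^3$. In such a flat coordinate the singular metric $h_\mu$ is precisely the standard Euclidean metric on $T_xC$, which under the identification $\CC\cong T_xC$ (sending $1\mapsto\partial_x,\ i\mapsto\partial_y$) is given by $h_\mu(w,v)=\operatorname{Re}(\bar w v)$.

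In these coordinates the problem becomes explicit. Writing $\omega=e^{2\pi i/3}$, the cube roots of $\mu$ correspond to the forms $\zeta\,dz$ with $\zeta^3=1$ and the cube roots of $-\mu$ to $\eta\,dz$ with $\eta^3=-1$, so
\[
F^\Delta_\mu(v)=\max_{\zeta^3=1}2\operatorname{Re}(\zeta v),\qquad F^\Delta_{-\mu}(v)=\max_{\eta^3=-1}2\operatorname{Re}(\eta v).
\]
The unit ball $B_\mu$ of $F^\Delta_\mu$ is therefore the triangle cut out by the three half-planes $\{v:2\operatorname{Re}(\zeta v)\le 1\}$. I would then invoke the elementary polar-duality principle that the $g$-dual of a polytope $\bigcap_i\{\ell_i\le 1\}$ is the convex hull of the $g$-metric-duals of the defining functionals. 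With $g=2h_\mu$, a direct check gives $\ell_\zeta^{\sharp_{2h_\mu}}=\bar\zeta$, and since complex conjugation permutes the cube roots of unity,
\[
B_\mu^{*_{2h_\mu}}=\operatorname{Conv}\{\bar\zeta:\zeta^3=1\}=\operatorname{Conv}\{1,\omega,\omega^2\}.
\]

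To finish, I would identify this triangle with the unit ball of $F^\Delta_{-\mu}$ by intersecting the three constraint lines $2\operatorname{Re}(\eta v)=1$ for $\eta^3=-1$; the three vertices that appear are exactly $1,\omega,\omega^2$. The structural reason this identification produces $-\mu$ rather than $\mu$ on the right is that for a regular triangular unit ball the outward edge-normals and the vertex rays are related by negation in $\CC$, and this is precisely the map that swaps the cube roots of $+1$ with the cube roots of $-1$.

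I do not anticipate any serious obstacle here; the argument is linear algebra at a single point. The only real care needed is normalization bookkeeping: the factor of $2$ in the metric $2h_\mu$ is exactly what absorbs the factor of $2$ in the definition of $F^\Delta$, giving an on-the-nose equality rather than one up to a positive scalar.
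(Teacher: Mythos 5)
Your proposal is correct and matches the paper's approach: the paper likewise reduces to the local model $\mu=dz^3$ and declares the remaining verification an easy computation, which you carry out explicitly (and correctly, including the normalization whereby the factor of $2$ in $2h_\mu$ cancels the factor of $2$ in the definition of $F^\Delta$). The only thing the paper adds is the remark that the identity can alternatively be deduced as a special case of the subsequent duality lemma for domain shape metrics applied to the \c{T}i\c{t}eica surface.
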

\begin{proof}
Around any point, there is a local coordinate $z$ such that $\mu=dz^{3}$, so it suffices to treat the case of of $\mu=dz^3$ on the complex plane. This is an easy computation. Alternatively, it is a special case of the next lemma.
\end{proof}

When we have a Riemann surface with cubic differential $\mu$, and $g$ satisfying $\kappa_g = |\mu|_g^2 -1$, the Blaschke metric for the corresponding affine sphere is $2g$. The dual affine sphere is given by replacing $\mu$ with $-\mu$ in the formula for the connection. It follows from lemma \ref{DS duality} that negating the cubic differential corresponds to dualizing the domain shape metric with respect to $2g$.

\begin{lemma}
If $S$ is a Riemann surface with cubic differential $\mu$, and $g$ is a complete solution to Wang's equation, then $(F^{DS}_\mu)^{*_{2g}} = F^{DS}_{-\mu}$ where $g$ is the complete solution to Wang's equation. 
\end{lemma}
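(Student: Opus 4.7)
The plan is to apply Lemma \ref{DS duality} to the Labourie-Loftin affine sphere $\Sigma_\mu \subset T_xS \oplus \RR$ corresponding to the data $(J, \mu, g)$. That lemma asserts $F^{DS}_{\Sigma^*} = (F^{DS}_\Sigma)^{*_g}$, where $g$ is the Riemannian metric built from the duality pairing in Section 2.1. Two identifications reduce our claim to that one. First, for an affine sphere the metric $g$ of Section 2.1 coincides with the Blaschke metric, since the condition $v = x$ characterizing affine spheres is exactly what makes the expressions $\langle \alpha, \partial_i\partial_j x\rangle$ and $\langle \partial_i\partial_j\alpha, v\rangle$ agree. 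Second, the paragraph immediately preceding our lemma records the two key facts: (a) the Blaschke metric of the affine sphere arising from $(J,\mu,g)$ is $2g$, where $g$ solves Wang's equation, and (b) the dual of the affine sphere for cubic differential $\mu$ is the affine sphere for $-\mu$.

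Granted (a) and (b), the conclusion follows by substitution. Writing $\Sigma = \Sigma_\mu$, so that $F^{DS}_\Sigma = F^{DS}_\mu$ and $\Sigma^* = \Sigma_{-\mu}$, Lemma \ref{DS duality} gives
\[F^{DS}_{-\mu} \;=\; F^{DS}_{\Sigma^*} \;=\; (F^{DS}_\Sigma)^{*_{2g}} \;=\; (F^{DS}_\mu)^{*_{2g}}.\]

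The only nontrivial ingredient is (b), which the paper asserts without proof and which I would therefore verify explicitly. The method I would pursue is to show that the dual flat connection $d - A^T$ on the dual bundle $V^* = (TS \oplus \underline{\RR})^*$ is gauge equivalent to $\nabla^{-\mu}$ on $TS \oplus \underline{\RR}$. Since $\nabla^\mu$ has $\operatorname{SL}_3\RR$ holonomy, it preserves a volume form on $V$, which identifies $V^*$ with $\wedge^2 V$; combining this with the Blaschke-metric identification $TS^* \cong \overline{TS}$ produces an explicit line decomposition of $V^*$ matching that of $V$. In this frame one checks by direct matrix manipulation with the explicit formula for $\nabla^\mu$ that $-A^T$ becomes the connection form for $\nabla^{-\mu}$. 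A more synthetic alternative, which I would prefer if the computation becomes messy, is to invoke the classical identity that the Pick cubic form of the projective dual of a hyperbolic affine sphere equals the negative of the original Pick form, together with the fact that the Pick form of the Labourie-Loftin sphere is precisely $\mu$. The main obstacle throughout is purely bookkeeping: keeping straight the identifications $V^* \cong \wedge^2 V$ and $TS^* \cong \overline{TS}$ so that the dual affine sphere actually lands in the same ambient space as the original.
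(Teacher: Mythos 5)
Your proposal follows the paper's own route exactly: the paper derives this lemma from Lemma \ref{DS duality} together with the two facts you label (a) and (b), both of which it simply asserts in the preceding paragraph. Your additional sketch of how to verify (b) — via the dual connection $d - A^T$ or the classical sign-reversal of the Pick form under projective duality — goes beyond what the paper provides, but the argument itself is the same.
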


Equivalently, we have $F^{DS}_\mu = (F^{DS}_{-\mu})^{*_{2g}}$. Since taking duals reverses containment of convex sets, $F^{DS}_{-\mu} < F^{DS,d}_{-\mu}$ implies $F^{DS}_{\mu} > (F^{DS,d}_{-\mu})^{*_{2g}}$. This is our desired lower bound. We will need to show that it is close to $F^{\Delta}_\mu$ just like our upper bound. In the last subsection we showed that $F^{DS,d}$ and $F^{\Delta,d}$ are close. The following lemma implies that their duals, with respect to $2g$, are the same amount close. 

\begin{lemma}
\label{radial-dual}
Taking dual convex sets is an isometry for $d_R$.
\[d_R(\Omega_1,\Omega_2) = d_R(\Omega_1^*,\Omega_2^*)\]
\end{lemma}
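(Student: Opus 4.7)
The plan is to reduce the symmetric statement to a one-sided comparison, then use the standard inclusion-reversing property of polarity. Define the asymmetric quantity
\[
\rho(\Omega_1,\Omega_2) := \log \sup_{v\neq 0} \frac{f_1(v)}{f_2(v)},
\]
so that $d_R(\Omega_1,\Omega_2) = \max\bigl(\rho(\Omega_1,\Omega_2),\,\rho(\Omega_2,\Omega_1)\bigr)$, using that for any positive function $h$, $\sup |\log h| = \max(\log \sup h,\ -\log \inf h) = \max(\log\sup h,\ \log \sup 1/h)$. The lemma will then follow immediately from the single identity
\[
\rho(\Omega_1,\Omega_2) = \rho(\Omega_2^{*},\Omega_1^{*}),
\]
applied twice (once with the roles of $\Omega_1,\Omega_2$ swapped).

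To establish that identity, I would argue by translating each side into an inclusion statement. For $c>0$, the inequality $f_1 \leq c f_2$ pointwise is equivalent to $c^{-1}\Omega_2 \subseteq \Omega_1$. Taking polar duals reverses inclusions and scales by the reciprocal, yielding $\Omega_1^{*} \subseteq c\,\Omega_2^{*}$, which is equivalent to $f_2^{*} \leq c f_1^{*}$ pointwise. Thus the set of constants $c$ for which $f_1 \leq c f_2$ holds is exactly the set of constants for which $f_2^{*} \leq c f_1^{*}$ holds, and taking the infimum of that set on each side gives $\sup(f_1/f_2) = \sup(f_2^{*}/f_1^{*})$. Taking logarithms gives the asymmetric identity.

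Combining, we get
\[
d_R(\Omega_1,\Omega_2) = \max\bigl(\rho(\Omega_1,\Omega_2),\rho(\Omega_2,\Omega_1)\bigr) = \max\bigl(\rho(\Omega_2^{*},\Omega_1^{*}),\rho(\Omega_1^{*},\Omega_2^{*})\bigr) = d_R(\Omega_1^{*},\Omega_2^{*}).
\]

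There is essentially no serious obstacle here; the only thing to be careful about is to apply polar duality with respect to the same inner product (or metric) that is implicit in the duality $f \mapsto f^{*}$, so that the passage from $f_1 \leq c f_2$ to $f_2^{*} \leq c f_1^{*}$ is a genuine equivalence. Strictly speaking one should also verify that the supremum defining $\rho$ is attained (or well-approximated) so the inclusion characterization is sharp; this is immediate since $f_1/f_2$ is positively homogeneous of degree zero and continuous on $\mathbb{R}^n\setminus\{0\}$, hence attains its supremum on the compact set $\partial \Omega_2$.
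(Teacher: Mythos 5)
Your proof is correct, but it takes a different route from the one in the paper. The paper argues pointwise on the boundary: it picks a point $x\in\del\Omega_1$ realizing the supremum of $|\log\alpha|$, uses the supporting-hyperplane observation from the proof of Lemma \ref{hausdorff-radial} to show that the scaled hyperplane $\alpha(x)H_x$ supports $\Omega_2$, identifies $\del\Omega_1^*$ with supporting hyperplanes of $\Omega_1$, and so produces a dual point where the dual ratio is at least $\alpha$; the reverse inequality then comes from duality being an involution. You instead split $d_R$ into its two asymmetric halves $\rho(\Omega_1,\Omega_2)$ and $\rho(\Omega_2,\Omega_1)$ and show that polarity swaps them, by reading $\sup(f_1/f_2)$ as the optimal constant in the inclusion $c^{-1}\Omega_2\subseteq\Omega_1$ and using that polarity reverses inclusions and inverts scaling. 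Your version is cleaner in two respects: it never needs the supremum to be attained or the extremal-point/supporting-hyperplane argument (which in the paper is only justified at the extremizer), and it makes visible the stronger asymmetric identity $\rho(\Omega_1,\Omega_2)=\rho(\Omega_2^*,\Omega_1^*)$, of which the lemma is the symmetrization. The one thing to make explicit is that the "exactly the same set of constants" step uses the bipolar identity $\Omega^{**}=\Omega$ (equivalently, you can prove only the implication $f_1\leq cf_2\Rightarrow f_2^*\leq cf_1^*$ and recover the converse by applying it to the duals), and, as you note, that the polarity is taken with respect to the fixed metric $2g$ appearing in the paper's application so that $f\mapsto f^*$ is the gauge of the polar body under that identification of $V$ with $V^*$.
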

\begin{proof}
For $x\in \del \Omega_1$, let $\alpha(x) := f_1(x)/f_2(x)$ be the scaling factor necessary such that $\alpha x\in \del\Omega_2$. Let $x\in \del{\Omega_1}$ realize the supremum of $\alpha$. By definition, $d_R(\Omega_1,\Omega_2) = |\log(\alpha)|$. Without loss of generality, assume $\alpha\geq 1$. As in the proof of Lemma \ref{hausdorff-radial}, if $H_x$ is a supporting hyperplane for $\Omega_1$ at $x$, then $H_y$ is a supporting hyperplane for $\Omega_2$.

Note that $\del\Omega_1^*$ is identified with the set of supporting hyperplanes of $\Omega_1$. If $H$ is a supporting hyperplane for $\Omega_1$, then let $\beta(H)$ be the positive number such that $\beta(H)H$ is a supporting hyperplane for $\Omega_2$. $d_R(\Omega_1^*,\Omega_2^*)$ is the supremum of $|\log(\beta)|$. We have shown that $d_R(\Omega_1,\Omega_2)\leq d_R(\Omega_1^*,\Omega_2^*)$. The reverse inequality follows from the fact that taking dual convex sets is an involution.
\end{proof}

\subsection{End of proof of theorem C}
We have an upper bound on $F^{DS}_\mu$ which is close to $F^\Delta_\mu$
\[F^{DS}_\mu\: \leq \:F^{DS,d}_\mu\:\underset{\epsilon_{d}(r)}{\approx} \: F^{\Delta,d}_\mu\: \underset{\epsilon_{\Delta,d}}{\approx} \: F^\Delta_\mu\]
and a lower bound on $F^{DS}_\mu$ which is also close to $F^\Delta_\mu$
\[F^{DS}_\mu = (F_{-\mu}^{DS})^{*_{2g}} \:\geq \:
(F^{DS,d}_{-\mu})^{*_{2g}}\:\underset{\epsilon_{d}(r)}{\approx} \: 
(F^{\Delta,d }_{-\mu})^{*_{2g}}\:\underset{\epsilon_{C^0}(r)}{\approx} \: 
(F^{\Delta,d }_{-\mu})^{*_{2h}}\:\underset{\epsilon_{\Delta,d}}{\approx} \: 
(F^\Delta_{-\mu})^{*_{2h}} = F^\Delta_\mu\].
Each `$\approx$' symbol means there is a bound, named in the subscript, on the absolute value of the log of the ratio.  Combining upper and lower bounds gives bound on the distance between $F^{DS}_\mu$ and $F^{\Delta}_\mu$:

\[|\log \frac{F^{DS}}{F^{\Delta}}| \leq \epsilon_{\Delta,d} + \epsilon_d(r) + \epsilon_{C^0}(r)\]

We have that $\epsilon_{\Delta,d}$ limits to zero as $d$ goes to infinity, and for each fixed $d$, $\epsilon_d(r)$ goes to zero as $r$ goes to infinity. We just need to choose a function $d(r)$ which limits to infinity, but slowly enough such that $\epsilon_{d(r)}(r)$ goes to zero. Setting $\epsilon(r) = \epsilon_{\Delta,d(r)} + \epsilon_{d(r)}(r) + \epsilon_{C^0}(r)$ finishes the proof.

\section{Continuity of length functions of Finsler metrics}
Depending on context, there are minor variations on what one means by a Finsler metric. So far, we have discussed specific Finsler metrics, so we haven't needed to specify a class of Finsler metrics to work with, but now we need to prove a general fact about Finsler geometry, so we will specify exactly what we mean by Finsler metric.
\begin{definition}
A Finsler metric on a differentiable manifold $M$ is a continuous function $F:TM\to \RR_{\geq 0 }$ satisfying 
\begin{itemize}
\item $F_x(\lambda v) = \lambda F_x(v)$ for all $\lambda \in \RR_{\geq 0 }$
\item $F_x(v+v')\leq F_x(v) + F_x(v')$
\item $F_x(v) = 0$ iff $v=0$
\end{itemize}
\end{definition}
Importantly, we don't assume $F_x(v) = F_x(-v)$, and we don't assume that $F$ is differentiable. If $F$ satisfies the first two conditions, but not the third, we call it a degenerate Finsler metric, and we call the subset of $M$ where $F$ is degenerate the degeneracy locus. Let $\mathcal{F}(M)$ denote the set of Finsler metrics on $M$, and let $\mathcal{F}^{\textrm{fd}}(M)$ denote the set of Finsler metrics with finite degeneracy locus. These definitions are useful to us because if $S$ is a convex projective surface, $F^{DS}$ is always in $\mathcal{F}(M)$, and $F^{\Delta}$ is always in $\mathcal{F}^{\textrm{fd}}(S)$.

If $L$ is a free homotopy class of loop in $M$, and $F\in \mathcal{F}(M)$, then we denote by $F(L)$ the infimum of the lengths of paths in $L$ with respect to $F$.
\[F(L) := \inf_{\gamma \in L}\int_{\gamma}F\]
We give $\mathcal{F}(M)$ the topology in which $F_i$ converge to $F$ if $F_i/F$ converge uniformly to $1$ as functions on $TM- \underline{0}$. It is easy to see that $F(\gamma)$ is a continuous function of $F$ with respect to this topology. We need to be a little more thoughtful in choosing a topology for $\mathcal{F}^{\textrm{fd}}(M)$

\begin{definition}
We endow $\mathcal{F}^{\textrm{fd}}(M)$ with the topology in which $F_i$ converges to a degenerate Finsler metric $F$ with degeneracy locus $X$ if for every open neighborhood $U$ of $X$, $F_i/F$ converges uniformly to $1$ in $M\backslash U$.
\end{definition}

\begin{theorem}
If $F_i\in \mathcal{F}^{\textrm{fd}}(M)$ is a sequence converging to a Finsler metric $F$ with finite degeneracy locus $X$, and $Y$ is a free homotopy class of loop in $M$, then
\[\lim_{i\to\infty} F_i(Y) = F(Y)\]
\end{theorem}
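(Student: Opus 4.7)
The strategy is to prove the two one-sided inequalities separately.

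For the upper bound, fix $\eta > 0$ and pick a smooth $\gamma \in Y$ with $F(\gamma) < F(Y) + \eta$. Since $X$ is finite and $\dim M \geq 2$, a small $C^1$ perturbation moves $\gamma$ off $X$ while keeping $F(\gamma) < F(Y) + 2\eta$; the image of the perturbed $\gamma$ is a compact subset of $M\setminus X$, hence sits in $M\setminus U$ for some open neighborhood $U$ of $X$. Uniform convergence of $F_i/F$ to $1$ on $M\setminus U$ integrates along $\gamma$ to give $F_i(\gamma)\to F(\gamma)$, so $\limsup_i F_i(Y) \leq \limsup_i F_i(\gamma) = F(\gamma) < F(Y) + 2\eta$. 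Letting $\eta\to 0$ proves $\limsup_i F_i(Y) \leq F(Y)$.

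For the lower bound, take near-minimizers $\gamma_i \in Y$ with $F_i(\gamma_i) < F_i(Y) + 1/i$. The plan is to modify each $\gamma_i$ to a loop $\gamma_i' \in Y$ avoiding a small neighborhood of $X$, with comparable $F$-length. The modification uses two scales $\rho' \ll \rho > 0$: letting $U_r$ denote a union of $r$-disks about the points of $X$ in some auxiliary smooth Riemannian metric, each connected component of $\gamma_i\cap U_{\rho'}$ is an arc inside one of those disks, hence nullhomotopic rel endpoints, so I would replace each such arc by the shorter of the two arcs on $\partial U_{\rho'}$ joining the same endpoints. The result $\gamma_i'$ lies in $M\setminus U_{\rho'}$ and is freely homotopic to $\gamma_i$.

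To estimate $F(\gamma_i')$: on $M\setminus U_{\rho'}$ the uniform convergence gives $F(\gamma_i|_{M\setminus U_{\rho'}}) \leq (1 + o_i(1))\,F_i(\gamma_i)$. Each modification arc has $F$-length at most $\pi\rho'\, C(\rho')$, where $C(\rho') = \sup\{F(x,v) : x\in \partial U_{\rho'},\,|v|_g = 1\}$ tends to $0$ as $\rho'\to 0$ by continuity of $F$ and $F|_X = 0$. The number $N$ of modification arcs is controlled by an annulus-crossing argument: each excursion into $U_{\rho'}$ must traverse the annulus $U_\rho\setminus U_{\rho'}$ twice, contributing at least $2(\rho - \rho')\,\delta(\rho')(1 - o_i(1))$ to $F_i(\gamma_i)$, where $\delta(\rho') = \inf\{F(x,v) : x\in U_\rho\setminus U_{\rho'},\, |v|_g = 1\}$; hence $N \leq F_i(\gamma_i)/(2(\rho - \rho')\,\delta(\rho'))$. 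Combining yields
\[
F(Y) \;\leq\; F(\gamma_i') \;\leq\; \left(1 + o_i(1) + \frac{\pi\,\rho'\, C(\rho')}{2(\rho - \rho')\,\delta(\rho')}\right) F_i(\gamma_i),
\]
and taking $\liminf_i$ followed by $\rho'\to 0$ with $\rho$ fixed would give $F(Y)\leq \liminf_i F_i(Y)$.

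The main obstacle is controlling the ratio $C(\rho')/\delta(\rho')$, which measures the anisotropy of the unit balls of $F$ at points close to $X$, so that the error term in the displayed inequality vanishes as $\rho' \to 0$. For the triangular Finsler metrics $F^\Delta_\mu$ central to this paper the unit ball is a fixed affine type of triangle, so this ratio is bounded (by at most $2$); then fixing $\rho$ and letting $\rho' \to 0$ kills the error. For a completely arbitrary $F \in \mathcal{F}^{\mathrm{fd}}(M)$ the anisotropy could in principle blow up near $X$, and one would instead choose the replacement along $\partial U_{\rho'}$ to be an $F$-minimizing arc joining the two endpoints (whose $F$-length is controlled by $\delta(\rho')$ rather than $C(\rho')$); the remainder of the argument goes through verbatim.
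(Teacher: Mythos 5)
Your upper-bound argument ($\limsup_i F_i(Y)\leq F(Y)$) is the same as the paper's and is fine. The lower bound follows the paper's strategy in outline --- do surgery on near-minimizers $\gamma_i$ near $X$, count the surgeries, bound the cost of each --- but there is a genuine gap in the implementation, and you have correctly located it yourself: the error term is $N$ times the cost of one replacement arc, roughly $F_i(\gamma_i)\,\rho'\,C(\rho')/\bigl(2(\rho-\rho')\,\delta(\rho')\bigr)$, and nothing forces this to vanish for a general $F\in\mathcal{F}^{\textrm{fd}}(M)$. Your proposed repair does not close the gap: the $F$-length of the $F$-shortest arc of $\partial U_{\rho'}$ joining two boundary points is \emph{not} controlled by $\delta(\rho')$. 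For instance, near a degenerate point take $F=|dx|+e^{-1/r}|dy|$ in a chart (continuous, convex, degenerate only at the origin, so in $\mathcal{F}^{\textrm{fd}}$); every arc of $\partial U_{\rho'}$ joining $(\rho',0)$ to $(-\rho',0)$ has $F$-length at least $2\rho'$, while $\delta(\rho')\approx e^{-1/\rho'}$, so your bound on $N$ times the unavoidable cost per arc is of order $\rho' e^{1/\rho'}\to\infty$ no matter how the arcs are chosen. (A secondary flaw: consecutive components of $\gamma_i\cap U_{\rho'}$ need not be separated by a full crossing of the annulus $U_\rho\setminus U_{\rho'}$ --- the path can re-enter $U_{\rho'}$ without reaching $\partial U_\rho$ --- so $N$, the number of components, is not bounded by your annulus-crossing estimate and can even be infinite; one must do a single surgery per ball, from first entry to last exit, as the paper does.)

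The fix the paper uses, and which you should adopt, is to measure the balls in the metric $F$ itself rather than in an auxiliary Riemannian metric: take disjoint closed $F$-balls of radius $\delta$ about the points of $X$, and replace each excursion of a lift $\tilde\gamma_i$ (from first entry to last exit of a ball) by a path running into the center $p$ and back out. By definition of the ball each surgery then costs at most $2\delta$ in $F$-length, \emph{independently} of the anisotropy of $F$ near $X$, and the number of distinct balls the lift can visit is at most $K F_i(\gamma_i)/d_{\min}$, where $d_{\min}$ is the $F$-distance between the balls and stays bounded below as $\delta\to 0$. The total error $2N\delta$ then vanishes with no hypothesis on $C(\rho')/\delta(\rho')$. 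Your remark that bounded anisotropy suffices for $F^{\Delta}_\mu$ is correct, but the theorem is stated (and should be proved) for arbitrary $F\in\mathcal{F}^{\textrm{fd}}(M)$.
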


\begin{proof}
First we show 
\[\lim_{i\to \infty}F_i(Y) \leq F(Y)\]
Let $\epsilon>0$. We can choose $\gamma\in Y$ which doesn't hit $X$ such that $F(\gamma) \leq F(Y)+\epsilon$. This follows from continuity of $F$, because only very small deformations of a path are necessary to avoid $X$, and these will change the path's length by a small amount. Because $\gamma$ avoids the degeneracy locus, we get the following:
\[\lim_{i\to \infty} F_i(Y)\leq \lim_{i\to \infty} F_i(\gamma) = F(\gamma) \leq F(Y)+\epsilon\]
Since this holds for all $\epsilon$, it follows that $\lim F_i(\gamma) \leq F(Y)$.

Now we show the reverse inequality. 
\[F(Y) \leq \lim_{i\to \infty} F_i(Y)\]
For every $i\in \NN$, let $\gamma_i\in Y$ be a path which nearly realizes minimal length in the metric $F_i$. 
\[F_i(\gamma_i) \leq F_i(Y) + \epsilon\]
Choose $\delta>0$ such that closed $\delta$-balls around points in $X$, for the metric $F$ are disjoint. Let $d_{\min}$ be shortest distance between two balls.  If a path in the universal cover $\gamma:[0,1]\to \tilde{M}$ hits $N$ different balls, then $F(\gamma)\geq N d_{\min}$. Let $K>1$. For sufficiently large $i$, we have  $K^{-1} < F_i/F < K$ on the complement of the balls. We get a bound on the number of balls a path can visit in terms of its $F_i$ length: \[N \leq K F_i(\gamma)/d_{\min}\] For sufficiently large $i$, $F_i(Y)\leq F(Y) + \epsilon$, so $F_i(\gamma)\leq F(Y) + 2\epsilon$. Let $\tilde\gamma_i:[0,1]\to \tilde{M}$ be a lift of $\gamma_i$ to the universal cover. For simplicity, choose $\tilde\gamma$ so that that $\tilde\gamma(0)$ is not in the $\delta$ neighborhood of $X$. Putting things together, we get a bound on the number of balls $\tilde\gamma_i$ can visit which holds for all sufficiently large $i$.
\[N_i\leq K (F(Y)+2\epsilon)/d_{\min} \]

We now define a path $\gamma_i'$ whose $F$ length we can estimate. Let $B_1$ be the first ball that $\gamma_i$ touches, and let $t_0,t_1\in [0,1]$ be the first and last points which are sent by $\tilde\gamma_i$ to $\overline{B_1}$. Replace $\tilde\gamma_i|_{[t_0,t_1]}$ with a path that goes straight to $p$, and straight out. Now apply the same procedure to the rest of the path $\tilde\gamma_i|_{[t_1,1]}$. Proceed inductively, and call the final result $\tilde\gamma_i'$. We have an upper bound on the $F$ length of $\tilde\gamma_i'$, thus an upper bound on $F(Y)$.
\[F(Y) \leq F(\gamma') \leq K F_i(\gamma_i) + 2N_i\delta \leq K (F_i(Y)+\epsilon) + 2N_i\delta\]
This bound holds for sufficiently large $i$ for any choices of $K$ and $\delta$, and $N_i$ eventually has a uniform bound, so we have $F(Y) \leq \lim_{i\to \infty} F_i(Y)+\epsilon$. This holds for any $\epsilon$, so we get the desired inequality.

\end{proof}

\begin{center}
\includegraphics[width=14cm]{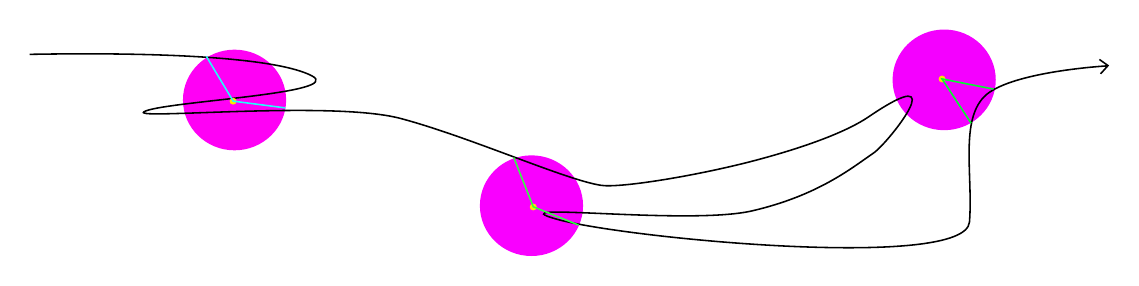}
\end{center}

\section{Application to a triangle reflection group}
The broad aspiration of this project is to understand what happens when we have a sequence of Hitchin representations of a surface group into $\operatorname{SL}_3\RR$ going to infinity. Replacing the surface group with a triangle reflection group is a great way to probe this question, because the Hitchin component is diffeomorphic to $\RR$, so there are only two ways to go to infinity. Before \cite{loftin_limits_2022} was published, and before we knew of Loftin's work \cite{loftin_flat_2007}, it was triangle group computations which convinced us that theorem \ref{evlimit} should hold.

Consider a triangle group.
\[\Gamma_{pqr} := \langle a,b,c \:|\: a^2 = b^2 =  c^2 = (ab)^p = (bc)^q = (ca)^r = 1 \rangle \]
For $1/p + 1/q + 1/r < 1$, there is a unique conjugacy class of homomorphism $\rho_0:\Gamma_{pqr}\to SO(2,1)$ giving a proper discontinuous action on the hyperbolic plane $\HH^2$, and the quotient $S=\Gamma\backslash\HH^2$ is an orbifold. Let $\operatorname{Conv}(\Gamma)$ denote the component of $\operatorname{Rep}(\Gamma, \operatorname{SL}_3\RR)$ containing $\rho_0$.  

We can easily make an explicit algebraic parametrization of $\operatorname{Conv}(\Gamma_{pqr})$. Let $v_1,v_2,v_3$ and $\alpha_1,\alpha_2,\alpha_3$ be bases of $\RR^3$ and $(\RR^3)^*$ with the following matrix of pairings.
\[\alpha_i(v_j) = \begin{bmatrix}
 2 & -2\cos(\frac{\pi}{p})t & -2\cos(\frac{\pi}{r}) \\
 -2\cos(\frac{\pi}{p})t^{-1} & 2 & -2\cos(\frac{\pi}{q}) \\
 -2\cos(\frac{\pi}{r}) & -2\cos(\frac{\pi}{q}) & 2 \\
 \end{bmatrix}\]
We define $\rho_t:\Gamma_{pqr}\to \operatorname{SL}_3\RR$ to send the generators $a$, $b$, $c$ to the three reflections $I - v_i\otimes \alpha_i$. The parameter $t$ is the square root of the triple ratio of the three reflections. It has the following expression:
\[t^2=\frac{\alpha_1(v_2)\alpha_2(v_3)\alpha_3(v_1)}{\alpha_1(v_3)\alpha_2(v_1)\alpha_3(v_2)}\]
which makes it clear that it is an invariant of the three reflections. By \cite{lee_anosov_2021}, we know that $t$ gives a global parametrization of $\operatorname{Conv}(\Gamma_{pqr})$ by $\RR_+$. 

A naive way to try to understand the representation $\rho_t$ is to plot the eigenvalues of $\rho_t(g)$ for a bunch of $g\in \Gamma$. Let $\phi:\operatorname{SL}_3\RR\to \RR^2$ be the function $(\log|\lambda_1|,-\log|\lambda_3|)$ where $\lambda_1$ is the top eigenvalue, and $\lambda_3$ is the bottom eigenvalue. $\phi$ is called the Jordan projection for $\operatorname{SL}_3\RR$ and lands in the cone spanned by $(1,2)$ and $(2,1)$. In general, the Jordan projection of an element of a semi-simple lie group is the Weyl-chamber valued translation length of the the element acting on the associated symmetric space, so Jordan projections are a generalization of hyperbolic translation length.

As an example, we plot here $\phi(g)$ for orientation preserving elements $g\in\Gamma_{444}$ of length at most 24, for triple ratio $t^2$ set to $1,10,1000$, and $10^{12}$.

\includegraphics[width=8cm]{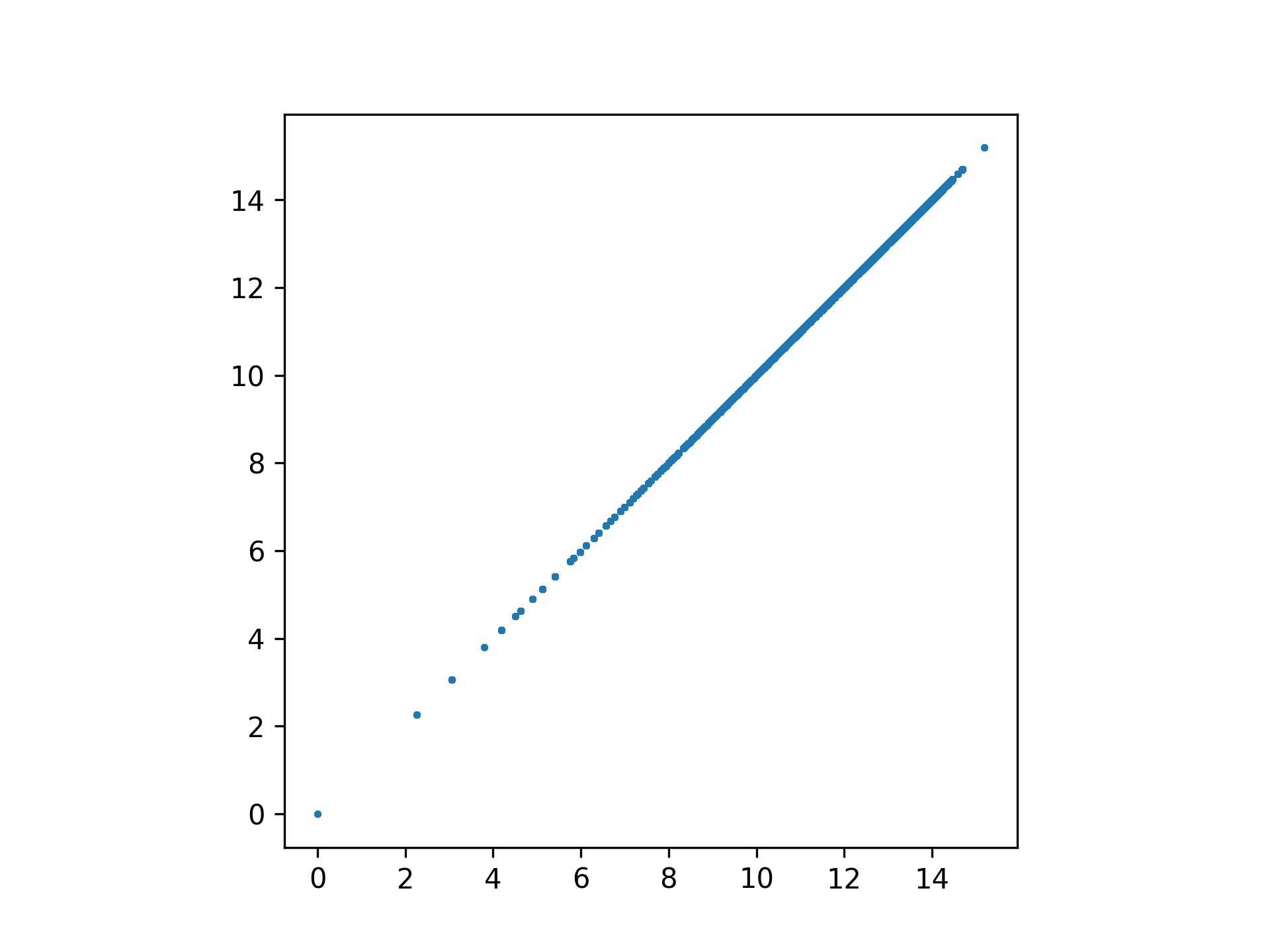}
\includegraphics[width=8cm]{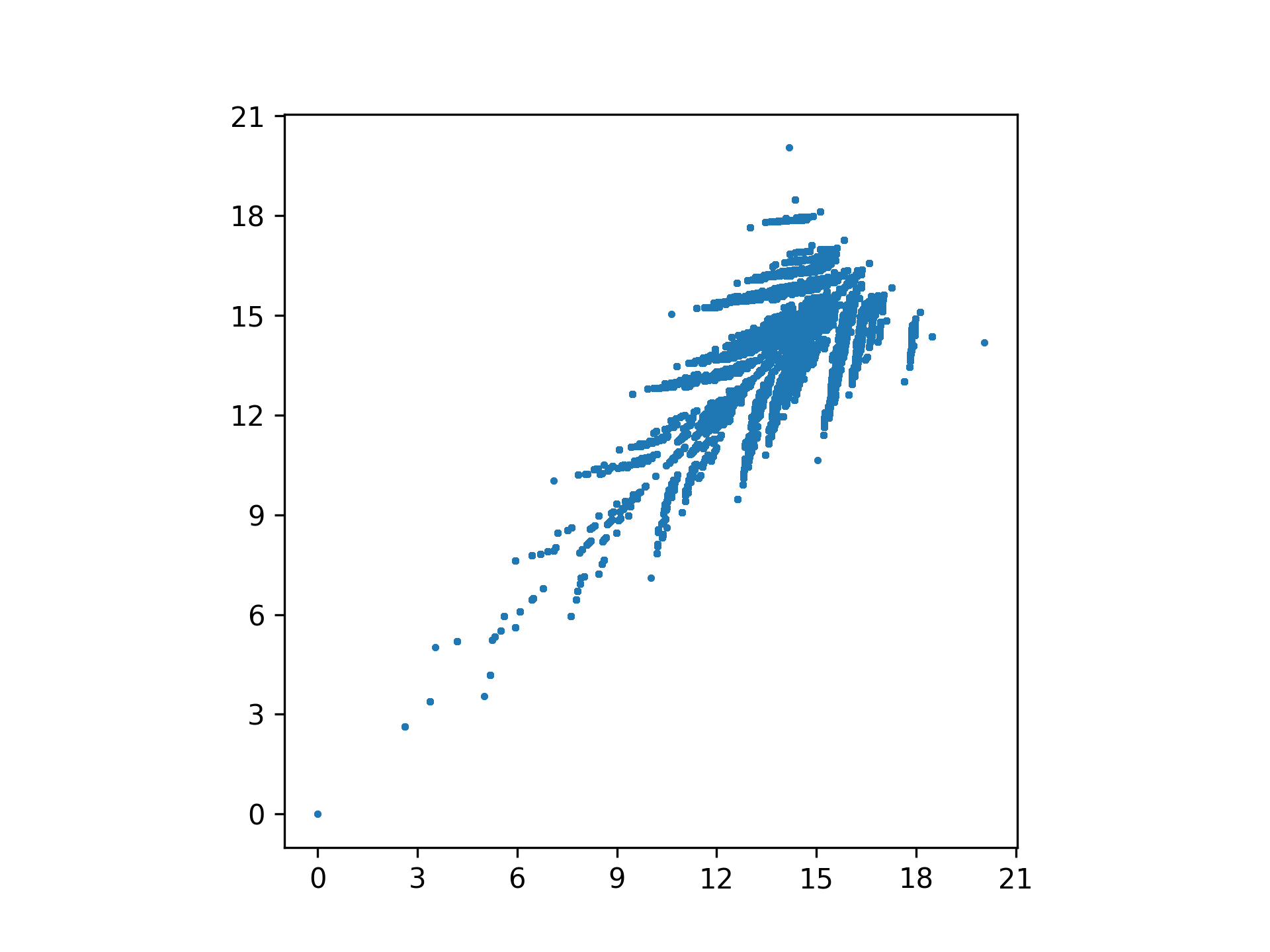}

\includegraphics[width=8cm]{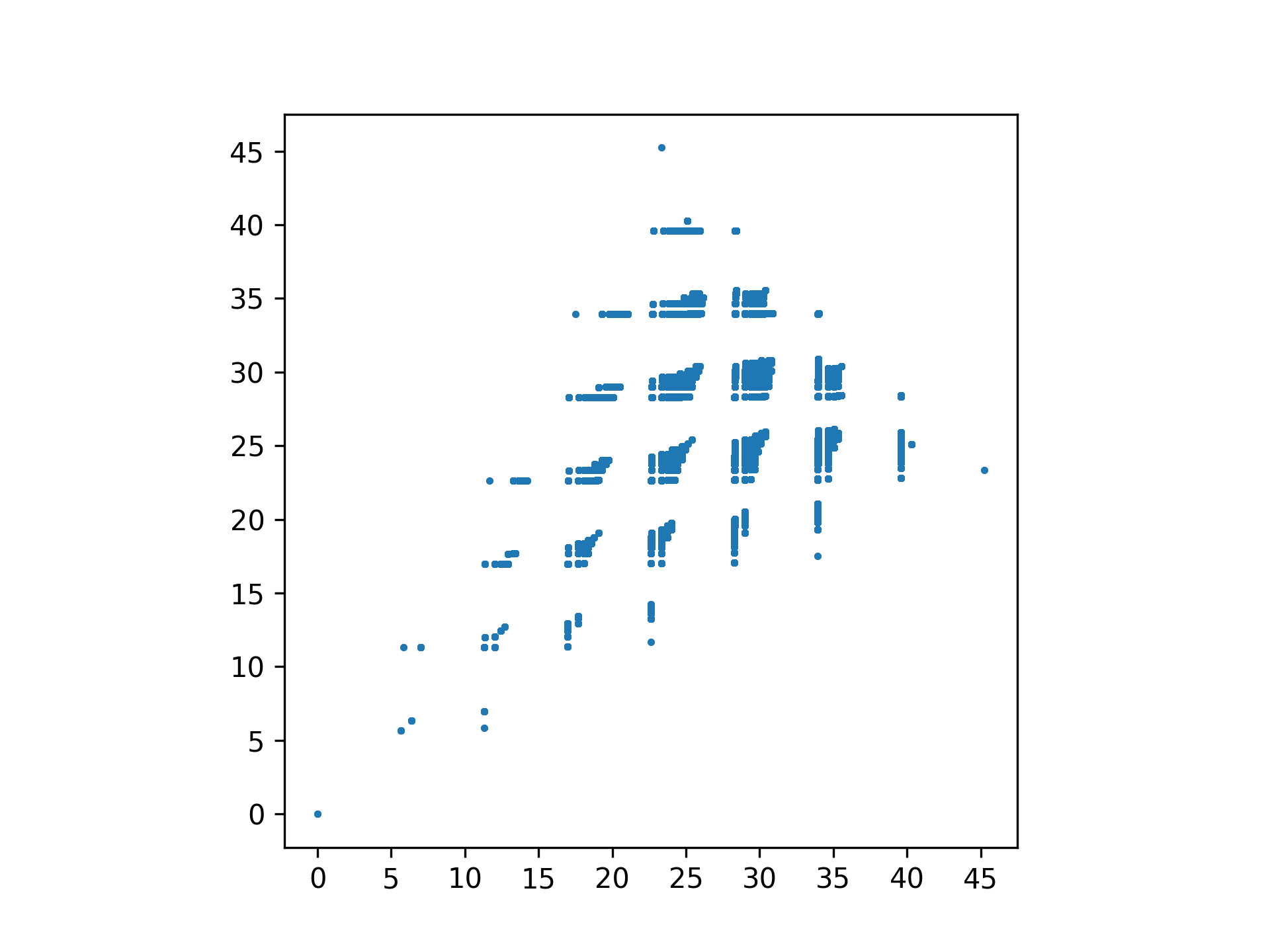}
\includegraphics[width=8cm]{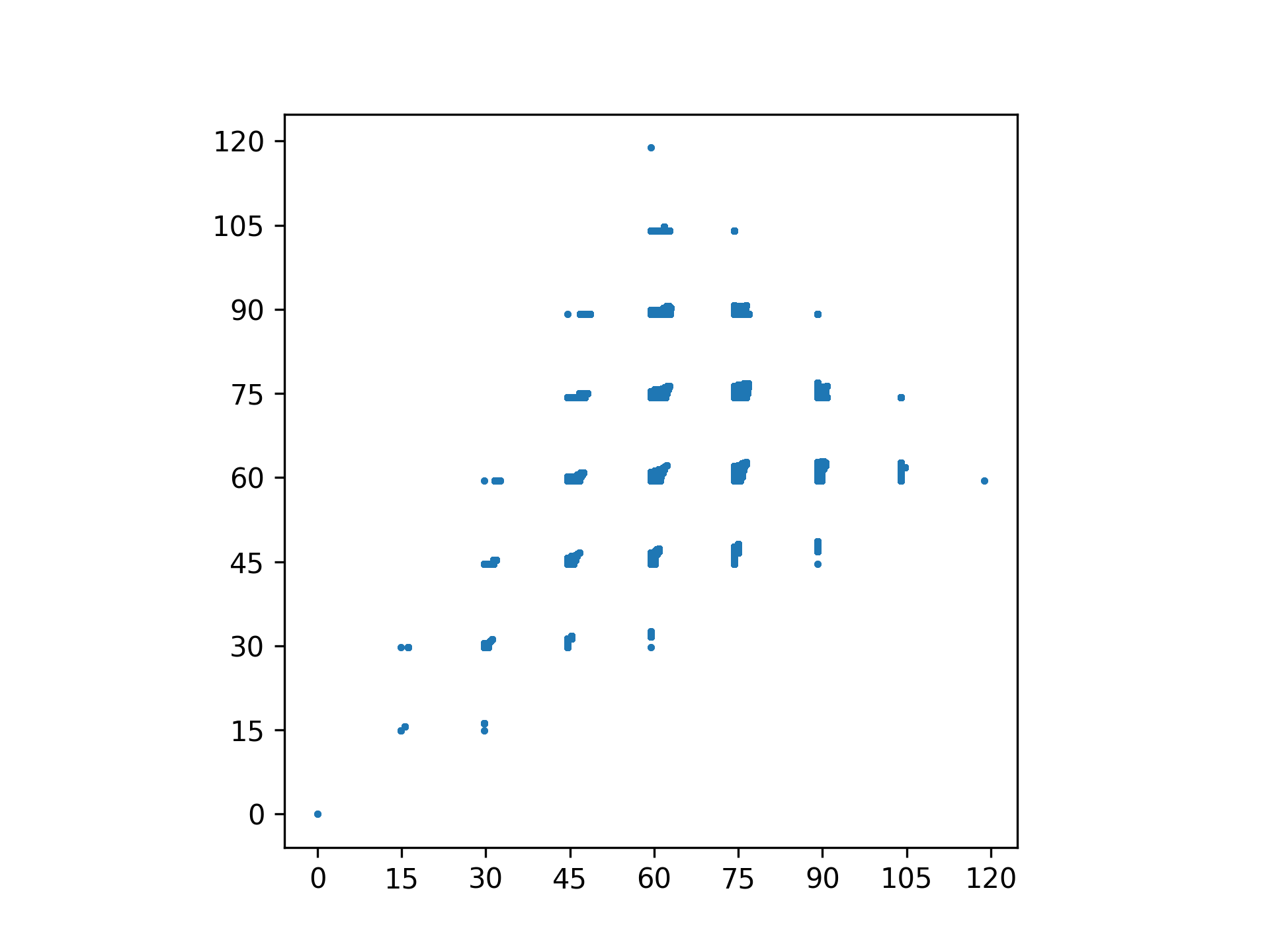}

We see that the Jordan projections increasingly tend to lie on an integral lattice. Theorem \ref{evlimit} gives a way to compute which lattice point each group element converges to. 

From an algebraic perspective, it not so surprising that the Jordan projections collect onto a lattice. Note that when $\lambda_1(g)$ is big, $\log|\lambda_1(g)|$ is approximately $\log|tr(g)|$, and $-\log|\lambda_3(g)|$ is approximately $\log|tr(g^{-1})|$. Trace functions $tr(\rho_t(\gamma))$ for $\gamma\in \Gamma_{pqr}$ are Laurent polynomials in $t$. This means that when $t$ is big, \[\phi(g) \approx \log(t)(d_1,d_2)\] where $d_1$ and $d_2$ are the highest powers of $t$ in $tr(\rho_t(\gamma))$ and $tr(\rho_t(\gamma^{-1}))$ respectively. From this perspective, theorem \ref{evlimit} tells us the ratios between the highest power of $t$ in $tr(\rho_t(\gamma))$.

The Labourie-Loftin parametrization can be modified to apply to our reflection orbifold $S$, by defining a holomorphic cubic differential on $S$ to be a holomorphic cubic differential $\tilde{\mu}$ on a universal cover which is preserved by orientation preserving deck transformations, and complex conjugated by orientation reversing deck transformations. The correspondence for quotient orbifolds (such as our triangle reflection orbifold) follows from the correspondence on a smooth, compact covering space, and the general fact that when solutions to PDE's are unique, they have to be invariant under the symmetry group of the input data. The correspondence between Higgs bundles and Hitchin components for orbifolds was worked out in generality in \cite{alessandrini_hitchin_2020}.  Note that this definition of cubic differential for orbifolds forces the fixed loci of reflections to be real trajectories of $\tilde{\mu}$. 

As predicted by the Labourie-Loftin correspondence, the space of cubic differentials on $S$ is $1$ dimensional. We now construct a nonzero element of this space. Consider a euclidian equilateral triangle $T\subset \CC$, whose sides are unit length, and such that the restriction of $dz^3$ to each side is real. Let $\bar{T}$ denote $T$ with the conjugate cubic differential $d\bar{z}^3$. We can glue copies of $T$ and $\bar{T}$ appropriately to get a Riemann surface with cubic differential which has an action of $\Gamma_{pqr}$. Call this cubic differential $\mu$. 

If we color copies of $T$ grey, and color copies of $\bar{T}$ white, and conformally map the resulting surface to the disk, we get the standard picture of the $pqr$ triangulation of the hyperbolic plane. Below is the picture for $p=q=r=4$.

\begin{center}
\includegraphics[width=7cm]{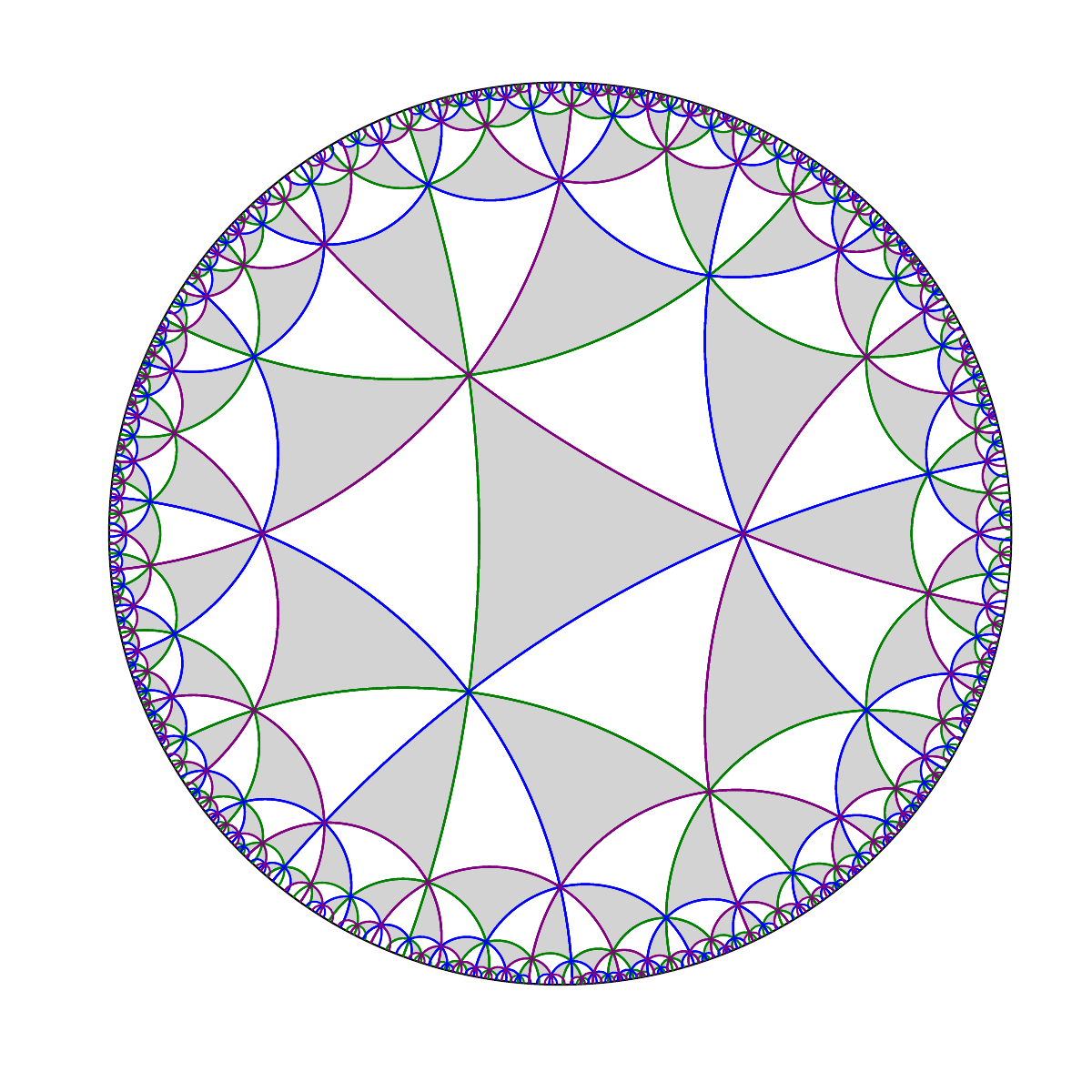}
\end{center}

We now illustrate how to apply theorem \ref{evlimit} with an example. The word 
\[w=cbcacbcacbcacbacbabcabab\]
was chosen arbitrarily from the words forming the cluster around the lattice point $(6,5)$ in the $t=10^{12}$ Jordan projection picture. We can compute the $F_\mu^\Delta$ translation lengths of $w$, and $w^{-1}$ as follows. First we construct a curve in the universal cover $\tilde{S}$ which is preserved by $w$ by concatinating straight line segemnts connecting centers of triangles, which cross reflection loci prescribed by the letters of $w$. Then pull this curve curve tight in the metric $h_\mu$. Then push it onto the reflection locus, while making sure not to change its $F_\mu^\Delta$ length. $F^\Delta_\mu$ assigns length $2$ to edges going counter clockwise around grey triangles, and length $1$ to edges going clockwise around grey triangles (and vice versa for white triangles) so we can just add up these numbers to compute translation length once we have a geodesic representative in the reflection locus.

\begin{center}
\includegraphics[trim={0 1cm 0 2cm},clip,width=13cm]{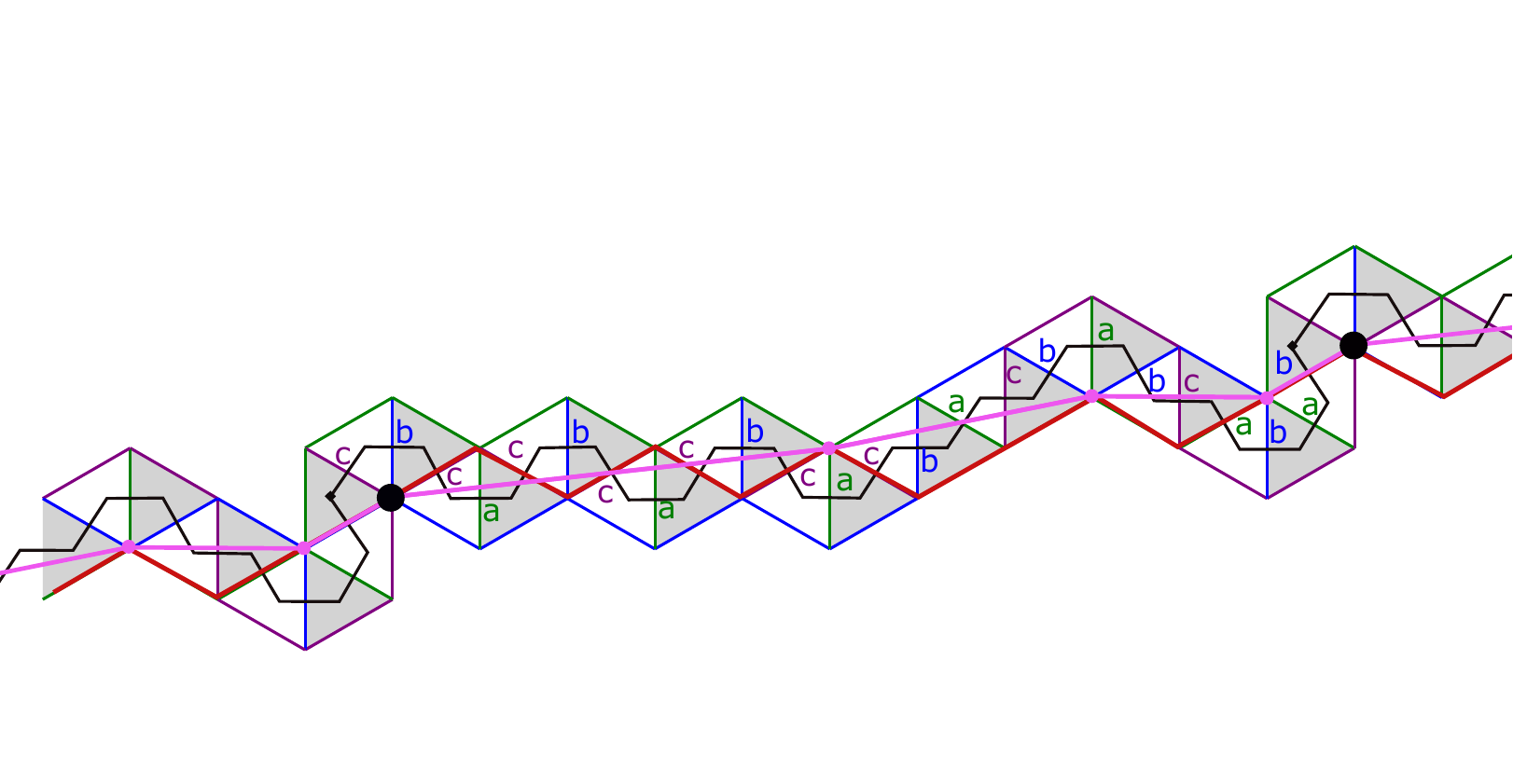}
\end{center}

Here, the thin black line represents the easy to construct $w$ invariant curve, the pink line represents the geodesic for the singular flat metric $h_\mu$, which is also a geodesic for $F_\mu^\Delta$, and the red line represents a geodesic for $F_\mu^\Delta$ which lies in the reflection locus. We see that the translation length of $w$ is $18$, and the translation length $w^{-1}$ is $15$. Theorem \ref{evlimit} thus predicts
\[\lim_{t\to\infty} \frac{\log(\lambda_1(\rho_t(w)))}{\log(\lambda_1(\rho_t(w^{-1})))} = \frac{6}{5}\]
which is what we observed in the Jordan projection picture. We can verify this limit rigorously by using computer algebra software to directly compute that the highest powers of $t$ in $tr(\rho_t(w))$ and $tr(\rho_t(w^{-1}))$ are $6$, and $5$ respectively. The reader can easily try these computations for other elements of triangle reflection groups.

\bibliographystyle{plain}
\bibliography{bib.bib}

\end{document}